\documentclass[10pt]{amsart} 
\usepackage[english]{babel}
\usepackage{amsmath}
\usepackage{amssymb}
\usepackage{amsthm}
\usepackage{amscd}
\usepackage{xcolor}
\usepackage{fancyhdr}
\usepackage{hyperref}
\usepackage{cleveref}
\usepackage[bottom=3cm,top=3.5cm, left=3cm, right=3cm]{geometry}
\usepackage[normalem]{ulem}
\usepackage{amsfonts}
\usepackage{amsbsy}
\usepackage[utf8]{inputenc}
\usepackage[shortlabels]{enumitem}
\usepackage{cancel}
\usepackage{xspace}
\usepackage{url}
\usepackage{wrapfig}
\usepackage{enumitem}
\usepackage{multicol}
\usepackage{moreenum}
\usepackage{xcolor}
\usepackage{subcaption}
\usepackage{tikz}
\usepackage{array,multirow}
\usepackage[all,cmtip]{xy}
\definecolor{LinkColor}{rgb}{0,0,0} 

\newtheorem*{Main Theorem} {Main Theorem}
\newtheorem*{Theorem A} {Theorem A}
\newtheorem*{Theorem B} {Theorem B}
\newtheorem*{Theorem C} {Theorem C}
\newtheorem*{Conjecture A}{Conjecture A}
\newtheorem*{Conjecture B}{Conjecture B}

\newtheorem*{Question B}   {Question B}

\newtheorem {theorem}    {Theorem}[section]
\newtheorem {lemma}      [theorem]    {Lemma}

\newtheorem {proposition}[theorem]    {Proposition}

\newtheorem {question}   {Question}

\newtheorem {fact}       {Fact}

\newtheorem{maintheorem}{Theorem}

\theoremstyle{definition}

\numberwithin{equation}{section}

\newcounter{DM@bibnum}

\DeclareMathOperator{\GL}{GL}

\DeclareMathOperator{\Aut}{Aut}

\DeclareMathOperator{\Irr}{Irr}

\newcommand{\F}{\mathbb{F}}
\newcommand{\Z}{\textup{Z}}

\newcommand{\C}{\textup{C}}
\newcommand{\N}{\textup{N}}
\newcommand{\NN}{\mathbb{N}}

\newcommand{\MM}{\mathbb{M}}
\newcommand{\GK}{\Gamma_{\textup{GK}}}

\newcommand{\Core}{\operatorname{Core}}
\newcommand{\GEN}[1]{\left\langle #1 \right\rangle}

\newcommand{\qand}{\quad \text{and} \quad}
\newcommand{\cut}{\textsf{cut}\xspace}

\DeclareMathOperator{\Ind}{{\rm Ind}}      

\title{Gruenberg-Kegel graphs with four edges \\ of finite solvable cut groups  }

\author{Sara C. Debón}
\address{Departamento de Matem\'aticas, Universidad de Murcia, Spain}
\email{sara.cebelland@um.es}

\author{Diego García-Lucas}
\address{Departamento de Matemáticas, Universidade de Santiago de Compostela, Spain}
\email{diego.garcia.lucas@usc.es}

\author{\'{A}ngel del R\'{\i}o} 
\address{Departamento de Matem\'aticas, Universidad de Murcia, Spain}
\email{adelrio@um.es}

\thanks{Partially supported by Grant PID2024-155576NB-I00 funded by MICIU/AEI/ 10.13039/501100011033 /FEDER, UE, and by Grant 22004/PI/22 funded by Fundación Séneca of Región de Murcia, Spain.}

\begin{document}
	\maketitle

\begin{abstract}
We prove that there is a unique graph with four edges which is the Gruenberg-Kegel graph of a solvable \cut group. This contributes to the classification of the Gruenberg-Kegel graphs of solvable \cut groups initiated in \cite{BKMdR}.
\end{abstract}

The \emph{Gruenberg-Kegel graph} (also known as prime graph) of a group $G$ is the one having as vertices the primes ocurring as the order of some element of $G$ and edges $p-q$ whenever $G$ has an element of order $pq$. The Gruenberg-Kegel graph of $G$ is denoted $\GK(G)$ and in the remainder of the paper we call it \emph{GK-graph}, for brevity. This graph was introduced by Karl Gruenberg and Otto Kegel who proved that if $G$ is solvable then $\GK(G)$ is disconnected if and only if $G$ is Frobenius or 2-Frobenius (see \cite{Williams}, which quotes an unpublished manuscript of Gruenberg and Kegel). Since the publication of this paper, the GK-graph has received a lot of attention and in particular, the properties of the GK-graph of a group in a class $\mathcal{C}$ of groups,  or the realizability of a graph as the GK-graph of an object of $\mathcal{C}$, have been object of study  \cite{AKK10,BurnessCovato15,CameronMaslova22,GKKM14,GrechkoseevaVasilev22,GKLNS,MP16,ZM13}. 

A group is said to be \cut if the \textbf{c}entral \textbf{u}nits of its integral group ring are \textbf{t}rivial. 
It is well known that a finite group $G$ is \cut if and only if it is inverse semi-rational, i.e. if and only if for every $g\in G$, all the generators of the group generated by $g$ are conjugated to $g$ or $g^{-1}$ (see \cite{Ritter1990} or \cite[Corollary~7.1.15]{JespersdelRioGRG1}). 
In particular every rational group is \cut. 
The study of rational groups is a classical topic in representation theory \cite{FeitSeitz1989,Gow,Hegedus2005,Kletzing1984,Thompson2008}. 
Besides the connections of \cut groups with units of integral group rings, they have a role in representation theory because they can also be characterized as the groups for which the field of character values of every irreducible character is an imaginary quadratic extension of the rationals. So the interest for \cut groups and other generalizations of rational groups have received some attention during the last decades  \cite{Bachle2018,BCJM,BakshiMaheshwaryPassi17,ChillagDolfi,Mah18,Mor22,Navarro09,delRioVergani,Tent12,Tre19}.

The classification of the GK-graphs of solvable rational groups and of solvable \cut groups was almost achieved in \cite{BKMdR}. 
  For rational solvable groups the classification was completed in \cite{DebonGLucasdelRio24,DebonGLucasdelRio25}.

The aim of this paper is to contribute to the classification of the GK-graphs of solvable \cut groups. 
In \cite{BKMdR} the graphs with at most three vertices which occur as the GK-graph of a solvable \cut group were classified. It was also proved that a graph with more than three vertices realizable as the GK-graph of a solvable \cut group is one of a list of seven graphs, of which three are indeed the GK-graph of a solvable \cut group. Of these three, one has four edges, another has five and the last one has six edges. However, the question was not settled for the remaining four graphs and the authors posed this in the form of the following question:

\begin{question}\label{QuestionE}
	\cite[Question E]{BKMdR} Which of the following graphs are realizable as the GK-graph of a solvable \cut group?
	\begin{center}
		\begin{tabular}{cccc}
			(s) \begin{tikzpicture}[local bounding box=bb,baseline=(bb.center)]
				\node[label=west:{$2$}] at (0,0.5) (2){};
				\node[label=east:{$3$}] at (0.5,0.5) (3){};
				\node[label=west:{$5$}] at (0,0) (5){};
				\node[label=east:{$7$}] at (0.5,0) (7){};
				\foreach \p in {2,3,5,7}{
					\draw[fill=black] (\p) circle (0.075cm);
				}
				\draw (2)--(3);
				\draw (2)--(7);
				\draw (3)--(5);
				\draw (3)--(7);
			\end{tikzpicture} \quad
			(t)\begin{tikzpicture}[local bounding box=bb,baseline=(bb.center)]
				\node[label=west:{$2$}] at (0,0.5) (2){};
				\node[label=east:{$3$}] at (0.5,0.5) (3){};
				\node[label=west:{$5$}] at (0,0) (5){};
				\node[label=east:{$7$}] at (0.5,0) (7){};
				\foreach \p in {2,3,5,7}{
					\draw[fill=black] (\p) circle (0.075cm);
				}
				\draw (2)--(3);
				\draw (2)--(5);
				\draw (2)--(7);
				\draw (3)--(5);
			\end{tikzpicture} \quad
			(u)\begin{tikzpicture}[local bounding box=bb,baseline=(bb.center)]
				\node[label=west:{$2$}] at (0,0.5) (2){};
				\node[label=east:{$3$}] at (0.5,0.5) (3){};
				\node[label=west:{$5$}] at (0,0) (5){};
				\node[label=east:{$7$}] at (0.5,0) (7){};
				\foreach \p in {2,3,5,7}{
					\draw[fill=black] (\p) circle (0.075cm);
				}
				\draw (2)--(3);
				\draw (2)--(7);
				\draw (3)--(5);
				\draw (3)--(7);
				\draw (5)--(7);
			\end{tikzpicture} \quad
			(v)\begin{tikzpicture}[local bounding box=bb,baseline=(bb.center)]
				\node[label=west:{$2$}] at (0,0.5) (2){};
				\node[label=east:{$3$}] at (0.5,0.5) (3){};
				\node[label=west:{$5$}] at (0,0) (5){};
				\node[label=east:{$7$}] at (0.5,0) (7){};
				\foreach \p in {2,3,5,7}{
					\draw[fill=black] (\p) circle (0.075cm);
				}
				\draw (2)--(3);
				\draw (2)--(5);
				\draw (2)--(7);
				\draw (3)--(5);
				\draw (3)--(7);
			\end{tikzpicture} 
		\end{tabular} 
	\end{center}
\end{question}

In this paper we answer the question for the first two, in the negative. 
Combining this with the main theorem of \cite{BKMdR} we obtain the following theorem:	

\begin{maintheorem}\label{Main}
	The only graph with four edges which is the GK-graph of a solvable \cut group is 
	\begin{center}
		\begin{tikzpicture}[local bounding box=bb,baseline=(bb.center)]
			\node[label=west:{$2$}] at (0,0.5) (2){};
			\node[label=east:{$3$}] at (0.5,0.5) (3){};
			\node[label=west:{$5$}] at (0,0) (5){};
			\node[label=east:{$7$}] at (0.5,0) (7){};
			\foreach \p in {2,3,5,7}{
				\draw[fill=black] (\p) circle (0.075cm);
			}
			\draw (2)--(3);
			\draw (2)--(7);
			\draw (3)--(5);
			\draw (5)--(7);
		\end{tikzpicture}  
	\end{center}
\end{maintheorem}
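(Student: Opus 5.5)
The plan is to reduce the statement to excluding graphs (s) and (t) of Question~\ref{QuestionE}. By the main theorem of \cite{BKMdR}, a graph with four vertices that is the GK-graph of a solvable \cut group is one of seven graphs on $\{2,3,5,7\}$. Exactly three of these are known to be realizable, and among those the only one with four edges is the graph displayed in Theorem~\ref{Main}. The graphs with at most three vertices are classified in \cite{BKMdR}, and none has four edges: the only candidate is the triangle plus a vertex, which has only three edges. So, among the seven candidates, the graphs with four edges are the displayed one, (s) and (t). Hence it suffices to prove that neither (s) nor (t) is $\GK(G)$ for a solvable \cut group $G$.

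For each of (s) and (t), I would argue by a minimal counterexample $G$. The tools are the following.
\begin{itemize}
\item Quotients of \cut groups are \cut, and $\GK(G/N)$ is a subgraph of $\GK(G)$.
\item Inverse semi-rationality forces $[\N_G(\GEN{g}):\C_G(g)]\geq \varphi(|g|)/2$. So for an element of order $7$, the group $\N_G(\GEN g)/\C_G(g)$ contains an element of order $3$. For an element of order $5$, it contains an element of order $4$ or at least one of order $2$ inducing inversion-free automorphisms; more precisely, its image in $\Aut(C_5)\cong C_4$ has index at most $2$.
\item Standard facts on Frobenius actions: a $q$-element acting fixed-point-freely on a $p$-group, with no edge $p-q$.
\end{itemize}
Minimality should let me assume that $G$ has a unique minimal normal subgroup $N$, an elementary abelian $p$-group with $\C_G(N)=N$ or at least $O_{p'}(G)=1$. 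Every prime of $\GK(G)$ other than $p$ is then witnessed by elements acting faithfully on $N$.

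For (t), the absent edges are $3-7$ and $5-7$. Take an element $x$ of order $7$. A $3$-element $y$ normalizing $\GEN x$ cannot centralize it, so $\GEN{x,y}$ contains a Frobenius group $C_7\rtimes C_3$. Working in a Hall $\{3,5,7\}$-subgroup, or in a suitable section of the Fitting series, I would show the following. Either the Sylow $7$-subgroup lies below a $3$-section on which it acts Frobeniusly; this forces the $3$-elements needed to rationalize $5$-elements to commute with $7$-elements, or creates $3-7$. Or the order-$7$ part is above, and then the $5$-section admits a fixed-point-free $7$-action, which together with the $2$-elements of $\N(\GEN{x})$ forces an element of order $35$ or $21$. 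The analogous argument for (s) uses the missing edges $2-5$ and $5-7$. A $5$-element is rationalized by a $2$-element acting fixed-point-freely on it. Combined with the $3$-elements rationalizing $7$ and the existing $2-7$ edge, I would aim to derive an element of order $10$ or $35$.

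The main obstacle is the case analysis of where the Sylow $5$- and $7$-subgroups sit relative to $F(G)$ and $F_2(G)$. I would first try to prove that $O_{5}(G)$ and $O_7(G)$ cannot both be trivial in a minimal counterexample. I would then treat the case of $N$ a $5$- or $7$-group via the irreducible representations of the relevant $\{2,3\}$-groups over $\F_5$ and $\F_7$, which are small enough to handle via \cut constraints on Sylow $2$- and $3$-subgroups.
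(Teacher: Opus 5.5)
Your reduction is the paper's, and it is correct. By the main theorem of \cite{BKMdR}, the only four-edge candidates are the displayed graph (realized by $\MM\times(C_7\rtimes C_3)$), (s) and (t). So everything rests on excluding (s) and (t). That exclusion is the whole content of the paper, and there your proposal has a genuine gap.

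First, minimality alone does not give a unique minimal normal subgroup or $O_{p'}(G)=1$. It only gives $\GK(G/N)\subsetneq\GK(G)$ for all $1\ne N\trianglelefteq G$. To extract structure from this you must feed back the classification of smaller graphs, as in \Cref{V5or7}. If $\pi(G/V)=\pi(G)$ for a minimal normal $p$-subgroup $V$, then $\GK(G/V)$ would be a realizable four-vertex graph with fewer than four edges, which \cite{BKMdR} rules out. Hence $V$ is a Sylow subgroup, and $p\in\{2,3,5,7\}$ by \cite{Bachle2018}. Since realizable three-vertex graphs contain $2-3$ \cite[Corollary~B]{BKMdR}, in fact $p\in\{5,7\}$. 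This is what yields $G=V\rtimes S$ with $F(G)=V$ an elementary abelian Sylow $p$-subgroup and $S$ acting faithfully and irreducibly on $V$. Both of the paper's arguments start from this, and your outline has no substitute for it; you only say you would try to show $O_5(G)$ or $O_7(G)$ is nontrivial.

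More seriously, the contradictions you aim for do not follow from the configurations you describe. A fixed-point-free action of $C_7$ on a $5$-group, even with $2$- and $3$-elements normalizing $\GEN{x}$, does not force elements of order $21$ or $35$. For example, $\F_{5^6}\rtimes(C_7\rtimes C_6)$, with $C_7$ acting by multiplication by a primitive $7$th root of unity and $C_6$ by the Frobenius automorphism, has none.

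Also, $3$-elements cannot rationalize $5$-elements, since $\Aut(C_5)\cong C_4$. The relevant fact, stronger than your index-$2$ bound, is that $5$-elements of a \cut group are rational (\Cref{CutOrders1}\eqref{5rational}). So for $p=5$ the module $V_S$ has the eigenvector property. Exploiting the \cut condition on $V$ is the hard part, and it needs specific tools.

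For (t), the paper proceeds as follows.
\begin{itemize}
\item Excluding $p=7$ uses a counting bound $|V|<4|E_2|^2$ and a $\GL(4,7)$ argument.
\item For $p=5$ with a normal Sylow $7$-subgroup, one gets $S=C_7\rtimes(C_3\times Q)$ with $Q$ rational (\Cref{rationalS2}). An explicit $6\times 6$ matrix computation then shows $V_Q$ has the eigenvector property. The classification in \Cref{VS2} gives $V\rtimes Q\cong\MM\wr K$, which contradicts the eigenspace dimensions.
\item A non-normal Sylow $7$-subgroup is handled by inducing $V$ from a minimal $H$ and using the structure of $F(H/K)$ from \Cref{StructureFED}. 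This produces a Frobenius subgroup $C_2^3\rtimes C_7$ of $H/K$ acting faithfully on $W$, to which \Cref{FrobeniusFaithful} applies.
\end{itemize}

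For (s), the final contradiction is not an element of order $10$ or $35$. It is an element of order $28$ in a group with cyclic Sylow $2$-subgroup, contradicting \Cref{CutOrders1}\eqref{noelements4p}. Reaching it requires first showing that $p=7$, that the Sylow $5$-subgroup of $S$ is not normal, that $Z(S)=1$, that $F(X)$ is a $3$-group, and that $b\notin H$.

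Without these ideas or genuine replacements, your proposal proves the reduction but not the theorem.
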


Observe that the two hypothesis (solvable and \cut) of the theorem are needed. 
For example, by the main result of \cite{GKLNS} every graph with four vertices and four edges is the GK-graph of a finite solvable group. 
On the other hand, $S_9$ is rational and therefore \cut, and its GK-graph is (t). 

%

The graph in \Cref{Main} is the GK-graph of the direct product of the Frobenius groups $C_5^2\rtimes Q_8$ and $C_7\rtimes C_3$. This group is clearly solvable and it is \cut because $C_5^2\rtimes Q_8$ is rational and $C_7\rtimes C_3$ is \cut (see \cite[Lemma~2.6(5)]{BKMdR}). Moreover, by the main theorem of \cite{BKMdR}, the graphs (s) and (t) are the only other possible graphs with four edges which can occur as the GK-graph of a solvable \cut group. So to prove \Cref{Main} we just have to show that they are not the GK-graph of a solvable \cut group.

The paper is organized as follows: In \Cref{SectionNotation} we introduce the notation and terminology used in the paper. \Cref{SectionTechnical} and \Cref{SectionTechnicalCut} are dedicated to technical lemmas, first about not-necessarily \cut groups, and then about \cut groups. In \Cref{Section-s} we prove that graph (s) is not the GK-graph of any solvable \cut group, and in \Cref{Section-t} we obtain the same result for graph (t).

	\section{Notation}\label{SectionNotation}

By default all groups are finite and all modules are right modules.
	
We begin with some common notation and terminology for a group $G$: 
$\pi(G)$ denotes the set of primes occurring as the order of an element of $G$; the center, the Fitting subgroup, and the group of automorphisms of $G$ are denoted $Z(G)$, $F(G)$ and $\Aut(G)$, respectively; the Fitting length of $G$ is denoted $l_F(G)$;  
\[1=F_0(G)\subseteq F_1(G)=F(G)\subseteq F_2(G), \dots\]
is the Fitting series of $G$ and 
\[1=Z_0(G)\subseteq Z_1(G)=Z(G)\subseteq Z_2(G), \dots \]
and 
\[G=\gamma_1(G)\supseteq \gamma_2(G)=G'\supseteq \gamma_3(G), \dots \]
are the upper and lower central series of $G$, respectively.

If $H$ is a subgroup of $G$ then $\N_G(H)$ denotes the normalizer of $H$ in $G$. 
For each $g\in G$, $|g|$ denotes the order of $g$, and if $h$ is another element of $G$ then $g^h=h^{-1}gh$ and $(g,h)=g^{-1}h^{-1}gh$. 
If $p$ is a prime integer then $g_p$ denotes the $p$-part of $g$, and more generally, if $\pi$ is a set of prime integers, then $g_{\pi}$ denotes the $\pi$-part of $G$. 
Moreover, $G_p$ denotes an arbitrary Sylow subgroup of $G$, $G_{\pi}$ denotes an arbitrary  $\pi$-Hall subgroup of $G$ and $O_p(G)=F(G)_p$. As all the groups considered in this paper are finite and solvable, $G_{\pi}$ always exists and is unique up to $G$-conjugation. Finally, if $G$ is a $p$-group, then 
	\[\Omega(G)=\GEN{g\in G: g^p=1}.\]

 If $G$ acts on a set $X$ and $A$ is a subset of $X$, then we denote 
	\[\C_G(A)=\{g\in G : ag=a, \text{ for every } a\in A\}.\]
When this notation is used with $A$ a subset of group $H$ containing $G$, then we are considering $G$ acting on  $H$ by conjugation and hence $\C_G(A)$ is the centralizer of $A$ in $G$. More generally, if $N$ is a normal subgroup of $H$ and $A$ is a subset of $H/N$, then conjugation  by elements  of $G$  on $H$ induces an action of $G$ on $H/N$, and this is the action considered when the notation $\C_G(A)$ is used in this context. If $F$ is a field, then $FG$ denotes the group algebra of $G$ with coefficients in $F$ and if $V$ is an $FG$-module, then the  action of $G$ on $V$ allows using the notation $\C_G(A)$ for $A\subseteq V$. 
		
If $g\in G$, then we say that $g$ is 
	
\quad \emph{rational} in $G$ if every generator of $\GEN{g}$ is conjugate to $g$ in $G$;
	
\quad \emph{inverse semi-rational} in $G$ if every generator of $\GEN{g}$ is conjugate to $g$ or $g^{-1}$ in $G$;
	
\quad \emph{real} in $G$ if it is conjugate to $g^{-1}$ in $G$.
	
Finally	$G$ is said to be 

\quad \emph{rational} if every element of $G$ is rational in $G$, 

\quad \emph{inverse semi-rational} or \cut if every element of $G$ is inverse semi-rational in $G$. The name \cut comes from the fact that a finite group $G$ is \cut if and only if all the central units of the integral group ring of $G$ are trivial.

	Clearly $g$ is rational in $G$ if and only if $[\N_G(\GEN{g}):\C_G(g)]=|\Aut(\GEN{g})|$.
	On the other hand, $g$ is inverse semi-rational in $G$ if and only if it is rational or $[\N_G(\GEN{g}):\C_G(g)]=|\Aut(\GEN{g})|/2$ and $g$ is not real in $G$.
	So $g$ is rational in $G$ if and only if it is inverse semi-rational and real in $G$. 

As usual $G=N\rtimes H$, represents a semidirect product of $N$ and $H$ (with $N$ normal in $G$). Moreover, $G=N\rtimes_{\text{Fr}} H$, represents a Frobenius group with Frobenius kernel $N$ and Frobenius complement $H$. 
	
For a positive integer $n$, $C_n$ denotes a cyclic group of order $n$, $D_n$ a dihedral group of order $n$, $Q_n$ a quaternion group of order $n$ and $\Sigma_n$, the symmetric group on $n$ symbols. Finally, the following Frobenius group, which is the unique rational group of order 200, will play a relevant role in this paper:
	\[\MM=C_5^2\rtimes_{\text{Fr}} Q_8=\text{SG}[200,44].\]
The notation $\text{SG}[n,m]$ represents the $m$-th group of order $n$ in the {\sf GAP} small groups library \cite{GAP4}.

 Let $F$ be a field, let $G$ be a group and let $V$ be a right $FG$-module. 
If $g\in G$ and $\alpha\in F$, then let us denote
	\[V_g(\alpha)=\{v\in V : vg=\alpha v\}.\]
We say that $V_{FG}$ has the \emph{eigenvector property} if for every $v\in V$ and every $\alpha\in F\setminus \{0\}$ there is $g\in G$ such that $vg=\alpha v$, i.e. $v\in V_g(\alpha)$.
In other words, $V_{FG}$ satisfies the eigenvector property if for every $\alpha\in F$, $V=\cup_{g\in G} V_g(\alpha)$. 

Let $H$ be a subgroup of $G$. Then $V_H$ denotes the $FH$-module obtained by restriction of scalars. Let now $W$ be an $FH$-module. Then $W^G$ denotes the induced $FG$-module.
In some cases the latter is denoted $\Ind^G_H(W)$, to avoid ambiguity with the subgroup $H$.  
For $g\in G$, let $W^g$ denote the $F H$-module which is   equal to  $W$ as $F$-vector space and  multiplication by $h\in H$ on $W^g$ is as multiplication by $ghg^{-1}$ on $W$.
Observe that if $T$ is a right transversal of $H$ in $G$, then $W^G=\oplus_{g\in T} W^g$.
If $H$ is normal in $G$, then $I_G(W)$ denotes the inertia group of $W$ in $G$, i.e. 
\[I_G(W)=\{g\in G : W^g\cong W\}.\]

	We identify elementary abelian $p$-groups with vector spaces over the field $\F_p$ with $p$ elements. Similarly, we identify $\F_p G$-modules with elementary abelian $p$-groups $V$ with an action of $G$ by linear maps.
	In that case, $V\rtimes G$ denotes the corresponding semidirect product. 
	In multiplicative group theoretical notation, $V_g(\alpha)=\{v\in V : v^g=v^\alpha\}$ and the eigenvector property of $V_{\F_p G}$ takes the following form:  For every $k\in \{1,\dots,p-1\}$ and every $v\in V$, there is $g\in G$ such that $v^g=v^k$. 
	In that case, we abbreviate the terminology by saying that $V_G$ has the eigenvector property.
	Observe  that  $V_G$ has the eigenvector property if and only if every element of $V$ is rational in $V\rtimes G$.
	Moreover, if $r$ represents a generator of the group of units of $\F_p$, then $V_G$ has the eigenvector property if and only if for every $v\in V$ there is $g\in G$ such that $v^g=v^r$.

\section{Technical results}\label{SectionTechnical}

In this section we present a list of technical results which will be used in the subsequent sections. 
We start with a known result about the Fitting subgroup of a solvable group on which every normal abelian subgroup is cyclic.

\begin{proposition}\cite[Corollary~1.10]{ManzWolf}\label{StructureFED}
	Let $G$ be a solvable group such that every normal abelian subgroup of $G$ is cyclic. Let $F=F(G)$, let $Z$ be the socle of $Z(F(G))$ and let $A=\C_G(Z)$. 
	Then there exist normal subgroups $E$ and $D$ of $G$ with the following properties:
	
	\begin{enumerate}
		\item\label{FZD} $F=ED=\C_A(E/Z)$, $Z=E\cap D$ and $D=\C_F(E)=\C_G(E)$;
		\item for each prime divisor $q$ of $|E|$, a Sylow $q$-subgroup of $E$ is cyclic of order $q$ or extra-special of exponent $q$ or $4$;
		\item \label{Ucyclic} $D$ has a cyclic subgroup $U$ such that $[D:U]\le 2$, $U\trianglelefteq G$ and $\C_D(U)=U$;
	\end{enumerate}
\end{proposition}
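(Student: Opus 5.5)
This is the structure theorem of Manz--Wolf for solvable groups in which every normal abelian subgroup is cyclic. My plan is to construct $E$ and $D$ inside $F=F(G)$ and then verify the properties.

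First I record the structure of $F$. Every characteristic abelian subgroup of $O_q(G)$ is normal in $G$, hence cyclic. By P.~Hall's theorem, each $O_q(G)$ is therefore a central product of an extraspecial (or trivial) group $E_q$ with a group $R_q$ that is cyclic, or, for $q=2$, possibly dihedral, semidihedral or generalized quaternion of order at least $16$. For the construction, $Z$ is the socle of $Z(F)$; it is cyclic of squarefree order, because it is a normal abelian subgroup of $G$. The group $A=\C_G(Z)$ is normal in $G$.

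\textbf{Construction of $D$.} I will try to take $D$ to be a maximal normal subgroup of $G$ contained in $F$ among those with a cyclic normal subgroup $U$ of index at most $2$ that is self-centralizing in $D$. As a first approximation, let $U$ be a maximal normal abelian subgroup of $G$ contained in $F$, and let $D=\C_F(U)$. Then $U$ is cyclic, and $D/U$ embeds in $\Aut(U)$. I will then adjust the choice so that $[D:U]\le2$ and $\C_D(U)=U$ hold.

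\textbf{Construction of $E$.} For each prime $q$, let $E_q$ be generated by the elements of order $q$ (or $4$ when $q=2$) in a complement-type subgroup of $O_q$, namely $\Omega$ applied to a minimal noncyclic normal subgroup. Set $E=\prod_q E_q Z$. Each Sylow subgroup of $E$ is then either of order $q$ or extraspecial of exponent $q$ or $4$. This uses the Hall classification together with the fact that $\Omega$ of an extraspecial $q$-group of odd order is of exponent $q$ once $Z$ has order $q$.

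\textbf{Remaining properties.} I then need $F=ED$, $E\cap D=Z$, $D=\C_G(E)$, and $\C_A(E/Z)=F$. The first three follow from the central product decomposition. The last follows from the standard fact $\C_G(F)\le F$ for solvable $G$. Applied modulo the chief factors $E/Z$, this says that an element of $A$ centralizing $E/Z$ and $Z$ acts on $F$ as an element stabilizing a normal series. Its $q'$-part therefore centralizes $F$, and hence lies in $F$.

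\textbf{Main obstacle.} The hardest step is the $q=2$ case of the constructions of $D$ and $E$, where $R_2$ may be dihedral, semidihedral or quaternion. There, getting $[D:U]\le2$ together with $U$ normal in $G$ requires care. Since the paper cites this result from \cite[Corollary~1.10]{ManzWolf}, I would simply invoke that reference rather than reprove it.
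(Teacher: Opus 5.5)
The paper gives no proof of this proposition; it quotes it from \cite[Corollary~1.10]{ManzWolf}. Your closing decision to invoke that reference is therefore exactly the paper's route, and it suffices. The sketch before it, however, would not stand as a proof, and it fails exactly where the real content of the result lies.

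Start with your construction of $D$. With $D=\C_F(U)$, the group $D$ centralizes $U$. So it is $F/D$, not $D/U$, that embeds in $\Aut(U)$. Also $\C_D(U)=D$, so the requirement $\C_D(U)=U$ would force $D=U$. Moreover, in a non-nilpotent group a maximal normal abelian subgroup need not be self-centralizing. In $\mathrm{SL}(2,3)$ the only choice is $U=Z\cong C_2$, which gives $\C_F(U)=Q_8$, whereas the correct data are $E=Q_8$ and $D=U=Z$. Since the statement forces $D=\C_F(E)$, $D$ has to be defined from $E$. With an independently chosen $D$, nothing yields $E\cap D=Z$ or $D=\C_G(E)$. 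Note also that $D=\C_G(E)$ does not come from Hall's decomposition. It needs $\C_G(E)\le F$, which follows from $F=\C_A(E/Z)$ because $\C_G(E)\le \C_A(E/Z)$.

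Your construction of $E$ also fails. For odd $q$ the right choice is $E_q=\Omega_1(O_q(G))$. A minimal noncyclic normal subgroup of $G$ inside $O_q(G)$ can be a proper, non-unique extraspecial subgroup of it when $G$ acts reducibly on $\Omega_1(O_q(G))/Z_q$. At $q=2$ Hall's decomposition is not canonical, and your recipe fails outright. In $G=F=SD_{16}$ the minimal noncyclic normal subgroups are the dihedral and the quaternion subgroups of order $8$. Taking either of them, or the subgroup generated by its elements of order $4$, gives $E\,\C_F(E)\neq F$; here one needs $E=Z$, $D=F$ and $U\cong C_8$.

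Finally, your argument for $F=\C_A(E/Z)$ is incomplete. An element of $\C_A(E/Z)$ stabilizes $1\le Z\le E$, which is a series of $E$, not of $F$. Coprime action therefore only makes its $q'$-part centralize $E_q$. To make it centralize $D_q$ as well, you need that $D_q$ has no nontrivial $q'$-automorphisms fixing $Z_q$ and normalizing $U_q$. This is exactly where $U\trianglelefteq G$ is used: $\Aut(Q_8)$ has elements of order $3$, and in $\mathrm{SL}(2,3)$ the wrong choice $E=Z$, $D=Q_8$ gives $\C_A(E/Z)=G\neq F$. After that you must still pass from the fact that $\C_A(E/Z)$ induces a $q$-group of automorphisms on each $O_q(G)$ to the nilpotency of $\C_A(E/Z)$. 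One way is to show that $\C_{\C_A(E/Z)}(F_{q'})$ is nilpotent and contains every $q$-element of $\C_A(E/Z)$. So either cite the corollary outright, as the paper does, or supply these arguments.
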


\begin{lemma}\label{FrobeniusFaithful}
		Let $F$ be a field, let $G$ be a Frobenius group with Frobenius complement $H$ and assume that the order of $G$ is not divisible by the characteristic of $F$.
		If $M$ is a faithful $FG$-module, then for every $x\in H$ there exists $m\in M\setminus \{0\}$ with $mx=m$. In particular, $H$ does not act fixed-point-freely on $M$.
	\end{lemma}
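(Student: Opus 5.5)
The plan is to reduce to the standard fact that the kernel of a Frobenius group has a fixed-point-free complement, and then use that a faithful module cannot be killed by the whole kernel. Write $G=K\rtimes H$ with Frobenius kernel $K$, and fix $x\in H$. We may assume $x\neq 1$; for $x=1$ any nonzero $m$ works, and $M\neq 0$ because $M$ is faithful and $G\neq 1$. Since $\operatorname{char}F\nmid |G|$, Maschke's theorem applies to every subgroup, so I will decompose modules freely into direct sums.

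First I restrict $M$ to $K$ and decompose it as $M=C_M(K)\oplus [M,K]$, where $[M,K]$ is the sum of the nontrivial isotypic components. Faithfulness gives $[M,K]\neq 0$, since otherwise $K$ acts trivially. So there is a nontrivial irreducible $FK$-submodule $W$ of $M$ (after extending scalars if needed; see below). Now consider $N=\sum_{h\in H} Wh$. The key point is that $H$ permutes the conjugates $W^h$. No nontrivial element of $H$ fixes the isomorphism class of a nontrivial irreducible $K$-module: that is Brauer's permutation lemma combined with the fact that $H$ acts fixed-point-freely on $K$, so it fixes no nontrivial conjugacy class of $K$. Since $|H|$ is coprime to the characteristic, the number of fixed irreducibles equals the number of fixed classes, and the only fixed class is the trivial one.

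Hence the conjugates $Wh$, $h\in H$, lie in pairwise distinct isotypic components. The sum $N=\bigoplus_{h\in H} Wh$ is therefore direct and is $H$-isomorphic to $W\otimes_F FH$ as an $F\langle x\rangle$-module, in fact as a free $F\langle x\rangle$-module. Taking $w\in W\setminus\{0\}$, the vector $m=\sum_{i=0}^{|x|-1} w x^i$ is nonzero, because its summands lie in distinct direct summands, and it satisfies $mx=m$. The final claim follows, since this $m$ is a nonzero fixed point of $x\in H\setminus\{1\}$.

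The main obstacle is the step that no nontrivial element of $H$ stabilises a nontrivial irreducible $FK$-module over an arbitrary field $F$, since Brauer's permutation lemma is usually stated for complex characters. I would first pass to the algebraic closure $\overline F$. A fixed vector for $x$ in $M\otimes\overline F$ yields one in $M$, because $C_M(x)\otimes\overline F=C_{M\otimes \overline F}(x)$ by linearity of taking fixed points, and faithfulness is preserved. Over $\overline F$ with coprime characteristic, irreducible $K$-modules correspond to Brauer (ordinary) characters. The permutation lemma then applies, using that $x$ fixes only the identity class of $K$: by fixed-point-freeness, $C_K(x^j)=1$ for each $x^j\neq 1$, so every class is moved. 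The same argument works for each nontrivial power of $x$, which is what freeness of $N$ over $\langle x\rangle$ needs.
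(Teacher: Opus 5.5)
Your proof is correct, but it is organised differently from the paper's. The paper starts from a simple $FG$-submodule $V$ of $M$ on which the kernel acts nontrivially. It extends scalars to a splitting field $E$, so that $V\otimes_F E$ is a sum of Galois conjugates of an absolutely irreducible $EG$-module. It then quotes the description of irreducible characters of Frobenius groups (Huppert's Theorem~18.7) to write that module as $\Ind_K^G(U)=\bigoplus_{h\in H}U\otimes h$. The vector $u\otimes\sum_{h\in H}h$ is then fixed by all of $H$, and the conclusion is brought back to $V$ by observing that $1$ is an eigenvalue of $\mathfrak{X}^E(x)=\mathfrak{X}(x)$.

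You never use a $G$-simple submodule. Instead you take a nontrivial simple $\overline{F}K$-submodule and prove by hand, via Brauer's permutation lemma, the fact underlying the quoted theorem: no nontrivial element of $H$ stabilises a nontrivial irreducible module of the kernel. Hence the translates $Wh$ lie in distinct homogeneous components, and $\bigoplus_{h\in H}Wh$ is free over $\overline{F}\GEN{x}$. Your version is more self-contained. Your descent is also cleaner than the paper's Galois-conjugate bookkeeping: the fixed space of $x$ is $\ker(x-1)$, whose dimension does not change under extension of scalars. The paper's version is shorter because its key input is cited.

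You are right that passing to $\overline{F}$ is essential, not cosmetic. In $C_7\rtimes C_3$ over $\F_2$, the element of order $3$ stabilises both $3$-dimensional simple $\F_2C_7$-modules, so the claim in your second paragraph genuinely fails over $F$ itself.

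The one step you should spell out is ``$\C_K(x^j)=1$, so every nontrivial class is moved''. Suppose $k^{x^j}=k^g$ with $g\in K$. Then $x^jg^{-1}$ centralises $k$. Since $|K:\C_K(x^j)|=|K|$, the $K$-conjugacy class of $x^j$ is the whole coset $x^jK$. So $x^jg^{-1}$ is $K$-conjugate to $x^j$, hence also acts fixed-point-freely on $K$, and therefore $k=1$.
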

	\begin{proof}
		Let $N$ be the Frobenius kernel of $G$. As $M$ is faithful, it has a simple $FG$-submodule $V$ on which the action of $N$ is non trivial. 
		Let $\mathfrak{X}$ be the $F$-representation of $G$ associated to $V$ and let $E$ be an splitting field of $G$.  Then $V\otimes_F E$ is the direct sum of the Galois conjugates of some absolutely irreducible $EG$-module $W$ and its representation $\mathfrak{X}^E$ is simply $\mathfrak{X}$, considered as $E$-representation. 
		Hence, $N$ is not in the kernel of $W$. 
		By \cite[Theorem~18.7]{Huppert1998}, there exists some $EN$-module $U$ such that $W=\Ind_N^G(U)= \bigoplus_{h\in H}U\otimes h$. 
		Let $u\in U\setminus \{0\}$ and $x\in H$. Then $w=u\otimes \sum_{h\in H} h\ne 0$ and $wx=w$. Therefore, the action of $x$ on $W$ is not fixed-point-free, so $1$ is an eigenvalue of the representation of $W$ in $x$. Using the decomposition of $V\otimes_ F E$, we conclude that $1$ is also an eigenvalue of $\mathfrak{X}^E(x)$. As $\mathfrak{X}^E(x)=\mathfrak{X}(x)$ we conclude that there exists some $m\in V\setminus \{0\}$ with $mx=m$.
\end{proof}

\begin{lemma}\label{FabelianSylow}
	Let $G$ be a finite group such that $\pi(G)=\{2,p\}$ with $p$ odd and $G_2\cong Q_8$. 
	Suppose that $G$ has not elements of order $4p$ and every $p$-element of $G$  is real in $G$.
	Then $F(G)$ is an abelian Sylow $p$-subgroup of $G$ and $a^x=a^{-1}$ for every element $x\in G$ of order $2$ and every $a\in F(G)$.
\end{lemma}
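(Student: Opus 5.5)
The plan is to first show that $O_2(G)=1$, so that $F(G)=O_p(G)=:P$. Then I would show that the unique involution $z$ of $G_2\cong Q_8$ acts fixed-point-freely on $P$, which forces $P$ to be abelian and inverted by every involution. Finally I would show that $P$ is a full Sylow $p$-subgroup. Note that $G$ is solvable by Burnside's $p^aq^b$ theorem, and every involution of $G$ is conjugate to $z$, since $Q_8$ has a unique involution. So it is enough to prove that $z$ (or a conjugate of it) inverts $F(G)$. Two standard facts will be used throughout.
\begin{enumerate}
\item A real $p$-element $a$ is inverted by some $2$-element $y$, and $y$ lies in a conjugate of $Q_8$.
\item If a noncyclic abelian $2$-group $V$ acts coprimely on a $p$-group $X$, then $X=\GEN{\C_X(v) : 1\ne v\in V}$.
\end{enumerate}

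\emph{Step 1: $z$ acts fixed-point-freely on $P=O_p(G)$.} Suppose $X=\C_P(z)\neq 1$. Then $X$ is normalized by $\C_G(z)\supseteq Q_8$, so $Q_8/\GEN{z}\cong C_2^2$ acts on $X$. By fact~2, some nontrivial $v\in Q_8/\GEN{z}$ centralizes a nontrivial element $a\in X$. The preimages of $v$ are elements $y$ of order $4$, and the action of $y$ on $X$ is that of $v$, since $z$ acts trivially on $X$. Hence $ya$ has order $4p$, which is a contradiction. Therefore $z$ is a fixed-point-free automorphism of order $2$ of $P$. Then $P$ is abelian and $a^z=a^{-1}$ for all $a\in P$, and the same holds for every conjugate of $z$.

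\emph{Step 2: $O_2(G)=1$.} This is the step I expect to be the main obstacle. If $O_2(G)\neq1$, then it is a nontrivial normal subgroup of a conjugate of $Q_8$. It is therefore isomorphic to $C_2$, $C_4$ or $Q_8$, so it contains $z$ as a characteristic subgroup, and $z$ is central in $G$. Take a nontrivial $p$-element $a$. By fact~1 it is inverted by a $2$-element $y$, necessarily of order $4$, with $y^2=z$.

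If $O_2(G)=Q_8$, then $\C_G(Q_8)$ contains no $p$-elements, because there are no elements of order $4p$. Hence a Sylow $p$-subgroup embeds in $\Aut(Q_8)\cong\Sigma_4$. This forces $p=3$ and $G\cong \mathrm{SL}(2,3)$, whose elements of order $3$ are not real, a contradiction.

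If $O_2(G)\in\{C_2,C_4\}$, I would pass to $\bar G=G/\GEN{z}$, whose Sylow $2$-subgroup is $C_2^2$. Then $O_p(\bar G)$ lifts to $\GEN{z}\times O_p(G)$, and I would apply fact~2 to the action of $C_2^2$ on $O_p(\bar G)$. This should produce an element of order $4$ centralizing a nontrivial $p$-element, unless $O_p(G)=1$. But $O_p(G)=1$ would contradict $\C_G(F(G))\le F(G)$. This case needs care, and it is where I would spend most effort.

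\emph{Step 3: $P$ is a Sylow $p$-subgroup.} Since $O_p(G/P)=1$, we get $F(G/P)=O_2(G/P)\neq1$ whenever $G\neq P$. As before, this contains the image of $z$, so $[G_p,z]\subseteq P$. By coprime action, every coset of $P$ in $G_p$ contains an element $b$ centralizing $z$. If $b\notin P$, then $b$ centralizes $z$, while $z$ inverts the abelian group $P$, and $b$ acts on $P$. I would derive a contradiction by considering $Q_8$ acting on $\GEN{P,b}$ and again locating an element of order $4$ centralizing a $p$-element, as in Step 1. Hence $G_p=P$, and $F(G)=P$ is an abelian Sylow $p$-subgroup inverted by every involution.
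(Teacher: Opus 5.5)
\textbf{Verdict: there is a genuine gap in Step 3.} Steps 1 and 2 are sound. In Step 1, the generation-by-centralizers argument gives abelianness and inversion more cleanly than the paper's bijection $a\mapsto a^xa$, and it does not even need reality. The $C_2/C_4$ case of Step 2 is in fact immediate from Step 1, since a central $z$ centralizes $O_p(G)$, forcing $O_p(G)=1$.

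The gap is the last sentence of Step 3. There you plan to reach a contradiction by locating an element of order $4$ that centralizes a $p$-element. The only inputs are that $b\notin P$ centralizes $z$, that $z$ inverts the abelian group $P$, and that there are no elements of order $4p$. No argument from these inputs alone can work, as the following example shows.

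Take $G=\F_3^2\rtimes \mathrm{SL}(2,3)$ with the natural action.
\begin{enumerate}
\item $\pi(G)=\{2,3\}$ and $G_2\cong Q_8$.
\item $z=-I$ inverts $P=O_3(G)=F(G)=\F_3^2$.
\item An element $b$ of order $3$ of $\mathrm{SL}(2,3)$ centralizes $z$ and lies outside $P$.
\item There are no elements of order $12$. Every element $y$ of order $4$ is conjugate into $\mathrm{SL}(2,3)$, and there $\C_{\mathrm{SL}(2,3)}(y)=\GEN{y}$. Also $y^2=-I$ has no fixed points on $\F_3^2$. Hence $\C_G(y)=\GEN{y}$.
\end{enumerate}
Yet $P$ has order $9$ and is not a Sylow $3$-subgroup. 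The only hypothesis of the lemma that fails is reality of $3$-elements: elements of order $3$ of $\mathrm{SL}(2,3)$ are not conjugate to their inverses, already modulo $P$. So Step 3 must use reality, and your sketch never does. Note also that in this example $Q_8$ does not normalize $\GEN{P,b}$, so "$Q_8$ acting on $\GEN{P,b}$" is not available in general.

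\textbf{How to repair Step 3.} The fix is the reduction the paper uses. The hypotheses pass to $G/P$:
\begin{enumerate}
\item the order of $gP$ divides the order of $g$, so $G/P$ has no elements of order $4p$;
\item a $p$-element of $G/P$ lifts to a $p$-element of $G$, and the image of an inverting conjugator inverts it in $G/P$.
\end{enumerate}
Now suppose $p$ divides $|G/P|$. Since $O_p(G/P)=1$ and $G/P$ is solvable and non-trivial, $F(G/P)=O_2(G/P)\neq 1$. Your Step 2, applied to $G/P$, rules this out. In the $Q_8$ case it uses reality via $\mathrm{SL}(2,3)$. In the $C_2/C_4$ case it uses $\C(F)\subseteq F$ together with the fact that $\Aut(C_2)$ and $\Aut(C_4)$ are $2$-groups. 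Hence $p\nmid|G/P|$, so $P$ is a Sylow $p$-subgroup, and Steps 1 and 2 then finish the proof.
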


\begin{proof}  
	Let $F=F(G)$. As the $p$-elements of $G$ are real, $F$ cannot contain a Sylow $2$-subgroup of $G$. Hence $F_2$ is a cyclic $2$-group and so it does not have  non-trivial  automorphisms of odd order. 
	   As $G$ is solvable, $\C_G(F)\subseteq F$ and therefore, $F_p\neq 1$ and the order of $F(G/F_p)$ is not multiple of $p$.
	Suppose that   $F_p$  is not a Sylow $p$-subgroup of $G$. Then    $G/F_p$ satisfies the hypothesis of the lemma. So again the $p$-part of $F(G/F_p)$ is non-trivial, a contradiction.
	This proves that $F_p$ is a Sylow $p$-subgroup of $G$. 
	
	Let $a\in F_p$. By the assumptions $G$ has some $2$-element $x$ such that $a^{x}=a^{-1}$. 
	We claim that if $a\ne 1$, then $|x|=2$. Otherwise $|x|=4$, so there is another $2$-element $y\in G$ such that $\GEN{x,y}$ is a Sylow $2$-subgroup of $G$. Let $A=\GEN{a,a^y}$ and $B=\GEN{A,x,y}$. Then $A$ is normal in $B$ and hence so is $A'$. Note that $a\not \in A'$ as $A\neq A'$. We will prove that $ayA', axyA' $ or $a^ya yA'$ has order divisible by $4p$, which contradicts one of the assumptions.
	Indeed, if $a^yA'\in \GEN{aA'}$, then either $aA'$ commutes with $yA'$, or $a^yA'=a^{-1}A'$, because $a$ is a $p$-element and $x^2=y^2$ commutes with $a$. In the latter case $aA'$ commutes with $xyA'$, so either $ayA'$ or $axyA'$ have order divisible by $4p$. 
	Otherwise $\GEN{aA', a^yA'}$ is an abelian $p$-group of rank $2$, and $aa^yA'$ is non-trivial and commutes  with $yA'$, so the order of $aa^yyA'$ is multiple of $4p$. This finishes the proof of the claim.

	We conclude that the order of $F$ is odd, since otherwise there would be a unique (and hence central)  element of order $2$ in $G$, contradicting the claim. 
	
	Let $x$ be an element of order $4$ in $G$. 
We claim that the map $f:F\to F$, given by $f(a)=a^xa$ is a bijection and $a^{x^2}=a^{-1}$ for every $a\in F$. Indeed, for the first part of the claim let $a,b\in F$ with $f(a)=f(b)$ and let $c=b^{-1}a$. Then 
	\[b^xb=a^xa=(bc)^xbc= b^xc^xbc=b^xbc^{xb}c.\]
	Thus $c^{xb}=c^{-1}$, but $|xb|=4$, as $|x|=4$ and $G$ has not elements of order $4p$. Then $c=1$, by the previous claim, so $a=b$.  
	For the second part of the claim, let $a\in F$. Since $f$ is a bijection, $a=b^xb$ for some $b\in F$. Since $G$ has no elements of order $4p$, the element $xb$ has order $4$, so $(xb)^2= x^2b^x b = x^2a$ has order $2$ and therefore $a^{x^2}=a^{-1}$. This finishes the proof of the claim. 
	Finally, considering the automorphism of $F$ given by conjugation by $x^2$, we deduce
	that the map $a \longmapsto  a^{-1}$ is an automorphism of $F$ and hence $F$ is abelian.
\end{proof}

\begin{lemma}\label{HAbelCic} Let $G$ be a finite group and let $V$ be an irreducible $FG$-module where $F$ is a field with characteristic not dividing $|G|$. Let $H\leq G$ be minimal such that there exists an $FH$-submodule $W$ of $V$ such that $W^G\cong V$. 
Let $N$ be a normal subgroup of $H/\C_H(W)$. Then $W_N$ is homogeneous. If in addition $N$ is abelian, then it is cyclic.
\end{lemma}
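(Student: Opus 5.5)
The argument uses Clifford theory together with the minimality of $H$. First note that $W$ is an irreducible $FH$-module: any nonzero proper submodule $W'$ of $W$ would give an induced module $W'^G$ that is a nonzero proper submodule of $W^G\cong V$, contradicting the irreducibility of $V$. Let $K=\C_H(W)$, so $N=L/K$ for some $L\trianglelefteq H$ with $K\le L$. Saying $W_N$ is homogeneous is the same as saying $W_L$ is homogeneous, since $K$ acts trivially. The plan is to argue by contradiction using Clifford's theorem applied to $L\trianglelefteq H$.

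Assume $W_L$ is not homogeneous. By Clifford's theorem (valid since $\mathrm{char} F\nmid |G|$), $W_L$ decomposes as a direct sum of homogeneous components, which $H$ permutes transitively. Let $W_1$ be one of these components and $T=I$ its stabilizer in $H$, namely the inertia group of the corresponding simple $FL$-summand. Non-homogeneity gives $T\neq H$. Clifford's theorem also gives that $W_1$ is an $FT$-submodule and $W\cong W_1^H$. By transitivity of induction, $W_1^G\cong (W_1^H)^G\cong W^G\cong V$. Here $W_1$ is an $FT$-submodule of $W\subseteq V$ and $T<H$, which contradicts the minimality of $H$. So $W_N$ is homogeneous.

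Now suppose in addition that $N$ is abelian. Since $W_N$ is homogeneous, $W_N\cong U^{m}$ for a simple $FN$-module $U$, and $N$ acts faithfully on $W$ because $K=\C_H(W)$ has been factored out. Hence $N$ acts faithfully on $U$: the kernel of $N$ on $U$ equals its kernel on $U^m=W$, which is trivial. A faithful simple module of an abelian group $N$ over $F$ has $\mathrm{End}_{FN}(U)=D$, a division ring by Schur's lemma. The image of $FN$ in $\mathrm{End}_F(U)$ is commutative and simple (irreducible and commutative, so a field $E$), and $N$ embeds into $E^\times$. A finite subgroup of the multiplicative group of a field is cyclic, so $N$ is cyclic.

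The main point requiring care is the Clifford step: checking that the homogeneous component $W_1$ really is a submodule of $V$ and that $W\cong\Ind_T^H(W_1)$ when working over a general field $F$ rather than a splitting field. This holds because Clifford's theorem on homogeneous components and their stabilizers works over any field when $W$ is simple. The minimality hypothesis in the statement is exactly what turns this into a contradiction.
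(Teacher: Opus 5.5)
Your proof is correct and follows the paper's argument. The paper applies Clifford's theorem to $W$ as a module over $H/\C_H(W)$ and pulls the inertia group back to $H$, while you work directly with the preimage $L$ of $N$ in $H$; in both cases $W\cong W_1^{T}$-induced-up-to-$H$, transitivity of induction and the minimality of $H$ force a single homogeneous component.

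You also justify two points the paper leaves implicit: that $W$ is irreducible, via exactness of induction, and that an abelian group with a faithful irreducible module embeds in the multiplicative group of a field and is therefore cyclic.
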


\begin{proof} Write $L=H/\C_H(W)$. Consider $W$ as a faithful irreducible $FL$-module. By Clifford's Theorem, if $\{W_i\}_{i=1}^r$ are the homogeneous components of $W_N$ and $I=I_{L}(W_1)$, then 	
	\[r=[L:I], \quad W_N=W_1\oplus \cdots \oplus W_r\]
and $W_1$ is an $FI$-module with $\Ind^L_I(W_1)\cong W$. 
Let $J$ be the preimage of $I$ in $H$ under the natural morphism $H\to L$. 
Then $\Ind_{J}^{H}(W_1)\cong W$ as $FH$-modules and, as a consequence, $\Ind_{J}^{G}(W_1)\cong V$. By minimality, $J=H$ and $r=1$. It follows that $W_N=W_1$ is homogeneous and so $W_N\cong U^a$ for some irreducible $FN$-modulo $U$ and $a\in \NN$. Since $W_N$ is faithful and homogeneous, $U$ is a faithful irreducible $FN$-module. If $N$ is abelian, then it is necessarily cyclic.
\end{proof}

\begin{lemma} \label{GSylowp}
	Let $p$ be a prime integer and let $G$ be a finite non-perfect group generated by elements of order $p$ with a Sylow subgroup $\GEN{a}$ of order $p$.
	Then 
	\begin{enumerate}
		\item\label{GpG'} $G=G'\rtimes \GEN{a}$ and $G'$ is a $p'$-group.
		\item\label{GpB} $G'=\GEN{b\in G : p\nmid |b| \text{ and } |ab|=p}$.
		\item\label{GpAbelian} If $G'$ is abelian, then $a$ acts on $G'$  by conjugation without fixed points. 		
	\end{enumerate}
\end{lemma}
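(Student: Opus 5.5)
The plan is to pass to the abelianization for part (\ref{GpG'}), to use a coset argument for part (\ref{GpB}), and to use the norm map of $\GEN{a}$ on $G'$ for part (\ref{GpAbelian}).

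\emph{Part (\ref{GpG'}).} Since $G$ is generated by elements of order $p$, the abelian group $G/G'$ is generated by elements of order dividing $p$. Hence $G/G'$ is an elementary abelian $p$-group. It is non-trivial because $G$ is not perfect. Its order divides the $p$-part of $|G|$, which is $p$, so $|G/G'|=p$. It follows that $|G'|=|G|/p$ is coprime to $p$, so $G'$ is a $p'$-group. Then $a\notin G'$, and therefore $G=G'\rtimes\GEN{a}$.

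\emph{Part (\ref{GpB}).} Let $B$ be the subgroup generated by all $b\in G$ with $p\nmid|b|$ and $|ab|=p$.

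First I show $B\subseteq G'$. For such a $b$, its image in $G/G'\cong C_p$ has order dividing $|b|$, which is prime to $p$, so $b\in G'$.

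Next I check that $a$ normalizes $B$. If $b$ is a generator of $B$, then $b^a$ also has $p'$-order, and $ab^a=ba$ is conjugate to $ab$, so it has order $p$. Hence $b^a$ is again a generator of $B$.

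Now let $x$ be any element of order $p$. By (\ref{GpG'}) we can write $x=ca^i$ with $c\in G'$ and $p\nmid i$. Choose $k$ with $ki\equiv 1 \pmod p$. Then $y=x^k=c'a$ for some $c'\in G'$, and $|y|=p$. Put $b=(c')^{a}=a^{-1}c'a\in G'$. This $b$ has $p'$-order and satisfies $ab=c'a=y$, which has order $p$, so $b\in B$. Since $a$ normalizes $B$, we get $c'=b^{a^{-1}}\in B$. Therefore $y\in B\GEN{a}$, and so $x=y^i\in B\GEN{a}$.

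As $G$ is generated by elements of order $p$, this gives $G=B\GEN{a}$. Since $B\subseteq G'$ is a $p'$-group, comparing orders gives $|B|=|G|/p=|G'|$. Hence $B=G'$.

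\emph{Part (\ref{GpAbelian}).} Write $G'$ additively and let $\sigma$ be conjugation by $a$. Because $G'$ is abelian, the norm $N=1+\sigma+\dots+\sigma^{p-1}$ is an endomorphism of $G'$. For $b\in G'$ we have
\[(ab)^p=a^p\, b^{a^{p-1}}\cdots b^{a}\, b = N(b).\]
So every generator $b$ from (\ref{GpB}) satisfies $N(b)=0$. By (\ref{GpB}) these generators generate $G'$, hence $N=0$ on all of $G'$.

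If $c\in G'$ is fixed by $a$, then $0=N(c)=pc$. Since $G'$ has $p'$-order, this forces $c=0$. Thus $a$ acts on $G'$ without non-trivial fixed points.

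The only delicate step is the inclusion $G'\subseteq B$ in part (\ref{GpB}). The key point there is to rewrite an arbitrary element of order $p$ in the form $c'a$, and then to use that $a$ normalizes $B$.
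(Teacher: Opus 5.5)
Your proof is correct. Parts~(1) and~(3) coincide with the paper's arguments: the abelianization is forced to have order $p$, and the identity $(ab)^p=b^{a^{p-1}}\cdots b^{a}b$ for the generators extends to all of $G'$ because the norm map is a homomorphism when $G'$ is abelian.

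In part~(2) you take a somewhat different route. The paper writes $B$ for the generating set and first proves that $\GEN{B}$ is normal in all of $G$. For $x\in G$ and $b\in B$, the elements $[a,x]$ and $[a,x]b^x$ lie in $B$, because $a^x=a[a,x]$ and $(ab)^x=a[a,x]b^x$ have order $p$ and these factors lie in the $p'$-group $G'$. It then uses Sylow's theorem to write a power of any element of order $p$ as $a^y=a[a,y]$ with $[a,y]\in B$. Hence $G/\GEN{B}$ is cyclic, generated by the image of $a$, and so $G'\subseteq\GEN{B}$.

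You need only invariance under conjugation by $a$, and you replace Sylow conjugacy by the decomposition $G=G'\rtimes\GEN{a}$ from part~(1). This places a suitable power of $x$ in the coset $G'a$ as $c'a=a(c')^a$, with $(c')^a$ a generator. You then finish by comparing $|B\GEN{a}|=|B|\,p$ (your $B$ being the subgroup) with $|G|$.

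Your version is slightly more elementary, since it avoids both Sylow's conjugacy theorem and the commutator manipulations. The paper's version obtains the normality of $\GEN{B}$ directly and concludes via the cyclic quotient rather than by counting.
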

\begin{proof}
Let $B=\{b\in G : p\nmid |b| \text{ and } |ab|=p\}$. 

\eqref{GpG'} 
	Since $G/G'$ is a non-trivial abelian group generated by elements of order $p$ whose Sylow $p$-subgroups have order at most $p$, it follows that $G/G'$ has order $ p $, $G'$ is a $p'$-group  and $G=G'\rtimes \GEN {a}$. 	
	In particular, $B\subseteq G'$.

\eqref{GpB}	We claim that $\GEN{B}$ is normal in $G$. To prove this, it suffices to show that the conjugates of elements of $B$ lie in $\GEN{B}$. If $x\in G$ and $b\in B$, then $(ab)^x= a^x b^x= a[a,x] b^x$ which has order $p$  and $[a,x]b^x\in G'$, as $b\in G'$. Thus $p\nmid |[a,x]b^x|$ and  so $[a,x] b^x\in B$. Moreover $a^x= a[a,x]$ has order $p$,  and as $[a,x]\in G'$, we get $p\nmid |[a,x]|$. Hence  $[a,x]\in B$ and $b^x\in \GEN{B}$.
	
Clearly $G/\GEN{B}$ is generated by the images of elements of order $p$ of $G$. 
	If $x$ is an element of order $p$ in $G$, then there is $i\in \{1,\dots,p-1\}$ and $y\in G$ such that $x^i=a^y=a[a,y]$. Set $b=[a,y]\in G'$, so that $p\nmid |b|$ and $|ab|=|x|=p$. Therefore $b\in B$. 
	As $G$ is generated by elements of order $p$, this proves that $G/\GEN{B}$ is generated by $a\GEN{B}$ and hence $G/\GEN{B}$ is cyclic of order $p$ and $G'=\GEN{B}$.

\eqref{GpAbelian} Finally, suppose that $G'$ is abelian. All the elements $b$  of $B$ satisfy that $ b^{a^{p-1}}b^{a^{p-2}}\dots b^a b= (ab)^p=1$. 
	Since $G'$ is abelian, every element in $G'$ satisfies the same identity. Thus, if $x\in G'$ is fixed by  conjugation with  $a$, then $1=x^{a^{p-1}}x^{a^{p-2}} \dots x^a x= x^p$, so $x=1$ because $x$ is a $p'$-element. 
\end{proof} 

\begin{lemma}\label{InducedFPF}
	Let $G=V\rtimes S$ be a group such that $V$ is a non-trivial elementary abelian $p$-group and let $q$ be a prime different from $p$. 
	Suppose that $G$ has not elements of order $pq$. Consider $V$ as an $\F_pS$-module and let $H$ be a subgroup of $S$ such that $V=\Ind_H^S(W)$ for some $\F_pH$-submodule $W$ of $V$. Then $H$ contains all the elements of order $q$ of $S$.
%
%
\end{lemma}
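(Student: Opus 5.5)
Argue by contradiction: suppose $x\in S$ has order $q$ and $x\notin H$, and produce a nonzero vector of $V$ fixed by $x$. Such a vector $v$ gives an element $vx$ of order $pq$ in $G$, since $v$ and $x$ commute and have coprime orders $p$ and $q$. This contradicts the hypothesis.

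Write $V=\Ind_H^S(W)=\bigoplus_{t\in T} Wt$, where $T$ is a right transversal of $H$ in $S$. Right multiplication by $x$ permutes the right cosets $Ht$, and it permutes the summands $Wt$ in the same way: $(Wt)x=W(tx)$, and $W(tx)$ is the summand of the coset $Htx$. Since $x$ has prime order $q$, each orbit of $\GEN{x}$ on $H\backslash S$ has size $1$ or $q$.

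If some orbit has size $q$, say $Ht, Htx,\dots,Htx^{q-1}$ are pairwise distinct, pick $0\ne w\in W$ and set
\[ v = wt + wtx + \cdots + wtx^{q-1}. \]
Each term lies in a different summand, so $v\ne 0$, and clearly $vx=v$. So it suffices to exhibit an orbit of size $q$.

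The coset $H\cdot 1=H$ is fixed by $x$ only if $Hx=H$, i.e. $x\in H$, which we have excluded. Hence the orbit of $H$ itself has size $q$, and taking $t=1$ gives $v=w+wx+\cdots+wx^{q-1}$, a nonzero fixed vector. Here $W\ne 0$ because $V\ne 0$.

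The only delicate point is checking that $v\neq 0$, which follows from directness of the decomposition together with the distinctness of the cosets. No coprimality between $q$ and $p$ is needed beyond producing the element of order $pq$.
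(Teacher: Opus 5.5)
Your proof is correct and is essentially the paper's argument. For $x\in S\setminus H$ of order $q$, the cosets $H,Hx,\dots,Hx^{q-1}$ are pairwise distinct, so $w+wx+\cdots+wx^{q-1}$ (written multiplicatively in the paper as $ww^s\cdots w^{s^{q-1}}$) is a nonzero $x$-fixed vector, giving an element of order $pq$; your orbit-size remark simply spells out the distinctness of these cosets, which the paper asserts directly.
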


\begin{proof}
Let $s$ be an element of order $q$ in $S\setminus H$ and $w\in W\setminus \{1\}$. Then $1,s,\dots,s^{q-1}$ are different elements of a right transversal of $H$ in $S$. Hence $v=ww^s\dots w^{s^{q-1}}\in V\setminus \{1\}$ and $v$ commutes with $s$. So $G$ has an element of order $pq$, a contradiction.
\end{proof}

\begin{lemma}\label{Fittingp}
	Let $G$ be a finite solvable group and let $b$ be an element of $G$ of prime order $p$. If $|F(G)|$ is not multiple of $p$, then there is $x\in F(G)\setminus \{1\}$ such that $|bx|=p$.
\end{lemma}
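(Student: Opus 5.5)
The plan is to produce $x$ as a commutator-type element inside $F(G)\GEN{b}$ by applying \Cref{GSylowp} to the subgroup generated by the $F(G)$-conjugates of $b$. Write $F=F(G)$. Since $p\nmid |F|$, we have $b\notin F$. Because $G$ is solvable, $\C_G(F)\subseteq F$, so $b$ does not centralize $F$.

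First, consider $L=F\GEN{b}$. It is a group because $F$ is normal, and $F$ is a normal $p'$-Hall subgroup of $L$, so $\GEN{b}$ is a Sylow $p$-subgroup of $L$ of order $p$. Let
\[K=\GEN{b^y : y\in F}\le L.\]
Then $K$ is generated by elements of order $p$. It contains $b$, and $\GEN{b}$ is a Sylow $p$-subgroup of $K$ of order $p$. Being a non-trivial solvable group, $K$ is not perfect. So \Cref{GSylowp} applies to $K$ with $a=b$. By \Cref{GSylowp}\eqref{GpG'}, $K'$ is a $p'$-group, and by \Cref{GSylowp}\eqref{GpB},
\[K'=\GEN{c\in K : p\nmid |c| \text{ and } |bc|=p}.\]

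Next, every $p'$-element of $L$ lies in $F$, since $F$ is the normal Hall $p'$-subgroup of $L$. Hence every such generator $c$ lies in $F$. It therefore suffices to show $K'\neq 1$, because then some generator $c$ of $K'$ is non-trivial, and $x=c$ satisfies $x\in F\setminus\{1\}$ and $|bx|=p$.

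The main step is the contradiction argument for $K'\ne 1$. Suppose $K'=1$. Then $K=K'\rtimes\GEN{b}=\GEN{b}$, so every conjugate $b^y$ with $y\in F$ lies in $\GEN{b}$; that is, $F$ normalizes $\GEN{b}$. Then for $y\in F$ we get $(y,b)\in F\cap \GEN{b}=1$, using that $F$ is normal and $p\nmid|F|$. So $b$ centralizes $F$, contradicting $\C_G(F)\subseteq F$. Hence $K'\neq 1$, which gives the required $x$.
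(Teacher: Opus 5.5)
Your proof is correct, but it takes a different route from the paper's. The paper argues element-wise. It picks $y\in F$ with $(b,y)\neq 1$. If $|by|=p$ it takes $x=y$. Otherwise it splits $by=cz$ into commuting $p$- and $p'$-parts, notes that $z\in F$ because $z^p=(by)^p\in F$, and takes $x=yz^{-1}$, which is non-trivial because $(c,z)=1\neq(b,y)$.

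You instead apply \Cref{GSylowp} to the normal closure $K=\GEN{b^y : y\in F}$ of $b$ in $F\GEN{b}$. Since $F$ is the normal Hall $p'$-subgroup of $F\GEN{b}$, the generators of $K'$ supplied by \Cref{GSylowp}\eqref{GpB} are exactly candidates for $x$. You then rule out $K'=1$: it would force $F$ to normalize $\GEN{b}$, hence $(F,\GEN{b})\subseteq F\cap\GEN{b}=1$, contradicting $\C_G(F)\subseteq F$. You check the hypotheses of \Cref{GSylowp} correctly: $K$ is non-trivial and solvable, hence not perfect, and $\GEN{b}$ is a Sylow subgroup of $K$ of order $p$.

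The paper's argument is self-contained and uses only the normality of $F$, $\C_G(F)\subseteq F$, and the $p$/$p'$-decomposition of a single element. Yours imports a heavier lemma, which itself rests on Sylow's theorem. In exchange it is more structural: it shows that the elements $x\in F$ with $|bx|=p$ actually generate the non-trivial subgroup $K'$.

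The mechanism inside the proof of \Cref{GSylowp}\eqref{GpB} also gives a one-line proof that bypasses both arguments. For $y\in F$ with $(b,y)\neq 1$, the element $x=(b,y)=b^{-1}b^{y}$ lies in $F$ because $F$ is normal in $G$. It is non-trivial, and $bx=b^{y}$ has order $p$.
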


\begin{proof}
Let $F=F(G)$. By assumption $p$ does not divide the order of $F$ and hence $b\not\in \C_G(F)$. Therefore there is $y\in F$ such that $(b,y)\ne 1$. If $|by|=p$, then taking $x=y$, the result follows. Otherwise $|by|=pn$ with $n$ a divisor of $|F|$. So $p\nmid n$ and $by=cz$ with $|c|=p$, $|z|=n$ and $(c,z)=1$. Then $z^p=(by)^p\in F$, and hence $z\in F$. Therefore $c=byz^{-1}$ has order $p$. As $(b,y)\ne 1=(c,z)$, we have $y\neq z$ and it follows that $x=yz^{-1}$ satisfies the desired condition. 
\end{proof}

\begin{lemma}\label{auxiliarpq}
	Let $p$ and $q$ be two different primes, let $G$ be a finite $\{p,q\}$-group with a Sylow $p$-subgroup $\GEN{b}$ of order $p$. If $x$ and $y$ are   $q$-elements  with $y$ nontrivial such that $bx$ and $by$ have order $p$, then  $bx$ does not commute with $y$. 
\end{lemma}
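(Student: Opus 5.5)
The plan is to reduce to the case $x=1$ by conjugation, where the statement becomes obvious. If $x=1$ and $b$ commutes with the nontrivial $q$-element $y$, then $by$ is a product of commuting elements of coprime orders $p$ and $|y|>1$. So $|by|=p|y|\neq p$, contradicting $|by|=p$. Everything else is bookkeeping to justify the reduction.

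\textbf{Step 1: put $x$ and $y$ in $N=O_q(G)$.} By Burnside's $p^aq^b$ theorem $G$ is solvable. Let $\bar G=G/N$. Then $O_q(\bar G)=1$, so $F(\bar G)=O_p(\bar G)$, which is nontrivial. Since the Sylow $p$-subgroup has order $p$, we get $O_p(\bar G)=\GEN{\bar b}$. Because $\bar G$ is solvable, $\C_{\bar G}(F(\bar G))\subseteq F(\bar G)$, so $\bar G/\GEN{\bar b}$ embeds in $\Aut(C_p)$. Hence $\bar G=\GEN{\bar b}\rtimes \bar C$, where $\bar C$ is a $q$-group acting faithfully on $\GEN{\bar b}$.
\begin{itemize}
\item If $\bar x\neq 1$, then $\bar x$ acts nontrivially on $\GEN{\bar b}$, and $\bar b\bar x$ maps to the nontrivial element $\bar x$ of $\bar G/\GEN{\bar b}$. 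So its order is divisible by $q$, contradicting $|bx|=p$.
\item Hence $x\in N$, and by the same argument $y\in N$.
\end{itemize}
This step is the one that needs the most care: we must make sure $\bar b\bar x$ really has order divisible by $q$ whenever $\bar x\ne 1$.

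\textbf{Step 2: the order-$p$ elements of $bN$ are $N$-conjugates of $b$.} Let $K=N\GEN{b}$, in which $\GEN{b}$ is a Sylow $p$-subgroup. If $bz$ with $z\in N$ has order $p$, then $\GEN{bz}$ is a Sylow $p$-subgroup of $K$. By Sylow's theorem, and since $K=N\GEN b$, there is $m\in N$ with $\GEN{bz}=\GEN{b}^m$. So $bz=(b^k)^m$ for some $k$. Reducing modulo $N$ gives $b\equiv b^k$, so $k\equiv 1 \pmod p$ and $bz=b^m$.

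\textbf{Step 3: reduce to $x=1$.} Apply Step 2 to $z=x$ to get $bx=b^m$ with $m\in N$. Suppose, for a contradiction, that $bx$ commutes with $y$. Conjugate everything by $m^{-1}$, putting $y'=y^{m^{-1}}$, which is a nontrivial $q$-element of $N$.
\begin{itemize}
\item $b=(bx)^{m^{-1}}$ commutes with $y'$.
\item $(by)^{m^{-1}}=(bx\cdot x^{-1}y)^{m^{-1}}=b\,(x^{-1}y)^{m^{-1}}$.
\end{itemize}
It is cleaner to conjugate both hypotheses directly. Since $(by)^{m^{-1}}=b^{m^{-1}}y'$ and $b^{m^{-1}}=b\,[b,m^{-1}]$, write $x'=[b,m^{-1}]\in N$. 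Then $bx'$ and $bx'y'$ have order $p$, and $bx'$ is $(bx)$ conjugated to $b$ itself. So $x'$ corresponds to trivial, and we are in the case $x=1$: $b$ commutes with $y'\ne 1$ and $by''$ has order $p$ for the appropriate nontrivial $y''\in N$ with $y''=y'$.

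Rather than chase the notation above, the cleanest formulation is this. The hypotheses are invariant under simultaneous conjugation $(b,x,y)\mapsto(b^g,x^g,y^g)$, followed by rewriting $b^g=b\,[b,g]$. Choosing $g=m^{-1}$ turns $bx$ into $b$ and keeps $y$ a nontrivial $q$-element commuting with it. The final contradiction is then the one given in the first paragraph.
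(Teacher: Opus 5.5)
Your Steps 1 and 2 are correct, but Step 3 does not do what you claim, and Step 3 is where the whole content of the lemma lies. Conjugating by $m^{-1}$ sends $bx\mapsto b$ and $y\mapsto y'=y^{m^{-1}}$, so you do get that $b$ commutes with $y'$. However, it sends $by\mapsto b^{m^{-1}}y'=b\,[b,m^{-1}]\,y'$, not to $by'$. So after conjugation the element that commutes with $b$ is $y'$, while the element $z$ with $|bz|=p$ is $[b,m^{-1}]y'$. These coincide only if $[b,m^{-1}]=1$, i.e.\ $b^m=b$, i.e.\ $x=1$.

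Your ``cleanest formulation'' has the same flaw. Simultaneous conjugation produces a triple $(b^g,x^g,y^g)$ with $b^gx^g=b$, which is not a triple of the form $(b,1,\ast)$ relative to the fixed generator $b$. In fact no choice of $g$ can work: $(bx)^g=b$ forces $mg\in\C_G(b)$, while keeping $(by)^g$ of the form $b\,y^g$ forces $g\in\C_G(b)$. Together these give $m\in\C_G(b)$, i.e.\ $x=1$. So your argument only proves the trivial case $x=1$ (your first paragraph); for $x\neq 1$ nothing has been shown.

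The missing idea is a way to transfer the commutation from $bx$ to $b$ itself. For this you have to pass to a quotient in which $y$ becomes central in the $q$-part. The paper works in $P=Q\rtimes\GEN{b}$, the subgroup generated by the $p$-elements (so $x,y\in Q$). It picks $i$ with $y\in\Z_{i+1}(Q)\setminus\Z_i(Q)$ and sets $\bar P=P/\Z_i(Q)$. Then $\bar y$ and $\bar y^{\bar b}$ both lie in $\Z(\bar Q)$, so $\bar y=\bar y^{\bar b\bar x}=(\bar y^{\bar b})^{\bar x}=\bar y^{\bar b}$. Thus $\bar b$ commutes with $\bar y\neq 1$, while $|\bar b\bar y|$ divides $p$. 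This is exactly your first-paragraph contradiction, now in the quotient.

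This fits directly onto your Step 1. Since you already have $x,y\in N=O_q(G)$, the same argument with the upper central series of $N$ in place of $Q$ completes the proof. Steps 2 and 3 then become unnecessary.
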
\begin{proof}
	Suppose by contradiction that $bx$ commutes with $y$. Let $P$ be the subgroup of $G$ generated by all its $p$-elements. Then, by \Cref{GSylowp}, $P=Q\rtimes \GEN{b}$, where $Q$ is the Sylow $q$-subgroup of $P$. Let $i$ be the unique integer such that $y\in \Z_{i+1}(Q)\setminus \Z_{i}(Q)$, and take the quotient $\bar P= P/\Z_i(Q)= (Q/\Z_{i}(Q))\rtimes \GEN{\bar b}$. Then $\bar y=\bar y^{\bar b \bar x}=\bar y ^{\bar b}$, because $\bar y$ is central in $\bar Q$, and hence so is $\bar y^{\bar b}$. Thus $pq$ divides the order of $\bar b \bar y$, a contradiction.
\end{proof}

We finish this section with a recent result from \cite{Debon} which will be used in \Cref{Section-t}. 

\begin{theorem}\label{VS2} Let $p\ge 5$ prime, let $G$ be a non-trivial rational finite $2$-group and let $V$ be a finite dimensional faithful $\F_pG$-module with the eigenvector property. Then $p=5$ and there is a $2$-subgroup $K$ of the symmetric group $\Sigma_n$ such that $G\cong Q_8\wr K$, $\dim V=2n$ and  $V\rtimes G\cong \MM\wr K$.
\end{theorem}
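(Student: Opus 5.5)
The plan is to argue by induction on $\dim V$, working with the semidirect product $V\rtimes G$. First I would extract arithmetic information. Fix a generator $r$ of $\F_p^\times$ and any $v\neq 0$. The eigenvector property gives a $g\in G$ with $vg=rv$. Then $r$ is an eigenvalue of the $2$-element $g$, so $p-1$ is a power of $2$; that is, $p$ is a Fermat prime. Moreover $v$ is rational in $V\rtimes G$.

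Second, I would use Clifford theory to reduce to a primitive situation. Decompose $V$ into homogeneous components under a suitable normal subgroup. Alternatively, choose $H\le G$ minimal such that $V\cong \Ind_H^G(W)$ for an $\F_pH$-module $W$, in the spirit of \Cref{HAbelCic}. Write $L=H/\C_H(W)$. I would then show two things. First, $W$ is again a faithful $\F_pL$-module with the eigenvector property: take a vector of $V$ lying in $W$ and observe that an element realizing the eigenvalue $r$ on it must stabilise the summand $W$. Second, $L$ is again a rational $2$-group, which is where the rationality of $G$ enters. By \Cref{HAbelCic}, every normal abelian subgroup of $L$ is cyclic. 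The $2$-groups with this property are known (\Cref{StructureFED}): central products of an extraspecial $2$-group with a cyclic, dihedral, semidihedral or generalized quaternion group. Among these I would determine the rational ones and then test the eigenvector property on a faithful irreducible $\F_p$-module. For this I would use a counting argument: $V$ must equal $\bigcup_g V_g(r)$, and each $V_g(r)$ is a proper subspace. The aim is to show that only $L\cong Q_8$ with $p=5$ and $\dim W=2$ survives; in that case $W\rtimes Q_8\cong\MM$.

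Third, I would reassemble the global structure. Once the primitive constituent is identified as the natural $2$-dimensional module for $Q_8$ over $\F_5$, I would write $V=\bigoplus_{i=1}^n W_i$ as the sum of the conjugates of $W$, with $n=[G:H]$. The goal is to show that $Q=\C_G(V/W_1)\cap\dots$, more precisely the subgroup acting on each $W_i$ separately, is $\prod Q_8$. The kernel of the permutation action on $\{W_i\}$ then contains a base group $Q_8^n$, and $G/Q_8^n$ embeds in $\Sigma_n$ as a $2$-group $K$. The splitting $G\cong Q_8\wr K$ should follow from rationality together with \Cref{FrobeniusFaithful}-type arguments applied to the Frobenius groups $W_i\rtimes Q_8$. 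From this, $V\rtimes G\cong\MM\wr K$ and $\dim V=2n$ are immediate.

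I expect two steps to be the main obstacle. The first is the primitive case: excluding $p\ge 17$ and all rational $2$-groups other than $Q_8$. Since rational $2$-groups can have large exponent, the argument cannot rely only on eigenvalue orders and needs a genuine counting estimate on $|\bigcup_g V_g(r)|$ versus $|V|$. The second is showing that the base of the induced structure is exactly a direct product of copies of $Q_8$, rather than a subdirect product. Here I would try first to use the eigenvector property on vectors with support in two summands $W_i\oplus W_j$, forcing independent actions on the two summands.
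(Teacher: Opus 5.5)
The paper does not prove this statement; it quotes it from \cite{Debon}, so I am judging your plan on its own. Your first step, and the transfer of the eigenvector property from $V$ to $W$, are fine. The plan breaks at the claim that $L=H/\C_H(W)$ is again a rational $2$-group.

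\textbf{The rationality claim is false.} Rationality passes to quotients of $G$, not to subgroups such as $H$, and the claim already fails for $\MM$. Take $G=Q_8=\GEN{i,j}$ and $V=\F_5^2$ the natural module. Then $V\cong\Ind_{\GEN{i}}^{Q_8}(W)$, where $W$ is one-dimensional and $i$ acts on it as multiplication by $2$. Indeed, the induced module is faithful of dimension $2$. It is therefore irreducible, since $Q_8$ acts on one-dimensional modules through $C_2\times C_2$, and so it is the natural module. Hence a minimal $H$ is cyclic of order $4$, with $\dim W=1$ and $L\cong C_4$, which is not rational. Likewise, for $Q_8\wr K$ the module $V$ is induced from a one-dimensional module of a subgroup of index $2n$ that acts through $C_4$.

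\textbf{Consequences for your reduction.} Your reduction can never produce ``$L\cong Q_8$, $\dim W=2$'', because the natural $\F_5Q_8$-module is imprimitive. Worse, everything you actually retain about $L$ is also satisfied by $C_{16}$ acting on $\F_{17}$ through a generator of $\F_{17}^\times$: faithfulness, every normal abelian subgroup cyclic, and the eigenvector property. So neither $p\ge 17$ nor non-rational constituents can be excluded at the level of $L$.

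\textbf{What is missing.} You need a genuinely global use of the rationality of $G$. One example: $g$ is conjugate to $g^k$ for every odd $k$, which forces $\dim V_g(r)\le 2\dim V/(p-1)$. Another option is a decomposition whose local groups are quotients of $G$. You also need the structural step that is the real content of the theorem:
\begin{enumerate}
\item the one-dimensional summands $Wt$ pair up into two-dimensional blocks on which $G$ induces $Q_8$;
\item the subgroup acting blockwise is the full product $Q_8^n$, not a subdirect product.
\end{enumerate}
Your third step presupposes two-dimensional primitive $Q_8$-blocks that your reduction does not deliver, so it has nothing to start from.

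\textbf{Two smaller points.}
\begin{enumerate}
\item \Cref{HAbelCic} requires $V$ to be irreducible. The theorem covers reducible $V$, e.g.\ $Q_8\times Q_8=Q_8\wr K$ with $K=1\le\Sigma_2$.
\item A $2$-group all of whose normal abelian subgroups are cyclic is cyclic, dihedral of order at least $16$, semidihedral or generalized quaternion. Extraspecial central products arise only under the weaker hypothesis on characteristic abelian subgroups. So if $L$ were rational, you would get $L\cong Q_8$ almost immediately; the difficulty is not a counting problem in the primitive case, as you anticipated.
\end{enumerate}
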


\section{Technical results for \cut groups}\label{SectionTechnicalCut}

In this section we collect technical lemmas about \cut groups. We start collecting in the following lemma the statements of Lemmas~2.6, 3.4, 3.5 and 4.3 in \cite{BKMdR}.

\begin{lemma}\label{CutOrders1}
Let $G$ be a finite $\cut$ group and let $p$ and $q$ be two different prime divisors of $|G|$.
\begin{enumerate}
	\item\label{5rational} If $g$ is a $p$-element of $G$ and $p\equiv 1 \mod 4$, then $g$ is rational in $G$.
	\item\label{Gpexponentpo4} If $G_p$ is abelian, then the exponent of $G_p$ divides $p$ or $4$.
	\item\label{cyclicsylow} If $G_p$ is normal in $G$ and $G$ does not have an element of order $pq$, then $G_q$ is either quaternion of order $8$ or cyclic of order dividing $q$ or $4$. 
	\item\label{noelements4p} If $q=2$ and $G_2$ is cyclic, then $G$ has no elements of order $4p$. 
	\item\label{noelements2p} If $q=2$, $p\equiv 1 \mod 4$ and $G_2$ is either cyclic or quaternion of order $8$, then $G$ has no elements of order $2p$.
	\item\label{noelements37} If $G_3$ is cyclic, then $G$ has not elements of order $3\cdot 7$.
\end{enumerate}
\end{lemma}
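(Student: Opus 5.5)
This lemma merely collects Lemmas~2.6, 3.4, 3.5 and 4.3 of \cite{BKMdR}, so the honest proof is a citation. Still, here is how I would argue each item from the characterization of \cut groups by inverse semi-rationality. The key fact throughout is the index criterion recalled in \Cref{SectionNotation}: for $g\in G$, the image of $\N_G(\GEN{g})$ in $\Aut(\GEN{g})$ has index at most $2$, and index exactly $2$ only when $g$ is not real.

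For \eqref{5rational}, let $g$ have order $p^k$ with $p\equiv 1\bmod 4$. Then $\Aut(\GEN{g})$ is cyclic of order $p^{k-1}(p-1)$, which is divisible by $4$. The image $\N_G(\GEN{g})/\C_G(g)$ has index at most $2$, so it is either all of $\Aut(\GEN{g})$ or the unique subgroup of index $2$. That subgroup contains the unique involution of the cyclic group $\Aut(\GEN{g})$, namely inversion, because $4$ divides the order. Hence $g$ is real, and inverse semi-rational plus real gives rational.

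For \eqref{Gpexponentpo4}, take $g\in G_p$ of maximal order $p^k$. By Burnside's fusion argument for abelian Sylow subgroups, $\N_G(\GEN{g})/\C_G(g)$ embeds in $\N_G(G_p)/\C_G(G_p)$, which is a $p'$-group. Its index in $\Aut(\GEN{g})$ is at most $2$, so the $p$-part of $p^{k-1}(p-1)$ must be killed. For $p$ odd this gives $k=1$. For $p=2$ the automorphism group has order $2^{k-1}$ and the image has odd order, so $2^{k-1}\le 2$, giving $k\le 2$.

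For \eqref{cyclicsylow}, the hypotheses say $G_q$ acts fixed-point-freely on $G_p$. Hence $G_q$ is a Frobenius complement, so it is cyclic or generalized quaternion. If $G_q$ is cyclic of order $q^k$, then $\N_G(G_q)=G_q\times \C_{G_p}(G_q)\cdot$(stuff) and the fixed-point-free action forces $\N_G(G_q)=G_q$ up to $p'$ parts. A Frattini/Burnside argument then shows the fusion of a generator of $G_q$ is controlled by a $q$-group. Together with the index-$2$ condition this forces $q^{k-1}(q-1)\le 2$, i.e.\ $|G_q|\in\{q,4\}$ or $q=2$ with order at most $4$. For generalized quaternion $Q_{2^n}$, an element of order $2^{n-1}$ similarly needs $2^{n-2}$ automorphisms coming from a $2$-group normalizer. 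Since $Q_{2^n}$ only inverts, this forces $n=3$.

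For \eqref{noelements4p}, suppose $g=xy$ with $|x|=4$ and $|y|=p$. Then $\Aut(\GEN{g})\cong \Aut(\GEN{x})\times\Aut(\GEN{y})$, and an element normalizing $\GEN{g}$ acts on both factors. Because $G_2$ is cyclic, $G$ is $2$-nilpotent, so elements inducing the nontrivial automorphism of $\GEN{x}$ must be $2$-elements commuting trivially with $x$, which is impossible. Hence $x$ is never conjugated to $x^{-1}$ inside the normalizer. The image is then contained in $1\times\Aut(\GEN{y})$, of index at least $2$. Non-reality of $g$ forces $\Aut(\GEN{y})$ fully realized, but then some element maps $x\mapsto x$ and $y\mapsto y^{-1}$, and combining this with the index-$2$ constraint gives the contradiction.

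Items \eqref{noelements2p} and \eqref{noelements37} are similar. In \eqref{noelements2p}, by \eqref{5rational} the $p$-part must be rational, while the $2$-part constraint coming from a cyclic or $Q_8$ Sylow subgroup prevents the rational action from extending to $g=xy$ with $|x|=2$. In \eqref{noelements37}, $\Aut(C_{21})$ has order $12$ and a cyclic $G_3$ restricts the fusion so the image has index larger than $2$. The main obstacle is these fusion-control steps, and in practice I would simply cite \cite{BKMdR}.
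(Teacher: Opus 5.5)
The paper gives no argument here. The lemma only collects the statements of Lemmas~2.6, 3.4, 3.5 and 4.3 of \cite{BKMdR}, so your fallback citation is exactly what the paper does. Your sketches of \eqref{5rational} and \eqref{Gpexponentpo4} are correct. The sketches of the remaining items would not survive as proofs.

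In \eqref{cyclicsylow} the cyclic case is wrong as written. If $g$ generates $G_q$, then $\N_G(\GEN{g})=\N_G(G_q)$ and $G_q\le \C_G(g)$. So the automorphisms induced on $\GEN{g}$ form a $q'$-group, not a $q$-group. Your inequality $q^{k-1}(q-1)\le 2$ would also exclude every $q\ge 5$, contradicting the conclusion you then state. The cyclic case is simply \eqref{Gpexponentpo4} applied to the abelian group $G_q$. Your quaternion argument is fine once you note two things. First, a Sylow $2$-subgroup of $\N_G(\GEN{x})$ containing $x$ lies in a conjugate of $Q_{2^n}$. Second, for $n\ge 4$, $\GEN{x}$ is the unique cyclic subgroup of index $2$ there, so only $x\mapsto x^{\pm 1}$ is induced.

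In \eqref{noelements4p} there is a genuine gap. Once you know $x=g_2$ is not real, the image of $\N_G(\GEN{g})$ in $\Aut(\GEN{g})$ is contained in, hence equal to, $1\times \Aut(\GEN{y})$. This has index exactly $2$ and $g$ is not real, which is fully compatible with inverse semi-rationality. So the index count gives no contradiction. For instance, in $C_3\rtimes C_8=\GEN{y}\rtimes\GEN{h}$ with $y^h=y^{-1}$, the element $h^2y$ of order $12$ is inverse semi-rational.

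The missing input is \eqref{Gpexponentpo4}. Take a $2$-element $h$ with $x^h=x$ and $y^h=y^{-1}$, namely an odd power of a preimage of $(1,-1)$. Since $G_2$ is cyclic, $|G_2|\le 4$, so $\GEN{x}$ is a Sylow $2$-subgroup and $\GEN{x,h}=\GEN{x}$. Then $h$ centralizes $y$, a contradiction.

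Items \eqref{noelements2p} and \eqref{noelements37} are only gestured at, and they need the same mechanism.
\begin{itemize}
\item For \eqref{noelements37}: every subgroup of index at most $2$ in $\Aut(C_{21})\cong C_2\times C_6$ contains the subgroup of order $3$. That subgroup is realized by a $3$-element fixing $g_3$ and acting nontrivially on $g_7$. As $|G_3|=3$ by \eqref{Gpexponentpo4}, this element lies in $\GEN{g_3}$, a contradiction.
\item For \eqref{noelements2p}: $\Aut(\GEN{g})\cong \Aut(\GEN{g_p})$ is cyclic of order divisible by $4$. So the argument of \eqref{5rational} makes $g$ itself rational; rationality of $g_p$ alone is not what is needed. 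This yields a $2$-element $h\in \C_G(g_2)$ inducing an automorphism of order at least $4$ on $\GEN{g_p}$. Since $G_2$ is $Q_8$ or cyclic of order at most $4$, the involution $g_2$ of $\GEN{g_2,h}$ equals $h^2$. But $h^2$ does not centralize $g_p$, a contradiction.
\end{itemize}
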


\begin{lemma}\label{G2=Q8} Let $G$ be a finite \cut group with    a Sylow subgroup $G_2$ isomorphic to $Q_8$. Let $a\in G$ and let $p$ be an odd prime integer.
	\begin{enumerate}
		\item\label{4pnotrational} If the order of $a$ is $4p^n$ with $n\ne 0$, then every $2$-element of $\N_G(\GEN{a})$ commutes with $a_p$. 
		\item\label{No357-437} The order of $a$ is neither $3\cdot 5\cdot 7$ nor $4\cdot 3\cdot 7$.
		\item\label{Q8commutes} If $V$ is a normal $p$-subgroup of $G$, then $\C_{G_2}(V)$ is either $1$ or $G_2$.
	\end{enumerate}
\end{lemma}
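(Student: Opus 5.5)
The three parts are handled separately. Part \eqref{4pnotrational} is the main step; \eqref{No357-437} is a counting argument in $\Aut(\GEN{a})$, and \eqref{Q8commutes} reduces to \eqref{4pnotrational}. Throughout, write $N=\N_G(\GEN{a})$ and $C=\C_G(a)$, so that $N/C$ embeds in $\Aut(\GEN{a})$. Since $G$ is \cut, the image of $N/C$ has index at most $2$ in $\Aut(\GEN{a})$. If the index is exactly $2$, the image does not contain the inversion map.

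For \eqref{4pnotrational}, write $a=a_2a_p$ with $|a_2|=4$, so that
\[\Aut(\GEN{a})\cong \Aut(\GEN{a_2})\times \Aut(\GEN{a_p}) \cong C_2\times C_{p^{n-1}(p-1)}.\]
Its Sylow $2$-subgroup is $C_2\times C_{2^k}$ with $k\ge 1$.

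First, I will show that the Sylow $2$-subgroup of $\C_G(a_2)$ is $\GEN{a_2}$. Any $2$-subgroup $P\le \C_G(a_2)$ gives a $2$-group $\GEN{P,a_2}$, which lies in a conjugate of $Q_8$. In $Q_8$ the centralizer of an element of order $4$ is the cyclic subgroup it generates, so $P\subseteq\GEN{a_2}$. Consequently every $2$-element of $N$ lies in a subgroup of order at most $8$ containing $\GEN{a_2}\subseteq C$. Hence the $2$-part of $|N/C|$ is at most $2$.

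Index at most $2$ in a group whose $2$-part has order $2^{k+1}$ then forces $k=1$ and index exactly $2$. So the image of the $2$-part of $N/C$ is a subgroup of order $2$ of $C_2\times C_2$ not containing $(-1,-1)$.

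Now let $x\in N$ be a $2$-element not commuting with $a_p$. Its image is nontrivial in the second coordinate. Since $(-1,-1)$ is excluded, the image must be $(1,-1)$, so $x$ centralizes $a_2$. By the first step, $x\in\GEN{a_2}$, so $x$ commutes with $a_p$, a contradiction.

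For \eqref{No357-437} the same counting works, as follows.
\begin{itemize}
\item If $|a|=105$, then the $2$-part of $\Aut(\GEN{a})\cong C_2\times C_4\times C_6$ has order $32$, so $N/C$ would need a $2$-part of order at least $16$. But the image of a $2$-subgroup of $N$ is an abelian quotient of a subgroup of $Q_8$, hence has order at most $4$.
\item If $|a|=84$, then $\Aut(\GEN{a})\cong C_2\times C_2\times C_6$ has $2$-part of order $8$, so $N/C$ would need a $2$-part of order at least $4$. But $a_2\in C$ bounds this by $2$, as in \eqref{4pnotrational}.
\end{itemize}

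For \eqref{Q8commutes}, let $K=\C_{G_2}(V)$, which is normal in $G_2$ because $G_2$ normalizes $V$. Suppose $1\ne K\ne G_2$. Then $z\in K$, where $\GEN{z}=Z(G_2)$. Hence $\bar Q=G_2/\GEN{z}\cong C_2\times C_2$ acts on $V$, and since $K\neq G_2$ this action is nontrivial.

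The aim is to find an element $i\in G_2$ of order $4$ and $1\ne v\in \C_V(i)$ with $v^j=v^{-1}$ for some $j\in G_2\setminus\GEN{i}$. Then $a=iv$ has order $4p^m$ with $m\ge 1$, and $a^j=i^{-1}v^{-1}=a^{-1}$. Thus $j$ is a $2$-element of $\N_G(\GEN{a})$ not commuting with $a_p=v$, contradicting \eqref{4pnotrational}.

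To find $i$ and $v$, pass to $W=V/\Phi(V)$. By coprime action the kernel of the action of $\bar Q$ on $W$ equals the kernel $K/\GEN{z}$ of its action on $V$, so $\bar Q$ acts nontrivially on $W$. Decompose $W$ into eigenspaces for the characters of $\bar Q$. Since the action is nontrivial, some nontrivial character occurs. Its kernel has the form $\GEN{i}/\GEN{z}$ for some $i$ of order $4$; relabelling the elements of $Q_8$ if necessary, we may choose $j$ of order $4$ with $j\notin\GEN{i}$ acting nontrivially on $C_W(i)$. By coprime action, $\C_V(i)$ maps onto $C_W(i)$, so $j$ acts on $\C_V(i)$ as a nontrivial automorphism of order $2$ (recall $j^2=z$ centralizes $V$). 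Choose $u\in\C_V(i)$ with $u^j\ne u$ and set $v=u^{-1}u^j\neq 1$. Then $v\in\C_V(i)$ and $v^j=v^{-1}$, as required.

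I expect the delicate step to be the bookkeeping in \eqref{4pnotrational}. Specifically, it is ruling out that a $2$-element acts only on the $p$-part, and this needs the observation that the Sylow $2$-subgroup of $\C_G(a_2)$ is exactly $\GEN{a_2}$.
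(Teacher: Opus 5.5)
Your proof is correct. Part (1) is essentially the paper's argument, while (2) for order $84$ and (3) take a different route.

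In (1), both arguments rest on the fact that a $2$-subgroup centralizing $a_2$ lies in $\GEN{a_2}$. The paper shows directly that an offending $2$-element $b$ inverts both $a_2$ and $a_p$. Hence $a$ is real, hence rational, which would require a $2$-element centralizing $a_2$ and inverting $a_p$. You recast the same dichotomy as a count in $\Aut(\GEN{a})$. As a bonus, this shows that the image of $\N_G(\GEN{a})/\C_G(a)$ has index exactly $2$ in $\Aut(\GEN{a})$ and that $p\equiv 3 \pmod 4$.

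For $|a|=84$, the paper works with an element $g$ satisfying $(a^5)^g=a^{\pm 1}$ and then with the centralizer of $a_3a_7$. Your observation is shorter and reuses (1): since $a_2\in \C_G(a)$, the $2$-part of $|\N_G(\GEN{a})/\C_G(a)|$ is at most $2$, while the \cut property demands at least $4$.

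In (3), the paper descends the Frattini series $V_i$ and picks the last layer on which $G_2$ acts nontrivially. It applies (1) in the quotient $G/V_{i+1}$, then proves by hand (the $z^4=1$ computation) the special case of Burnside's theorem it needs. You instead cite two coprime-action facts for a $p'$-group $A$ acting on $V$: namely $\C_A(V/\Phi(V))=\C_A(V)$ and $\C_{V/\Phi(V)}(A)=\C_V(A)\Phi(V)/\Phi(V)$. You then build the inverted element $v=u^{-1}u^j$ inside $V$, so (1) is applied in $G$ itself. The paper's route is self-contained; yours is shorter at the price of quoting these standard results.

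Two small slips:
\begin{enumerate}
\item $|\Aut(C_{105})|=48$, so its $2$-part has order $16$, not $32$. This is harmless, since index at most $2$ still forces a $2$-part of order at least $8>4$.
\item In (1), saying that each $2$-element of $\N_G(\GEN{a})$ lies in a subgroup of order at most $8$ containing $\GEN{a_2}$ only bounds element orders in $\N_G(\GEN{a})/\C_G(a)$. The bound on the $2$-part should be stated for a Sylow $2$-subgroup $P$ of $\N_G(\GEN{a})$. Such a $P$ contains the normal subgroup $\GEN{a_2}\subseteq \C_G(a)$ and has order at most $8$, so $|P\C_G(a)/\C_G(a)|\le 2$.
\end{enumerate}
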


\begin{proof} 
	\eqref{4pnotrational} 
By means of contradiction, suppose that  $|a|=4p^n$ with $n\ge 1$ and  $b$ is a $2$-element of $\N_G(\GEN{a})$ not commuting with $a_p$.   In particular, $b\in \N_G(\GEN{a_2})\setminus \GEN{a_2}$, so  $\GEN{a_2,b}\cong Q_8$ and $a_p^b\in \GEN{a_p}\setminus \{a_p\}$. Moreover, $a_p^{b^2}=a_p^{a_2^2}=a_p$, so $a_p^b=a_p^{-1}$. Then  $a^b=a^{-1}$ and   $G$ is \cut, so  $a$ is rational in $G$. Hence, there exists a $2$-element $c\in G$ such that $a_2^c=a_2$ and $a_p^c=a_p^{-1}$. The latter implies that $c\not \in \GEN{a_2}$, while the former implies that $\GEN{a_2,c}=\GEN{a_2}$, since it is an abelian subgroup of $Q_8$ and $|a_2|=4$. This yields the desired contradiction.

	\eqref{No357-437} If $|a|=3\cdot 5\cdot 7$, then $\N_G(\GEN{a})/\C_G(a)$ is isomorphic to a subgroup of index at most $2$ in $C_2\times C_4\times C_6$ while a Sylow $2$-subgroup of $\N_G(\GEN{a})/\C_G(a)$ must be a subquotient of $Q_8$, which is not possible.
	
	Suppose that $|a|=4\cdot 3\cdot 7$. As $a$ is inverse semi-rational in $G$, there exists $g\in G$ such that $(a^5)^g=a$ or $(a^5)^g=a^{-1}$. The former implies $a_2^g=a_2$ and $a_3^g=a_3^{-1}$. Hence, $\GEN{a_2,g_2}$ is isomorphic to an abelian subgroup of $Q_8$ and $|a_2|=4$. Thus $\GEN{a_2,g_2}=\GEN{a_2}\cong C_4$. This is not possible since $a_2$ and $a_3$ commute while $a_3^g=a_3^{-1}$ and hence $a_3^{g_2}=a_3^{-1}$. 
	Then $(a^5)^g=a^{-1}$ and so  $a_2^g=a_2^{-1}$, $a_3^g=a_3$ and $a_7^g=a_7^{4}$. Hence, $\GEN{a_2,g_2}\cong Q_8$ with $a_3^{g_2}=a_3$ and $a_7^{g_2}=a_7$. In particular, $\C_G(a_3a_7)$ contains a Sylow $2$-subgroup of $G$, a contradiction, since $a_3a_7$ is semi-rational in $G$. 
	
	\eqref{Q8commutes} 	
	Suppose that    $\C_{G_2}(V)$ is neither $1$ nor $G_2$. 
	Consider the series $V_0=V> V_1> \dots > V_k=1$ with $V_i=\Phi(V_{i-1})$, the Frattini subgroup of $V_{i-1}$. Note that $V_i$ is normal in $G$, as so is $V$ and $V_i$ is characteristic in $V$.
	Let $i$ be maximal with   $G_2\ne \C_{G_2}(V_i)$. 
	Then $i<k$ and $G_2\subseteq \C_G(V_{i+1})$. 
	Moreover, conjugation induces an action of $G$ on $W=V_i/V_{i+1}$. As $W$ is $p$-elementary abelian, we may consider $W$ as an $\F_p G$-module.
	Since $\C_{G_2}(W)\ne 1$, $W$ is an $\F_p(C_2\times C_2)$-module and hence, each simple $\F_pG_2$-module of $W$ is linear. 
	We claim that $G_2\subseteq \C_G(W)$. Otherwise,
	$W$ has a simple submodule $M=\GEN{mV_{i+1}}$ such that $\C_{G_2}(M)$ is cyclic of order $4$, say generated by $x$. Then the image $g$ of $xm$ in $G/V_{i+1}$ has order $4p$. Since the image of $G_2$ in $G/V_{i+1}$ normalizes $\GEN{g}$ we obtain the contradiction $G_2\subseteq \C_{G}(M)$, by  \eqref{4pnotrational}.
	So $G_2\subseteq \C_G(W)$. Therefore, for every $v\in V_i$ and every $g\in G_2$, $v^g=vz$ for some $z\in V_{i+1}$. Recall that $G_2\subseteq \C_G(V_{i+1})$. Then $v=v^{g^4}=vz^4$, so that $z^4=1$ and so $z=1$. Thus $G_2\subseteq \C_G(V_i)$, a contradiction.
\end{proof}

\begin{lemma}\label{V5or7}
Suppose that $G$ is a finite solvable \cut group such that $\GK(G)$ has four edges and for every non-trivial normal subgroup $N$ of $G$ we have $\GK(G/N)\ne \GK(G)$. Let $V$ be a non-trivial normal $p$-subgroup of $G$. Then $p\in \{5,7\}$ and $V$ is an elementary abelian Sylow $p$-subgroup of $G$. In particular, $V$ is the unique non-trivial normal $p$-subgroup of $G$.
\end{lemma}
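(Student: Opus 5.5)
The plan is to exploit the minimality hypothesis by passing to a quotient $G/N$ with $N\le V$ a suitable non-trivial normal subgroup of $G$, and showing that $\GK(G/N)=\GK(G)$ unless $V$ has the required shape. By the main theorem of \cite{BKMdR}, $\GK(G)$ is one of the three graphs on $\{2,3,5,7\}$ with four edges: the one in \Cref{Main}, (s) or (t). Thus the vertex sets are always $\{2,3,5,7\}$ and the edges are explicit.

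First observe what can change when we factor out a normal $p$-subgroup $N$. If $g\in G$ has order $m$, then $gN$ has order $m/p^k$ for some $k$. Hence every vertex $q\ne p$ and every edge $q-r$ with $p\notin\{q,r\}$ survives in $G/N$. So $\GK(G/N)\ne\GK(G)$ forces one of two things:
\begin{itemize}
\item[(a)] every $p$-element lies in $N$, i.e.\ $N$ contains a Sylow $p$-subgroup; or
\item[(b)] for some $q$ with $p-q$ an edge, every element of order $pq$ has its $p$-part in $N$.
\end{itemize}

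\textbf{Step 1: $V$ is a Sylow $p$-subgroup.} Take $N=V$. Suppose $V$ is not Sylow, so (b) holds for some $q$. We aim for a contradiction. Let $x$ be a $q$-element centralizing a $p$-element outside $V$, or at least a $p$-element of $G/V$. The coprime action fact $\C_{V/M}(x)=\C_V(x)M/M$, applied along a chief series inside $V$, should let us lift a commuting pair in $G/V$ to an element of order $pq$ whose $p$-part is outside $V$. This would give a contradiction, so $V$ is Sylow.

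\textbf{Step 2: $V$ is elementary abelian.} Apply the same reasoning with $N=\Phi(V)$, or with $N$ the last non-trivial term of the series $V_i=\Phi(V_{i-1})$ used in the proof of \Cref{G2=Q8}. If $V$ is not elementary abelian, then (a) fails for this $N$, so (b) must hold. We will rule this out using coprime action: a $q$-element $y$ with $\C_V(y)\ne 1$ also has $\C_{V/N}(y)\ne 1$, unless $\C_V(y)\subseteq N$. In the bad case we will replace $y$ by a suitable element and use $\C_V(y)\not\subseteq\Phi(V)$ whenever $\C_{V/\Phi(V)}(y)\neq 1$. Once $V$ is an elementary abelian Sylow $p$-subgroup, uniqueness is immediate: any non-trivial normal $p$-subgroup is itself Sylow by the same argument, hence equal to $V$.

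\textbf{Step 3: $p\in\{5,7\}$.} Exclude $p=2$ and $p=3$ using the \cut lemmas. For $p=3$, a normal elementary abelian Sylow $3$-subgroup together with the edges at $3$ (namely $3-2$ and $3-5$ in every graph) should contradict \Cref{CutOrders1}\eqref{cyclicsylow}--\eqref{noelements2p}. The idea is that the edge $2-5$ or $2-7$ forces the Sylow $2$-subgroup to be cyclic or $Q_8$, and then \eqref{noelements2p} kills the edge $2-5$, or the $3$-part interacts badly. For $p=2$, a normal elementary abelian Sylow $2$-subgroup makes $G/V$ of odd order. The odd-order elements acting on $V$ together with the edges $2-3$ and $2-7$ should then force elements of order $3\cdot 7$ with a cyclic Sylow $3$-subgroup, contradicting \Cref{CutOrders1}\eqref{noelements37}, or a similar conflict with \eqref{Gpexponentpo4}.

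I expect the main obstacle to be Steps 1--2, namely lifting an element of order $pq$ from $G/N$ to $G$ with $p$-part outside $N$. Coprime action handles the $q$-element side, but the $p$-part need not be liftable in a way that centralizes it. If the direct argument fails, I would fall back on the graph structure: the vertices $5$ and $7$ have few neighbours, and \Cref{CutOrders1}\eqref{5rational} gives rationality of $5$-elements, which should restrict the possible bad configurations case by case.
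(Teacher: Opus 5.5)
There is a genuine gap in Steps 1 and 2. In case (b) the edge $p-q$ is absent from $\GK(G/V)$. So no $q$-element centralizes a $p$-element outside $V$, and there is no commuting pair in $G/V$ to lift. What you would actually need is to push an element of order $pq$ of $G$ down to $G/V$. Coprime action cannot do that, because its $p$-part may lie in $V$.

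No local argument can exclude this, even among \cut groups. In $C_3\times\Sigma_3$ the normal non-Sylow subgroup $C_3\times 1$ kills the edge $2-3$. In $\mathrm{SL}(2,3)$, factoring out $\Phi(Q_8)$ also kills the edge $2-3$; this is precisely your ``bad case'' $\C_V(y)\subseteq\Phi(V)$.

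Step 3 is not an argument either. \Cref{CutOrders1}\eqref{cyclicsylow} only controls Sylow $q$-subgroups for $q$ \emph{not} adjacent to the normal Sylow prime. Since $2$ is adjacent to $3$ in all three candidate graphs, nothing forces $G_2$ to be cyclic or $Q_8$ when $p=3$. For $p=2$, the edge $3-7$ you want to exploit is absent from two of the three graphs.

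The missing idea is to apply the classification of \cite{BKMdR} to the quotient $G/V$, which is again a solvable \cut group. Take $V$ minimal normal, hence elementary abelian. If $V$ were not Sylow, then $\pi(G/V)=\pi(G)$, and $\GK(G/V)$ would be a proper subgraph of $\GK(G)$ on four vertices with at most three edges. But by the main theorem of \cite{BKMdR}, every graph on more than three vertices realized by a solvable \cut group has at least four edges. So $V$ is Sylow. Since any non-trivial normal $p$-subgroup contains a minimal normal one, it equals $V$.

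Then $\GK(G/V)$ has three vertices, so it contains the edge $2-3$ by \cite[Corollary~B]{BKMdR}. Hence $p\notin\{2,3\}$, i.e.\ $p\in\{5,7\}$. For $p=2$ there is also a quick direct argument: $G/V$ would be a \cut group of odd order containing elements of order $5$. These are rational by \Cref{CutOrders1}\eqref{5rational}, hence real, which is impossible in odd order.
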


\begin{proof}
	Without loss of generality we may assume that $V$ is a minimal normal $p$-subgroup of $G$. Then $V$ is an elementary abelian $p$-group. By assumption, $\GK(G/V)$ is properly contained in $\GK(G)$. If $\pi(G)=\pi(G/V)$, then $\GK(G/V)$ has less than four edges which is not possible by the main theorem of \cite{BKMdR}. Thus $\pi(G) \ne \pi(G/V)$ and so $V$ is a Sylow $p$-subgroup of $G$. Moreover, $p\in \{2,3,5,7\}$, by \cite{Bachle2018}, and $\GK(G/V)$ has three vertices. Then $\GK(G/V)$ contains the edge $2-3$ by \cite[Corollary~B]{BKMdR}. So $p\in \{5,7\}$.
\end{proof}

	\begin{lemma}\label{G2C2ornoelements21}
	Let $G$ be a finite solvable \cut group. If  $\pi(G)=\{2, 3, 7\}$ and $G_2$ and $G_7$ are cyclic, then either $G_2\cong C_2$ or $G$ has no elements of order $3\cdot 7$.
\end{lemma}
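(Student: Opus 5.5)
We argue by contradiction: assume $G_2\not\cong C_2$ and that $G$ has an element $a$ of order $3\cdot 7$.

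\textbf{Reductions.} By \Cref{CutOrders1}\eqref{Gpexponentpo4}, the cyclic Sylow subgroups satisfy $G_7\cong C_7$ and $G_2\cong C_4$. By \Cref{CutOrders1}\eqref{noelements37}, the existence of $a$ forces $G_3$ to be non-cyclic. By \Cref{CutOrders1}\eqref{noelements4p}, applied with $p=3$ and $p=7$, $G$ has no elements of order $12$ or $28$. Since $G_2$ is cyclic, Burnside's transfer theorem gives a normal $2$-complement, so $G=N\rtimes\GEN{u}$ with $|u|=4$ and $N$ a $\{3,7\}$-group with a Sylow $7$-subgroup $\GEN{a_7}$.

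\textbf{Automorphisms of $\GEN{a}$.} Let $\Gamma$ be the image of $\N_G(\GEN{a})$ in $\Aut(\GEN{a})\cong (\mathbb{Z}/3)^\times\times(\mathbb{Z}/7)^\times\cong C_2\times C_6$. Since $a$ is inverse semi-rational, $\Gamma\cdot\{\pm1\}=\Aut(\GEN{a})$, so $|\Gamma|\ge 6$. I will extract two consequences.
\begin{enumerate}
\item $\Gamma$ contains the unique subgroup of order $3$. It acts trivially on $a_3$ and with order $3$ on $a_7$. Taking $3$-parts gives a $3$-element $y\in\N_G(\GEN{a})$ centralizing $a_3$ and acting nontrivially on $\GEN{a_7}$.
\item $\Gamma$ contains an element acting as $-1$ on exactly one of $a_3$ and $a_7$, up to the $C_3$-part. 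Taking its $2$-part gives a $2$-element $z\in \N_G(\GEN{a})$ that inverts one of $a_3,a_7$ and centralizes the other.
\end{enumerate}
Since there are no elements of order $12$ or $28$, $z$ must be an involution. Because $G_2$ is cyclic, $z$ is conjugate to $u^2$. Replacing $a$ by a conjugate, we may take $z=u^2$, which is central in $\GEN{u}$.

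\textbf{Contradiction (main obstacle).} We must show that an element $u$ of order $4$ with $u^2$ centralizing $a_3$ or $a_7$ is impossible, and I expect this to be the hardest step. I would split into the two cases according to which of $a_3,a_7$ is centralized by $u^2$.

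If $u^2$ centralizes $a_7$, consider $\N_G(\GEN{a_7})=\N$. The quotient $\N/\C_G(a_7)$ embeds in $C_6$ and, by the cut property, has order $3$ or $6$. Then $u$ can be chosen, after conjugation, inside $\N$. Its image has order at most $2$, so $u^2\in\C_G(a_7)$. Because there are no elements of order $28$, $u$ itself must invert $a_7$. The plan is then to use $a_7u$: it has order $4$ and squares to $u^2$. Combining this with the $3$-element $y$, and with the rationality requirement on the element $u^2a_7$ of order $14$ (whose normalizer quotient must contain an element of order $3$ fixing $u^2$), should force $u$ to act on $\GEN{u^2a_7}$ in a way that \Cref{CutOrders1}\eqref{noelements4p} forbids.

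If instead $u^2$ centralizes $a_3$, I would work in $O_3(N)$, or in a $u$-invariant section $W$ of it that is elementary abelian. There $u^2$ acts with a nontrivial fixed point (the image of $a_3$), while $u$ has no nontrivial fixed points because there are no elements of order $12$. I would aim to combine this with \Cref{auxiliarpq} and \Cref{GSylowp}, applied to $\GEN{a_7}$ acting on the $3$-part of $N$ and using $y$, to produce a $u$-fixed $3$-element and hence an element of order $12$, giving a contradiction.

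Getting the precise interplay between $u$, $y$ and $a_7$ in these final steps is the part I expect to require the most care.
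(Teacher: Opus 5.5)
\textbf{Verdict: genuine gap.} Your reductions are correct, but the step that actually produces the contradiction is never carried out.

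Both cases of your final part are plans (``should force'', ``I would aim to''), and neither contains the mechanism that closes the proof. In Case A, the facts you list do not by themselves conflict with \Cref{CutOrders1}\eqref{noelements4p}: $u$ inverts $a_7$ and fixes $u^2$, and some $3$-element centralizing $u^2$ acts with order $3$ on $\GEN{a_7}$. One still has to control how these elements interact modulo the odd part of $\C_G(a_7)$, and you do not do this. In Case B you give no reason why a $u$-fixed $3$-element should exist in a section of $O_3(N)$. Moreover, the analysis of $\Gamma$, the involution $z$ and the case split are a detour. The contradiction needs only an element of order $7$ and the Sylow $2$-subgroup; the element of order $21$ is not really used.

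\textbf{The missing argument.} Work in $N=\N_G(\GEN{a_7})$ and $H=\C_G(a_7)$.
\begin{enumerate}
\item A Hall $\{2,7\}$-subgroup containing $\GEN{a_7}$ has order $28$, so its Sylow $7$-subgroup is normal. Hence $N$ contains an element $t$ of order $4$.
\item Since $G$ has no elements of order $28$, $t\notin H$. As $N/H$ embeds in $\Aut(C_7)\cong C_6$, $tH$ has order $2$, so $|N/H|$ is even.
\item Inverse semi-rationality of $a_7$ then forces $N/H\cong C_6$.
\item Again because there are no elements of order $28$, $H$ has no elements of order $4$, so $H$ has Sylow $2$-subgroup $\GEN{t^2}$ of order $2$.
\item Let $M$ be the normal $2$-complement of $H$; it is characteristic in $H$, hence normal in $N$. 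Then $N/M$ has order $12$, and $H/M$ is a normal, hence central, subgroup of order $2$ with cyclic quotient $N/H$.
\item So $N/M$ is abelian. Since $tM$ has order $4$, $N/M\cong C_{12}$. A preimage of a generator then yields an element of order $12$ in $G$, contradicting \Cref{CutOrders1}\eqref{noelements4p}.
\end{enumerate}
The paper reaches the same point by listing the groups of order $12$: $N/M$ must be $C_3\rtimes C_4$, which forces $N/H\cong S_3$, contradicting $N/H\cong C_6$.

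Your Case B also collapses immediately from the first step above. There $u^2$ inverts $a_7$, so $u^2\in N$, and it lies in a cyclic Sylow $2$-subgroup $\GEN{s}$ of $N$ of order $4$ with $s^2=u^2$. Then $s$ would induce an automorphism of order $4$ of $C_7$, which is impossible.
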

 
\begin{proof}
By means of contradiction, suppose that  $\pi(G)=\{2,3,7\}$, $G_2$ and $G_7$ are cyclic,  $G_2\not \cong C_2$ and $G$ has an element $g$ of order $3\cdot 7$. 
Let $N = \N_G(\GEN{g^3})$ and $H=\C_G(g^3)$.
By \Cref{CutOrders1}\eqref{Gpexponentpo4}, $G_2\cong C_4$ and $G_7\cong C_7$, so $|G_{\{2,7\}}|=28$ and, by the Sylow Theorem, $G_7 \trianglelefteq G_{\{2,7\}}$. 
Hence, $N$ contains an element of order $4$.
On the other hand, it follows from \Cref{CutOrders1}\eqref{noelements4p}, that $H$ has no element of order $4$.
Using that $g^3$ is inverse semi-rational it follows that $N/H\cong C_6$, so that $H$ has an element of order $2$ and hence $H_{3,7}$ is of index $2$ in $H$. 
This means that $[N : H_{3,7}] = 12$ and $H_{3,7}\trianglelefteq H$.
Since $H_{3,7}=\GEN{g^3}\times H_3$, $H_3$ is characteristic in $H_{3,7}$, and hence, $H_3$ is normal (and characteristic) in $H$. 
As $H$ is normal in $N$, $H_3\trianglelefteq N$. 
Since also $\GEN{g^3}\trianglelefteq N$, we have that $H_{3,7} \trianglelefteq N$ and $K:=N/H_{3,7}$ is $C_{12}$, $C_2 \times C_6$, $A_4$, $C_3\rtimes C_4$ or $D_{12}$.
By \Cref{CutOrders1}\eqref{noelements4p}, $G$ has no elements of order $12$ and hence $N/H_{3,7}$ is not $C_{12}$. 
As $N$ has an element of order $4$, $N/H_{3,7}$ is neither $C_2\times C_6, A_4$ nor $D_{12}$.
Thus $K\cong C_3 \rtimes C_4$, and hence $H/H_{3,7}$ is the unique subgroup of order $2$ of $K$ and  
	\[N/H=K/(H/H_{3,7})\cong S_3,\] 
in contradiction with $N/H\cong C_6$.
\end{proof}

\begin{lemma} \label{S3elementaryabelian} 
The following conditions are equivalent for a finite group $G$:
\begin{enumerate}
\item $G$ is solvable, \cut, has a normal Sylow subgroup of order $5$, and its GK-graph is contained in $(2-3-5)$.
\item $G=(\GEN{a}\times C)\rtimes \GEN{b}$ with $|a|=5$, $|b|=4$, $a^b=a^2$,  $C$ is elementary abelian $3$-group (possibly trivial) and $c^b=c^{-1}$ for every $c\in C$.
\end{enumerate}	
\end{lemma}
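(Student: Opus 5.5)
The proof splits into the two implications, and the reverse one is the easier, so I do it first.

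\emph{(2) implies (1).} Solvability is immediate. The group $\GEN{a}\times C$ is normal and abelian, and $b$ acts on it with $b^2$ inverting $a$ and centralizing $C$. Hence the $5$-elements commute only with $3$-elements, which gives the edge $5-3$ (when $C\ne 1$). Elements of order $10$ do not exist, because $\C_{\GEN{b}}(a)=1$. Elements of order $6$ exist, since $b^2$ centralizes $C$. Elements of order $2\cdot 5$, $4\cdot 3$ and $4\cdot 5$ cannot occur, because $b$ inverts $C$ and $\GEN{b}$ acts faithfully on $\GEN{a}$. So $\GK(G)\subseteq (2-3-5)$.

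For the \cut property I would check every element $g=ux$ with $u\in \GEN{a}\times C$ and $x\in\GEN{b}$.
\begin{itemize}
\item If $x=1$, then $g=a^ic$. Conjugation by powers of $b$ realizes all automorphisms of $\GEN{a}$ and, simultaneously, sends $c$ to $c^{\pm1}$. So $g$ is conjugate to $g^{k}$ or $g^{-k}$ for every $k$.
\item Elements with a nontrivial $2$-part are handled in the same way. Alternatively, one can note that $G/\GEN{a}\cong C\rtimes C_4$ and $\GEN{a}\rtimes\GEN{b}$ are \cut, and use the description of inverse semi-rationality via $\N_G(\GEN{g})/\C_G(g)$.
\end{itemize}

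\emph{(1) implies (2).} Let $A=\GEN{a}$ be the normal Sylow $5$-subgroup. Since $5$ is not adjacent to $2$, the group $G$ has no elements of order $10$. By \Cref{CutOrders1}\eqref{5rational}, $a$ is rational, so $G/\C_G(a)\cong \Aut(A)\cong C_4$. Because $5$ is not adjacent to $2$, $\C_G(a)$ has odd order, and by \Cref{CutOrders1}\eqref{cyclicsylow} we get $G_2\cong C_4=\GEN{b}$, with $b$ acting faithfully on $A$; after replacing $b$ by a suitable power, $a^b=a^2$. Then $\C_G(a)$ is a normal Hall $\{3,5\}$-subgroup, and $\pi(G)\subseteq\{2,3,5\}$. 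Hence $\C_G(a)=A\times C$, where $C$ is a normal Sylow $3$-subgroup (possibly trivial), and $G=(A\times C)\rtimes\GEN{b}$.

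It remains to show that $C$ is elementary abelian and inverted by $b$.
\begin{itemize}
\item There are no elements of order $12$ by \Cref{CutOrders1}\eqref{noelements4p}. So $b$ centralizes no nontrivial element of $C$, and $b^2$ acts on $C$ as an involution.
\item Suppose $b^2$ does not centralize $C$. Then $b^2$ inverts some $3$-element, and this element is then inverted by an element of $\C_G(a)\GEN{b^2}$. Using the \cut property of elements of order $15$, if $G$ has any, I would seek a contradiction; in any case this step needs care. The cleaner route is to show that every $c\in C$ is inverted by $b$. Indeed, $c$ is inverse semi-rational and $\C_{\GEN{b}}(c)=1$. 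Consider $g=cb^2$ when $b^2$ centralizes $c$; this element of order $6$ forces structure.
\item I would show that $b^2$ centralizes $C$ as follows. Let $C_0=\C_C(b^2)$ and $C_1=[C,b^2]$. The elements of $C_1$ are inverted by $b^2$. Using coprime action and rationality arguments with elements of order $4$ acting on $C_1$, one gets a $\GEN{b}$-module whose $3$-elements are fixed by no $2$-element. Since the elements of $C_1$ are inverted, the $C_4$ action on $C_1/\Phi$ has eigenvalues $\pm i$, which forces no fixed points but conflicts with the eigenvalue pattern the \cut property requires. I expect this to be the main obstacle.
\end{itemize}

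Once $b$ acts on $C$ as a fixed-point-free automorphism of order $2$ (so $b^2$ centralizes $C$), $b$ acts as $c\mapsto c^{-1}$. Since $c\mapsto c^{-1}$ is then an automorphism, $C$ is abelian. \Cref{CutOrders1}\eqref{Gpexponentpo4} then gives exponent $3$, so $C$ is elementary abelian.
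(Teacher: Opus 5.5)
Most of your outline is correct. The reduction to $G=(\GEN{a}\times C)\rtimes\GEN{b}$ with $a^b=a^2$ and $C$ the normal Sylow $3$-subgroup works, as does the remark that $b$ fixes no nontrivial element of $C$. Your closing step is also right: a fixed-point-free automorphism of order $2$ inverts everything, so $C$ is abelian, and \Cref{CutOrders1}\eqref{Gpexponentpo4} then gives exponent $3$. That is a valid shortcut for the paper's argument with elements of order $45$. The converse implication is fine too.

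\textbf{The gap.} You never prove the central step, that $b^2$ centralizes $C$ (equivalently, that $b$ inverts $C$). Your second bullet stops at ``this step needs care''. Your third bullet rests on a false claim: for non-abelian $C$ the elements of $[C,b^2]$ need not be inverted by $b^2$, and abelianness is exactly what you are trying to prove. The promised eigenvalue contradiction also cannot come from $C\rtimes\GEN{b}$ alone. Take $C_3^2\rtimes C_4$ with $b$ acting with eigenvalues $\pm i$: it is \cut, has no elements of order $12$, and $b^2$ inverts $C_3^2$ rather than centralizing it. So the contradiction must involve $a$, through the inverse semi-rationality of elements of order $15$.

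\textbf{The missing argument.} This is what the paper does, and it is close to what your second bullet gestured at.
\begin{enumerate}
\item Suppose $d\in C\setminus\{1\}$ is inverted by $b^2$. If $b^2$ does not centralize $C$, take $d=c^{-1}c^{b^2}$ for some $c$ moved by $b^2$. The paper instead takes $d=c^bc$: it is inverted by $b^2$ because $bc$ has order $4$, so $(bc)^2=b^2c^bc$ is an involution. Showing $d=1$ then gives $c^b=c^{-1}$ directly.
\item The element $a^2d$ generates $\GEN{ad}$, and $ad$ is inverse semi-rational. Hence some $x\in G$ satisfies $(ad)^x\in\{a^2d,\,a^{-2}d^{-1}\}$. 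So $x$ acts on $\GEN{a}$ with order $4$ and $d^{x^2}=d$.
\item Replacing $x$ by $x^{-1}$ if needed, $x\in(\GEN{a}\times C)b$. Since $a$ centralizes $C$, $x$ acts on $C$ as $bc_1$ for some $c_1\in C$.
\item Therefore $d=d^{x^2}=d^{b^2c_1^bc_1}=(d^{-1})^{c_1^bc_1}$. The $3$-element $c_1^bc_1$ would then induce an automorphism of order $2$ on $\GEN{d}$, which is impossible.
\end{enumerate}
Hence $b^2$ centralizes $C$, and your last paragraph completes the proof.
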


\begin{proof}
Verifying that a group as given in (2) satisfies the conditions in (1) is straightforward.

Suppose that $G$ satisfies the conditions in (1). 
Let $a\in G$ of order $5$. As $2-5\not \in \GK(G)$ and an element of order $5$ in a \cut group is rational (\Cref{CutOrders1}\eqref{5rational}), $\C_G(a)=\GEN{a}\times C$ with $C$ a $3$-group and $G=\N_G(\GEN{a})=(\GEN{a}\times C)\rtimes \GEN{b}$ where $b\in G$ has order $4$ and $a^b=a^2$. 

Suppose by contradiction that $c$ is an element of $C$ of order $9$. Then it commutes with $a$ and since $G$ is \cut, there exists $s\in G$ such that $(a^2c^2)^s$ is either $ac$ or $a^{-1}c^{-1}$. In any case, $12$ divides the order of $s$ in contradiction with \Cref{CutOrders1}\eqref{noelements4p}.
Therefore the exponent of $C$ divides 3.

Let $c\in C$ and $c_0=c^bc$. Then $(bc)^2 = b^2 c_0$ and $bc$ has order $4$, since $G$ has no elements of order $12$. So $b^2 c_0$ has order $2$ and hence $c_0^{b^2}c_0=1$. By the inverse semi-rationality of $ac_0$ in $G$, there is $x\in \N_G(\GEN{c_0})$ of order $4$, which may be taken of the form $x=b c_1$ with $c_1\in C$ because $a$ centralizes $C$. Then $c_0=c_0^{x^2}= c_0^{b^2 c_1^b c_1}= (c_0^{-1})^{c_1^b c_1}$. If $c_0\ne 1$, then  $c_1^b c_1$ is a $3$-element acting on $\GEN{c_0}$ with order $2$, a contradiction. So $c_0=1$. This proves that $c^b=c^{-1}$ for every $c\in C$.
Thus the inversion map is an automorphism of $C$ and hence $C$ is abelian.
\end{proof}

\begin{lemma}\label{rationalS2} Let $G$ be a finite solvable \cut group with a normal Sylow subgroup of order $7$ and $\GK(G)\subseteq (3-2-7)$. Then $G=\GEN{a}\rtimes (\GEN{b}\times Q)$ with $|a|=7$, $|b|=3$, $a^b=a^2$ and $Q$ a rational $2$-group.
\end{lemma}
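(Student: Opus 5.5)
Let $A=\GEN{a}$ be the normal Sylow $7$-subgroup and $N=\C_G(a)$. Since $\GK(G)\subseteq(3-2-7)$, the prime $7$ is adjacent only to $2$. I will first determine $\C_G(a)$ and then $G/\C_G(a)$.

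\textbf{Step 1: $N=A\times Q$ with $Q$ a $2$-group.} Since $3-7\notin\GK(G)$, no element of order $3$ centralizes $a$. An element of order $49$ is impossible because $|A|=7$. So $N=A\times Q$ with $Q$ a $2$-group, by Schur–Zassenhaus and because $A$ is central in $N$.

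\textbf{Step 2: $G/N$ is cyclic of order $3$ and is generated by an element $b$ of order $3$ with $a^b=a^2$.} The quotient $G/N$ embeds in $\Aut(A)\cong C_6$. Since $a$ is inverse semi-rational, $[G:N]\in\{3,6\}$, and $[G:N]=6$ exactly when $a$ is real. I must exclude index $6$. Suppose $a$ is real and choose $g\in G$ with $gN$ of order $6$. Then some $2$-element $x$ inverts $a$ and some $3$-element $b$ satisfies $a^b=a^2$. The inverse semi-rationality of elements $ay$ with $y\in Q$ of order $2$ should be usable against this, but the cleaner route is the rationality of $Q$ derived below. If $a$ is real, then $G/N\cong C_6$ and a Sylow $2$-subgroup $P$ of $G$ satisfies $P\cap N=Q\neq P$. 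Consider an element $ay$ with $y\in Q$ of order $2$. If $Q=1$, then $G=A\rtimes C_6$ with $\GK(G)$ having no edge at $7$. In general I expect to exclude index $6$ as follows. Take a Hall $\{2,3\}$-subgroup $H$, so $G=A\rtimes H$ and $H/Q\cong C_6$ acts faithfully on $A$. An element $ay$ with $y\in Q$ nontrivial requires, by inverse semi-rationality, elements of $\N_G(\GEN{ay})$ inducing automorphisms of order $3$ on $A$ while normalizing $\GEN{y}$. I anticipate that index $6$ forces elements of order $4\cdot 7$ or conflicts with \Cref{CutOrders1}. This is the step I am least sure of. If it fails, the statement should still be reachable with $b$ of order $3$ chosen inside a Hall $\{2,3\}$-subgroup.

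\textbf{Step 3: $b$ centralizes $Q$, so $G=A\rtimes(\GEN{b}\times Q)$.} By Schur–Zassenhaus, $G=A\rtimes H$ with $H\cong G/A$, which is an extension of $Q$ by $C_3$. Since $Q\trianglelefteq H$ and $[H:Q]=3$, we can write $H=Q\rtimes\GEN{b}$ with $|b|=3$. Because $2-3$ may be an edge, commutation is not automatic. To get it, I will use that every $2$-element $y$ of $Q$ commutes with $a$, so $ay$ has order $7|y|$. By inverse semi-rationality of $ay$, its generators include the elements $(ay)^k$ with $k\equiv 2\pmod 7$ and $k\equiv1\pmod{|y|}$. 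The conjugating element $g$ must act on $A$ as $a\mapsto a^2$ or $a\mapsto a^{-2}$; the latter is excluded since index $6$ was excluded in Step 2. Hence $g$ fixes $y$ and induces squaring on $a$, so $g\in bN$ and $y$ has a centralizer meeting $bQ$. A counting/coprime-action argument should then upgrade this to all of $Q$: the action of $\GEN{b}$ on $Q$ is coprime, and every element of $Q$ is centralized by some conjugate $b'$ of $b$ in $bQ$. All complements $\GEN{b'}$ are $Q$-conjugate, and a $3$-automorphism of a $2$-group fixing a point of every $Q$-orbit of this kind is trivial. The standard fact that if every element is fixed by some conjugate of a coprime automorphism then it is trivial comes from counting fixed points: $|Q|\le\sum_{b'}|\C_Q(b')|$ with orbit sizes. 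This is the main obstacle.

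\textbf{Step 4: $Q$ is rational.} For $y\in Q$ and $k$ odd, choose $k'\equiv1\pmod7$ and $k'\equiv k\pmod{|y|}$. Inverse semi-rationality of $ay$ gives $g$ conjugating $ay$ to $(ay)^{\pm k'}$. The sign must be $+$ because $a$ is not real, so $g$ centralizes $a$, that is, $g\in A\times Q$, and $y^{g_2}=y^k$ with $g_2\in Q$. Hence $Q$ is rational.
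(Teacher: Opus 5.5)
\textbf{The gap is Step~2, and it cannot be closed, because its claim is false:} the index $[G:\C_G(a)]=6$ does occur. The case you set aside yourself, $Q=1$ and $G=\GEN{a}\rtimes C_6\cong C_7\rtimes_{\text{Fr}} C_6$, satisfies every hypothesis. It is solvable. Its elements of order $7$ are rational, and all its other elements have order in $\{1,2,3,6\}$, so they are trivially inverse semi-rational; hence $G$ is \cut. Its GK-graph is the edge $2-3$ plus the isolated vertex $7$, which lies inside $(3-2-7)$. In this group $a$ is real and $\C_G(a)_2=1$, so no decomposition $G=\GEN{a}\rtimes(\GEN{b}\times Q)$ with $Q=\C_G(a)_2$ exists.

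In the lemma, $Q$ is a full Sylow $2$-subgroup of a Hall $\{2,3\}$-subgroup $H\ni b$. It may contain elements inverting $a$, with $\C_Q(a)$ of index $2$; this is exactly the situation used in the proof of Fact~\ref{C7notnormal} in \Cref{Section-t}. Your Steps~3 and~4 both discard the alternative $(ay)^g=(ay)^{-k}$ by appeal to Step~2, and both only control $\C_G(a)_2$. As written, they therefore prove the lemma only when $a$ is not real. In that case they are fine, and your union-of-conjugates argument is a valid alternative to the paper's route: $Q=\bigcup_{x\in Q}\C_Q(b)^x$ forces $\C_Q(b)=Q$.

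To cover the real case, work from the start with $H$ and $Q\in\mathrm{Syl}_2(H)$; your Step~1 already gives $[H:Q]=3$.

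\emph{Handling the minus sign.} Take $1\ne y\in\C_Q(a)$. If the \cut property gives $(ay)^g=a^{-2}y^{-1}$, then $(ay)^{g^4}=a^{16}y=a^2y$. So in all cases some element of $G$ maps $ay$ to $a^2y$. Drop its $\GEN{a}$-part, which centralizes $ay$, and pass to a suitable power. This gives an element $bx$ of order $3$ with $x\in Q$ and $(ay)^{bx}=a^2y$; in particular $bx$ commutes with $y$.

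\emph{Showing $\GEN{b}\trianglelefteq H$.} Suppose not. Then \Cref{Fittingp} gives $1\ne y\in O_2(H)$ with $|by|=3$. Such a $y$ centralizes $a$: otherwise $a^y=a^{-1}$ gives $a^{by}=a^{-2}$, while $(-2)^3\not\equiv 1 \bmod 7$. Applying the previous paragraph to this $y$ produces $bx$ commuting with $y$, which contradicts \Cref{auxiliarpq}.

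\emph{Showing $b$ is central in $H$.} Since $\Aut(\GEN{a})$ is abelian, every conjugate of $b$ acts on $a$ as $b$ does. As $a^{b^2}\ne a^b$, no element of $Q$ can map $b$ to $b^2$, so $H=\GEN{b}\times Q$.

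\emph{Rationality of $Q$.} Your sign argument in Step~4 fails once $a$ can be real. Instead use that $G/\GEN{a}\cong\GEN{b}\times Q$ is \cut. For $1\ne q\in Q$,
\[[\N_{\GEN{b}\times Q}(\GEN{bq}):\C_{\GEN{b}\times Q}(bq)]=[\N_Q(\GEN{q}):\C_Q(q)],\]
while $|\Aut(\GEN{bq})|=2|\Aut(\GEN{q})|$. This forces $q$ to be rational in $Q$.
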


\begin{proof}
By hypothesis $G$ has an element $a$ of order $7$ such that $\GEN{a}$ is a normal Sylow $7$-subgroup of $G$. 
Then a Sylow $3$-subgroup of $G$ has order $3$, by \Cref{CutOrders1}\eqref{cyclicsylow}.  
As $a$ is inverse semi-rational in $G$, $G$ has an element $b$ of order $3$ such that $a^b=a^2$.
Let $H$ be a Hall $\{2,3\}$-subgroup of $G$ containing $b$ and let $Q$ be a Sylow $2$-subgroup of $H$. 
Then $G=\GEN{a}\rtimes H$ and $H=\GEN{Q,b}$. 

We prove first that $\GEN{b }$ is normal in $H$. Otherwise, by \Cref{Fittingp}, there is $y\in Q\setminus \{1\}$ with $|by|=3$. Then $y$ centralizes $a$, for otherwise, $a^y=a^{-1}$ and then $a^{by}=a^{-2}$ and, as $by$ has order $3$, $1\equiv (-2)^3\equiv -1 \mod 7$, a contradiction. 
Since $G$ is \cut, there is an element $g\in G$, such that $(ay)^g=a^2y$ or $(ay)^g=a^{-2}y^{-1}$. 
Suppose that $(ay)^g=a^2y$. As $(a,y)=1$ and the automorphism of $\GEN{ay}$ mapping $ay$ to $a^2y$ has order $3$, we may assume without loss of generality that $g$ is an element of order $3$ in $H$. 
Then $g=b^ix$ with $x\in Q$ and $i\in \{1,2\}$. However, if $i=2$, then 
$a^2=a^{b^2x}=(a^4)^x$, but this contradicts the fact that $x$ is a $2$-element. So $(ay)^{bx}=a^2y$. In particular $bx$ commutes with $y$, a contradiction with \Cref{auxiliarpq}.
Therefore $(ay)^g=a^{-2}y^{-1}$ and then $(ay)^{g^2}=a^4y$. Arguing as in the previous case, we deduce that $H$ contains and element $h$ of order $3$ such that $(ay)^h=a^4y$. So $h=b^ix$ with $i\in \{1,2\}$ and $x\in Q$. If $i=1$, then $a^4=a^{bx}=(a^2)^x$, contradicting the fact that $x$ is a $2$-element. So $i=2$. Observe that $x$ belongs to the group $P$ generated by the elements of $H$ of order $3$. 
By \Cref{GSylowp}, $P=P'\rtimes \GEN{b}$ and $P'$ is a Sylow $2$-subgroup of $P$. Hence $x\in P'$. Then $z=x^{b^2}x\in P'$, so it is a $2$-element and as $h^2=bz$, $bz$ has order $3$ and $(ay)^{bz}=a^2y$. 
So, $bz$ commutes with $y$, in contradiction with \Cref{auxiliarpq}.
Therefore $\GEN{b}$ is normal in $H$.

If $b$ is not central in $H$, then there is an element $q\in Q$ such that $b^q=b^2$, but the automorphism group of $\GEN{a}$ is abelian, so $a^2=a^b= a^{q^{-1} b q}= a^{b^q}=a^{b^2}=a^4$, a contradiction. Thus $H=\GEN {b}\times Q$, as desired. 
Finally, as $\GEN{b}\times Q\cong G/\GEN{a}$ is \cut,  we derive that $Q$ is rational. Indeed,  suppose that $q\in Q$ is not rational in $Q$. We have that  $\N_{\GEN{b}\times Q}(\GEN{bq})= \GEN{b}\times \N_Q(\GEN{q})$ and $\C_{\GEN{b}\times Q}(\GEN{bq})= \GEN{b}\times \C_Q(\GEN{q})$, so 
 $$| \N_{\GEN{b}\times Q}(\GEN{ bq}):\C_{\GEN{b}\times Q}(bq)|= |\N_Q(\GEN{q}): \C_Q(q)|=\Aut(\GEN{q})/2 <\Aut(\GEN{q})= \Aut(\GEN{bq})/2, $$
 a contradiction because $\GEN{b}\times Q$ is \cut.
\end{proof}
	
\section{Graph (s)}\label{Section-s}
	
In this section we prove that the graph (s) in \Cref{QuestionE} is not the GK-graph of a finite solvable \cut group. By means of contradiction we fix a finite solvable \cut group $G$ of minimal order  with 
\begin{equation}\label{Grafo-s}
	\GK(G)= \begin{tikzpicture}[local bounding box=bb,baseline=(bb.center)]
		\node[label=west:{$2$}] at (0,0.5) (2){};
		\node[label=east:{$3$}] at (0.5,0.5) (3){};
		\node[label=west:{$5$}] at (0,0) (5){};
		\node[label=east:{$7$}] at (0.5,0) (7){};
		\foreach \p in {2,3,5,7}{
			\draw[fill=black] (\p) circle (0.075cm);
		}
		\draw (2)--(3);
		\draw (2)--(7);
		\draw (3)--(5);
		\draw (3)--(7);
	\end{tikzpicture}\ 
\end{equation} 

We start fixing a minimal normal subgroup $V$ of $G$. 
The minimality hypothesis on $G$ implies that it satisfies the hypothesis of \Cref{V5or7}, so $V$ is an elementary abelian Sylow $p$-subgroup of $G$ with $p\in\{5,7\}$. 
This shows that $\pi(F(G))\subseteq \{5,7\}$. 
However, $G$ does not have elements of order $5\cdot 7$, so that $F(G)=V$.
We fix also a Hall $p'$-subgroup $S$ of $G$. 
Then $G=V\rtimes S$ and $S$ acts irreducibly on $V$, since $V$ is minimal normal in $G$. 
Furthermore, $\C_S(V)\subseteq V\cap S=1$ since $F(G)=V$ and $G$ is solvable. Therefore $\C_S(V)=1$, i.e. the action of $S$ on $V$ is faithful. 

\begin{fact}\label{F7elementaryabelian}
$p=7$.
\end{fact}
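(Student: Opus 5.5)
We argue by contradiction: suppose $p=5$. In graph (s) the vertex $5$ is adjacent only to $3$. So $G$ has no elements of order $2\cdot 5$ or $5\cdot 7$, and $V$ is an elementary abelian Sylow $5$-subgroup with $G=V\rtimes S$ acting faithfully and irreducibly, where $\pi(S)=\{2,3,7\}$. The plan is to get the contradiction from the $7$-elements of $S$, which must act on $V$ without fixed points.

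First we pin down the Sylow subgroups of $S$ for the primes not adjacent to $5$. Since $V$ is a normal Sylow $5$-subgroup and $G$ has no elements of order $5\cdot 2$ or $5\cdot 7$, \Cref{CutOrders1}\eqref{cyclicsylow} applies with $q=2$ and $q=7$. It gives that $S_7$ is cyclic of order $7$, and that $S_2$ is either $Q_8$ or cyclic of order dividing $4$. Next, by \Cref{CutOrders1}\eqref{5rational}, every element $v\in V$ is rational in $G$, so $V_S$ has the eigenvector property. In particular some element $g\in S$ acts on $\GEN{v}$ as an automorphism of order $4$. Then $g_2$ acts on $\GEN{v}$ with order $4$, because the $3$- and $7$-parts of $g$ act trivially on $\GEN{v}$, as $\Aut(\GEN{v})\cong C_4$.

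The main step is to contradict the edges $2-7$ and $3-7$ using the fixed-point-free action of the element $t$ of order $7$. Since $G$ has no elements of order $35$, $\GEN{t}$ acts fixed-point-freely on $V$. We will look at $S$ and its Fitting subgroup. Suppose $\GEN{t}$ is normal in a subgroup $T$ together with a $2$-element or $3$-element $x$ that acts nontrivially on $\GEN{t}$, so that $\GEN{t}\rtimes\GEN{x}$ is Frobenius. Then \Cref{FrobeniusFaithful} applied to $V$ shows that $x$ fixes a nonzero vector. For $x$ of order $2$ or $4$ this contradicts the absence of $2-5$. For $x$ of order $3$ it is consistent, since $3-5$ is an edge. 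So the real constraint comes from the $2$-part.

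I would split into the cases $S_2\cong Q_8$ and $S_2$ cyclic. When $S_2$ is cyclic, \Cref{CutOrders1}\eqref{noelements4p} forbids elements of order $4\cdot 7$, and \Cref{G2C2ornoelements21} together with the edge $3-7$ forces $S_2\cong C_2$. But then no $2$-element acts with order $4$ on $\GEN{v}$, contradicting the eigenvector property. When $S_2\cong Q_8$, \Cref{G2=Q8}\eqref{No357-437} excludes elements of order $4\cdot3\cdot7$. On the other hand, $t$ must commute with some involution (edge $2-7$), and the unique involution $z$ of $Q_8$ acts on $V$ as $-1$ by \Cref{G2=Q8}\eqref{Q8commutes} and the absence of $2-5$. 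I would then use the $2$-part $S_2$ acting Frobeniusly on $V$ together with $t$ commuting with $z$ to build a Frobenius subgroup $\GEN{t}\rtimes\GEN{x}$ with $x$ a $2$-element. Failing that, I would show that $\GEN{t,z}$ embeds $S$-structure contradicting \Cref{G2=Q8}\eqref{4pnotrational} applied to elements of order $28$ arising from $t$ and an element of order $4$ normalizing $\GEN{t}$.

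The delicate point, and the one I expect to be the main obstacle, is this last $Q_8$ case: producing the right $2$-element normalizing $\GEN{t}$ and acting nontrivially on it. I would first try to obtain it from the inverse semi-rationality of $t$, which requires $[\N_S(\GEN{t}):\C_S(t)]\ge 3$ and hence a $3$-element (harmless). Then I would combine this with the rationality of $tz$ to get a $2$-element inverting or moving $t$, and apply \Cref{FrobeniusFaithful}.
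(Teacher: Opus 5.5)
Your setup and the cyclic case match the paper. If $S_2$ is cyclic, \Cref{G2C2ornoelements21} and the edge $3-7$ force $S_2\cong C_2$, which contradicts the rationality of the elements of $V$. The gap is the case $S_2\cong Q_8$, which is the real content of the proof, and there your plan aims the wrong way. You want a $2$-element acting nontrivially on $\GEN{t}$, obtained from the ``rationality of $tz$''. Nothing gives this: the \cut property only yields inverse semi-rationality, and \Cref{CutOrders1}\eqref{5rational} applies only to primes $\equiv 1 \bmod 4$. In fact the paper proves the opposite as an intermediate step. A Hall $\{2,7\}$-subgroup has a normal Sylow $7$-subgroup: otherwise it has $48$ elements of order $7$, and the remaining $8$ form a normal $Q_8$, which has no automorphism of order $7$. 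Hence some element $y$ of order $4$ centralizes $t$, and \Cref{G2=Q8}\eqref{4pnotrational} applied to $ty$ gives $G_{\{2,7\}}\cong Q_8\times C_7$. So every $2$-element normalizing $\GEN{t}$ centralizes it, and $t$ is not real. Its inverse semi-rationality is realized by a $3$-element, and elements of order $28$ exist without causing any contradiction. Consequently neither of your routes can produce the contradiction: the Frobenius subgroup $\GEN{t}\rtimes\GEN{x}$ with $x$ a $2$-element does not exist, and \Cref{G2=Q8}\eqref{4pnotrational} is satisfied by the elements of order $28$.

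The missing idea is that the contradiction lives in the edge $3-7$, i.e.\ in the $3$-elements of $\C_G(t)$, which you set aside as harmless.
\begin{enumerate}
\item Take a $3$-element $b\in\C_G(t)$. Inverse semi-rationality of $tb$ gives a $2$-element $c\in\N_G(\GEN{tb})\setminus\C_G(tb)$. Since $c$ normalizes $\GEN{t}$, the previous step shows it centralizes $t$, so it inverts $b$.
\item $\C_G(t)$ contains a Sylow $2$-subgroup $Q_8$. It has no elements of order $12$, since otherwise $G$ would have elements of order $4\cdot 3\cdot 7$, against \Cref{G2=Q8}\eqref{No357-437}.
\item Hence \Cref{FabelianSylow} applies to $\C_G(t)_{\{2,3\}}$. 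Its Fitting subgroup is an abelian Sylow $3$-subgroup, nontrivial by the edge $3-7$.
\item Now use the observation you already made but did not exploit. An involution $c\in\C_G(t)$ fixes no nonzero vector of $V$, so it acts as $-1$ on $V$. By faithfulness, $c$ is central in a Hall $5'$-subgroup containing $\C_G(t)$.
\item Then $\GEN{c}$ is a normal $2$-subgroup of $\C_G(t)_{\{2,3\}}$, contradicting that its Fitting subgroup is a $3$-group.
\end{enumerate}
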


\begin{proof}
By the arguments above, we only have to prove that $p\ne 5$. 
By means of contradiction assume otherwise. 
In view of \Cref{CutOrders1}.\eqref{cyclicsylow}, $G_7\cong C_7$ and $G_2$ is $Q_8$ or cyclic of order divisor of $4$. Applying \Cref{G2C2ornoelements21} to $S$ we get that $G_2$ is $C_2$ or $Q_8$. 
By \Cref{CutOrders1}\eqref{5rational}, every element of order $5$ of $G$ is rational in $G$, so $G_2\not \cong C_2$ and hence $G_2\cong Q_8$.

We claim that $G_{\{2,7\}}\cong C_7 \times Q_8$. For this, we first prove that $G_{\{2,7\}}$ has a normal Sylow $7$-subgroup. 
Otherwise, by the Sylow Theorem, it has $8$ subgroups of order $7$ and hence it has $6\cdot 8$ elements of order $7$, so that the remaining $8$ elements form the unique Sylow $2$-subgroup. 
Thus, $G_{\{2,7\}}=Q_8\rtimes C_7$. 
However, $Q_8$ does not have automorphisms of order $7$ and therefore $C_7$ is central, and in particular, normal in $G_{\{2,7\}}$. 
Thus $G_{\{2,7\}}$ is either $Q_8\times C_7$ or $C_7\rtimes Q_8$. 
As $|\Aut(C_7)|=6$, any element of order $4$ in $Q_8$ acts as the inversion or the identity in $C_7$. 
In any case, there is an element of order $4$ of $Q_8$ acting trivially on $C_7$. We conclude by \Cref{G2=Q8}\eqref{4pnotrational} that $G_{\{2,7\}}=Q_8\times C_7$, as desired.

Fix an element $a\in G$ of order $7$.
We now analyze the structure of $\C_G(a)$.
Note firstly that, as $G$ has an element of order $3\cdot 7$ and $\GEN{a}$ is a Sylow $7$-subgroup of $G$, $\C_G(a)$ contains elements of order $3$. 
If $\C_G(a)$ has an element $b$ order $3\cdot4$, then $x=ab$ has order $3\cdot4\cdot7$, a contradiction with \Cref{G2=Q8}\eqref{No357-437}. This proves that $\C_G(a)$ does not have elements of order $12$.

We claim that for every $3$-element $b\in \C_G(a)$, there is a $2$-element $c\in \C_G(a)$, such that $b^c=b^{-1}$.
Indeed, let $b\in \C_G(a)$ of order $3^r$ for some $r\geq 1$, and let $x=ab$. 
As $x$ is inverse semi-rational in $G$, there is a $2$-element $c\in \N_G(\GEN{x})\setminus \C_G(x)$. 
However, every $2$-element of $\N_G(\GEN{x})$ belongs to $\N_G(\GEN{a})$ and hence also
belongs to $\C_G(a)$, since $G_{\{2,7\}}\cong Q_8\times C_7$. Therefore $c\not \in \C_G(b)$ and  hence  $b^c=b^{-1}$.
We have proved that $\C_G(a)_{\{2,3\}}$ satisfies the hypothesis of \Cref{FabelianSylow}. 
Then, $F(\C_G(a)_{\{2,3\}})$ is an abelian Sylow $3$-subgroup and $b^c=b^{-1}$ for every element $c\in \C_G(a)_{\{2,3\}}$ of order $2$ and every $b\in F(\C_G(a)_{\{2,3\}})$. 

Let $c\in \C_G(a)$ of order $2$ and $w\in V\setminus\{1\}$. If $v=ww^c$, then $v^c=v$ and $v=1$ as $G$ does not contain elements of order $2\cdot 5$. Hence, $w^c=w^{-1}$ for every $w\in V\setminus\{1\}$.  As the action of $S$ on $V$ is faithful, we deduce that $c\in Z(S)$. This is a contradiction with $F(\C_G(a)_{\{2,3\}})$ being a $3$-group.
\end{proof}

So we have $G=V\rtimes S$, with $V$ an elementary abelian Sylow $7$-subgroup of $G$ and $S$ a Hall $7'$-subgroup of $G$ acting faithfully and irreducibly on $V$. Clearly $\GK(S)= (2-3-5)$. Moreover, by \Cref{CutOrders1}\eqref{cyclicsylow}, the Sylow $5$-subgroups of $S$ have order $5$.

\begin{fact}\label{C5notnormal} The Sylow $5$-subgroups of $S$ are not normal in $S$.
\end{fact}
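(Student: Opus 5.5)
The plan is to argue by contradiction: assume $S$ has a normal Sylow $5$-subgroup, use \Cref{S3elementaryabelian} to pin down the structure of $S$ completely, and then show that the edges $2-7$ and $3-7$ of $\GK(G)$, combined with the \cut property, cannot both hold.

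\textbf{Structure of $S$.} Suppose $S$ has a normal Sylow $5$-subgroup. It has order $5$, as noted before the statement. Since $S\cong G/V$ is solvable and \cut, and $\GK(S)=(2-3-5)$, \Cref{S3elementaryabelian} applies. It gives
\[S=(\GEN{a}\times C)\rtimes\GEN{b},\qquad |a|=5,\ |b|=4,\ a^b=a^2,\]
with $C$ an elementary abelian $3$-group and $c^b=c^{-1}$ for all $c\in C$. Put $N=\GEN{a}\times C$. I next record which elements of $S$ matter.
\begin{enumerate}
\item \emph{Elements outside $N$ are $2$-elements.} For $x\in N$ we have $(bx)^2=b^2x^bx$, and $x^bx$ kills the $C$-part of $x$. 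So every element of $S\setminus N$ has $2$-power order, and every element of $S$ of order divisible by $3$ lies in $N$.
\item \emph{Involutions.} The Sylow $2$-subgroup $\GEN{b}$ is cyclic, so every involution of $S$ is conjugate to $b^2$. Moreover $b^2$ inverts every element of $C$.
\end{enumerate}

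\textbf{Using the two edges.} Since $2-7\in\GK(G)$, some involution of $S$ fixes a non-trivial $v\in V$. Replacing $v$ by a conjugate, we may assume $v^{b^2}=v$. Now use that $v$, of order $7$, is inverse semi-rational in $G$. Then some $g\in G$ satisfies $v^g=v^3$ or $v^g=v^{-3}$. Squaring if needed, we get an element of $\N_G(\GEN{v})$ acting on $\GEN{v}$ as $v\mapsto v^2$. Write it as $g=us$ with $u\in V$ and $s\in S$. Since $V$ is abelian, we may take $g=s\in S$, and then replace $s$ by its $3$-part.

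The $3$-part $s_3$ lies in $C$ by item 1. Its action on $\GEN{v}$ has order $3$, so, up to replacing it by its inverse, $v^{c}=v^2$ for some $c\in C$. (This $3$-part argument is also why no $a$-component appears: $a$ has order $5$ and $5\nmid|\Aut(\GEN{v})|=6$.)

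\textbf{Contradiction.} Since $b^2$ inverts $c$, we have $c^{-1}=b^{2}cb^{2}$ (note $b^{-2}=b^2$). Hence
\[v^{c^{-1}}=v^{b^2cb^2}=(v^2)^{b^2}=v^2.\]
On the other hand, $v^c=v^2$ gives $v=(v^{c^{-1}})^2$, so $v^{c^{-1}}=v^4$. Therefore $v^2=v^4$, which forces $v=1$, a contradiction.

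\textbf{Main obstacle.} The delicate step is the reduction that the rationalizing element can be taken to be a $3$-element of $C$. It relies on item 1 (no element of order $3$ outside $N$) and on discarding the $V$-part and the $2$-part of the rationalizing element. That reduction needs to be written carefully; the final computation is immediate.
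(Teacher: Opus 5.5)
\textbf{There is a genuine gap: the step you called immediate fails.} The relation $c^b=c^{-1}$ gives
\[
c^{b^2}=(c^{-1})^b=c .
\]
So $b^2$ \emph{centralizes} $C$; it does not invert it. Only $b$, $b^{3}$ and the other elements of $bN\cup b^3N$ invert $C$. Hence $b^2cb^2=c$, and your chain becomes $v^{c}=v^{b^2cb^2}=(v^2)^{b^2}=v^2$, which gives no contradiction.

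Item 1 is false for the same reason. For $1\ne c\in C$ we get $(b^2c)^2=c^{b^2}c=c^2$, so $b^2c\notin N$ has order $6$. This must happen, since $N$ has odd order and $2-3$ is an edge of $\GK(S)$. Items 1 and 2 together would already contradict $\GK(S)=(2-3-5)$, which signals the error. The fact you actually use, $s_3\in C$, is still true because $C$ is the normal Sylow $3$-subgroup of $S$. Your reduction to some $c\in C$ with $v^c=v^2$ is correct.

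\textbf{What is missing is an ingredient that genuinely uses the edge $3-7$.} Your argument uses only $2-7$ and the inverse semi-rationality of $v$. With the correct relations, the configuration you reach is consistent: $\GEN{v,b^2,c}\cong C_7\rtimes C_6\cong (C_7\rtimes C_3)\times C_2$. So no computation inside it can give a contradiction.

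The paper's argument is global and runs as follows.
\begin{enumerate}
\item Since $V$ is faithful and irreducible, the Brauer character of an absolutely irreducible constituent is faithful. It has the form $\varphi=\lambda^S$ with $\lambda$ linear on a normal subgroup $H\supseteq\GEN{a}\times C$ whose kernel $K$ is core-free.
\item Every subgroup of $\GEN{a}\times C$ is normal in $S$, so $K$ is a $2$-group. Every involution $x$ of $S$ inverts $a$, so $(x,a)=a^2$, which forces $K=1$.
\item Hence $H$ is cyclic, so $C$ and therefore $G_3$ are cyclic.
\item Then \Cref{CutOrders1}\eqref{noelements37} forbids elements of order $21$, contradicting $3-7\in\GK(G)$.
\end{enumerate}
Some such use of the faithful irreducible module, forcing $C$ to be cyclic, is what your proposal lacks.
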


\begin{proof} 
By means of contradiction suppose that $S$ has a normal Sylow $5$-subgroup. 
By \Cref{S3elementaryabelian}, $S\cong (\GEN{a}\times E)\rtimes \GEN{b}$ with $|a|=5$, $|b|=4$, $a^b=a^2$, $E$ elementary abelian $3$-group and $e^b=e^{-1}$ for every $e\in E$.   
Consider $V$ as an $\F_7S$-module and let $F$ be a splitting field for $S$ containing $\F_7$. 
Then 
$$V\otimes_{\F_7} F\cong W_1\oplus \cdots \oplus W_k$$
with $W:=W_1,\dots,W_k$ absolutely irreducible $FS$-modules. As $V_{\F_7S}$ is faithful, so is each $W_{i}$, because, since $V_{\F_7 S}$ is irreducible, the characters of the $W_i$'s form a Galois orbit of that of $W$ \cite[Theorem~9.21]{Isaacs1976}. Let $\varphi$ be the Brauer character afforded by $W$. Hence $\varphi\in\Irr(S)$ and $\varphi$ is faithful. 
By \cite[Theorem~3.5.12]{JespersdelRioGRG1}, $\varphi=\lambda^S$ for a linear character $\lambda$ of a normal subgroup $H$ of $S$ containing $\GEN{a}\times E$,  and, if $K$ is the kernel of $\lambda$, then $H/K$ is a cyclic maximal abelian subgroup of $\N_S(K)/K$.
Then $1=\ker \varphi = \Core_S(K)$. 
This implies that $K\cap S_{2'}=1$ as every subgroup  of $S_{2'}$ is normal in $S$. 
Then $K$ is a $2$-subgroup of $S$. Assume $K\neq 1$ and let $x\in K$ of order $2$.
Then  $x=yb^2$ with $y\in \GEN{a}\times E\subseteq \C_S(a)$. Hence $a^x=a^{b^2}=a^4=a^{-1}$ and so  $a^{2}=(x,a)\in K$, a contradiction. Thus $K=1$, i.e. $\lambda$ is a faithful character of $H$. So $H$ is cyclic, and as it contains $E$ it follows that $E$ is a cyclic Sylow $3$-subgroup of $S$. Thus $G_3$ is cyclic and hence $G$ has no elements of order $3\cdot 7$ by \Cref{CutOrders1}.\eqref{noelements37}, in contradiction with \eqref{Grafo-s}. 
\end{proof}
	
In the remainder of the proof $a$ is an element of $S$ of order $5$ and $C=\C_S(a)_{5'}$.

\begin{fact}\label{NormalizerS5}
$C$ is a $3$-group, $\N_S(\GEN{a})$ has an element of order $4$ and for every element $b$ of order $4$ in $\N_S(\GEN{a})$, 
	\[\N_S(\GEN{a})=(\GEN{a}\times C) \rtimes \GEN{b}\qand a^b=a^{\pm 2}.\] 
Moreover, $\N_S(\GEN{x})=\C_S(x)$ for every $2$-element $x\in \N_S(\GEN{a})$.
\end{fact}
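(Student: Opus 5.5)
We work inside $S$, which is a solvable \cut group (being $G/V$) with $\GK(S)=(2-3-5)$ and Sylow $5$-subgroups of order $5$. The argument has three parts: identify $C$, find an element of order $4$ and the product decomposition, then handle the $2$-elements.

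\textbf{$C$ is a $3$-group.} Elements of $C=\C_S(a)_{5'}$ commute with $a$. The edges $2-5$ and $5-7$ are absent, and $7\nmid|S|$, so $C$ contains no element of order $2$ and $\pi(C)\subseteq\{3\}$.

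\textbf{An element of order $4$ and the decomposition.} By \Cref{CutOrders1}\eqref{5rational}, $a$ is rational in $S$. Hence $\N_S(\GEN{a})/\C_S(a)\cong\Aut(\GEN{a})\cong C_4$. Choose a $2$-element $b\in\N_S(\GEN{a})$ whose image generates this quotient. Then $a^b=a^{\pm2}$, and $b^4\in\C_S(a)$ is a $2$-element commuting with $a$, so $b^4=1$ and $|b|=4$. Since $\C_S(a)=\GEN{a}\times C$ with $C\cap\GEN{b}=1$, the quotient shows
\[\N_S(\GEN{a})=(\GEN{a}\times C)\rtimes\GEN{b}.\]
The same reasoning applies to an arbitrary element $b$ of order $4$ in $\N_S(\GEN{a})$. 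Its image in the quotient $C_4$ has order $4$, because $b^2\notin\C_S(a)$ (again using that $\C_S(a)$ has no $2$-elements). So $b$ generates the quotient, $a^b=a^{\pm2}$, and the decomposition holds for this $b$ as well.

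\textbf{$2$-elements of $\N_S(\GEN{a})$.} The Sylow $2$-subgroups of $\N_S(\GEN{a})$ are cyclic of order $4$, conjugate to $\GEN{b}$. So it suffices to treat $x=b$ and $x=b^2$, and to show that $\N_S(\GEN{x})$ centralizes $x$. This is where \cut-ness enters. Only $x$ of order $4$ needs care: subgroups of order $2$ are automatically centralized by their normalizer. So take $y\in\N_S(\GEN{b})$ with $b^y=b^{-1}$; we want a contradiction.

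The plan is to bring $a$ into play. Consider the element $ab^2$: it is not of order $10$, since $b^2$ inverts $a$. Alternatively, use that $\N_S(\GEN{b})$ normalizes $\GEN{b^2}$ and examine $\C_S(b^2)$, which contains $\GEN{b}$. I would look for an element of order $4\cdot3$ or $4\cdot5$, or a violation of \Cref{CutOrders1}\eqref{noelements4p}. However, $S_2$ need not be cyclic, so that lemma does not apply directly.

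A more robust route combines \Cref{C5notnormal} with \Cref{Fittingp}, or uses the faithful action on $V$. The key step I expect is the following: a $2$-element $y$ inverting $b$ gives $\GEN{b,y}$ a dihedral or quaternion section. One then takes a fixed point $w\in V$ of a suitable element, via a Frobenius-type argument as in \Cref{FrobeniusFaithful} applied to $\GEN{a}\rtimes\GEN{b}$, which is Frobenius of order $20$. This produces an element of order $5\cdot7$ or $4\cdot 7$ that contradicts the graph or \cut-ness.

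\textbf{Main obstacle.} I expect this last step — excluding an element inverting $b$ — to be the main obstacle. What I would try first is \Cref{FrobeniusFaithful} on the faithful $\F_7(\GEN{a}\rtimes\GEN{b})$-module $V$. It gives $v\in V$ fixed by $b$, hence an element $vb$ of order $28$. Inverse semi-rationality of $vb$ in $G$ then forces an element acting on $\GEN{vb}$ as $-1$ on the $7$-part and $+1$ on the $4$-part (up to inversion). Analyzing this against $y$ should yield the contradiction.
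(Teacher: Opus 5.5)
\textbf{Verdict: the proposal has a genuine gap.} Your first part is correct and is essentially the paper's argument. Rationality of $a$ in $S$ (\Cref{CutOrders1}\eqref{5rational}) gives $\N_S(\GEN{a})/\C_S(a)\cong C_4$. Since $\C_S(a)=\GEN{a}\times C$ has no non-trivial $2$-elements, every element $b$ of order $4$ in $\N_S(\GEN{a})$ gives the stated decomposition with $a^b=a^{\pm 2}$. Your reduction of the last assertion to showing that $b$ is not conjugate to $b^{-1}$ in $S$ is also fine.

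That reduction, however, is the substantive part of the statement, and you do not prove it. You list candidate strategies and an ``expected'' key step, and the one you favour cannot work. \Cref{FrobeniusFaithful}, applied to the Frobenius group $\GEN{a}\rtimes\GEN{b}$ acting faithfully on $V$, does give $v\in V\setminus\{1\}$ with $v^b=v$, so $vb$ has order $28$. But elements of order $28$ are not excluded at this stage: $2-7$ is an edge of (s), and $S_2$ is not yet known to be cyclic. Inverse semi-rationality of $vb$ only asks that enough automorphisms of $\GEN{vb}$ be induced by conjugation, and this condition is monotone. A hypothetical $y$ with $b^y=b^{-1}$ only adds induced automorphisms; for instance, if it centralizes $v$ it realizes $vb\mapsto vb^{-1}=(vb)^{15}$. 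So it can never conflict with \cut-ness of $vb$. Non-reality of $b$ needs a structural obstruction.

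\textbf{The missing idea} is a quotient of $S$ in which $b$ is visibly not real.
\begin{enumerate}
\item Take $N$ maximal among the normal subgroups of $S$ not containing $a$.
\item $\GEN{aN}$ is normal in $S/N$. Otherwise $S/N$ has no non-trivial normal $5$-subgroup, since its Sylow $5$-subgroups have order $5$. Then the preimage of $F(S/N)$ would be normal in $S$, properly contain $N$, and not contain $a$, contradicting maximality.
\item Hence \Cref{S3elementaryabelian} applies to $S/N$. The elementary abelian $3$-subgroup it provides is normal in $S/N$, so it is trivial by maximality of $N$. Thus $S/N\cong C_5\rtimes C_4$, the Frobenius group of order $20$.
\item Since $(a,b^2)=a^{-2}$, we have $b^2\notin N$. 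So $bN$ has order $4$ and maps onto a generator of the abelian quotient $(S/N)/\GEN{aN}\cong C_4$.
\item Therefore $bN$ is not conjugate to $b^{-1}N$, so $b$ is not conjugate to $b^{-1}$ in $S$. This gives $\N_S(\GEN{b})=\C_S(b)$.
\end{enumerate}

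Your order-$28$ element is still useful elsewhere. Once \Cref{X35} and the Frattini argument show that $S_2=\GEN{b}$ is cyclic, \Cref{CutOrders1}\eqref{noelements4p} forbids such an element, and this gives the contradiction that ends the section. But your proposal does not make that argument, and it would refute the whole configuration rather than establish the present statement.
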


\begin{proof}
As $a$ is rational in $S$, $\N_S(\GEN{a})/\C_S(a)$ is cyclic of order $4$ and hence $\N_S(\GEN{a})=(\GEN{a}\times C)\rtimes \GEN{b}$ with $C$ a $3$-group and $b$ an element of order $4$ of $S$ such that $a^b=a^2$. If $b_1$ is another element of order $4$ of $\N_S(\GEN{a})$, then $b_1=xb^{\pm 1}$ for some $x\in \C_S(a)$ and hence $\N_S(\GEN{a})=(\GEN{a}\times C)\rtimes \GEN{b_1}$ and $a^{b_1}=a^{\pm 2}$.

Let $N$ be maximal among the normal subgroups of $S$ not containing $a$. Then $\GEN{aN}$ is normal in $S/N$, for otherwise $F(S/N)=M/N$ with $M$ a normal subgroup of $S$ not containing $a$ and containing $N$ properly. By \Cref{S3elementaryabelian}, we have $S/N\cong C_5\rtimes C_4$. Thus every element of order $4$ in $S/N$ is not conjugate to its inverse and $N\cap \N_S(\GEN{a})=C$ (because $S=N\N_S(\GEN{a})$, by the Frattini argument). Let $x$ be a $2$-element of $\N_S(\GEN{a})$. If the order of $x$ is less than $4$, then clearly, $\N_S(\GEN{x})= \C_S(x)$. Otherwise $|x|=4$ and hence, $|xN|=4$. Moreover, $xN$ is not conjugate to $x^{-1}N$ in $S/N$ and hence $x$ is not conjugate to $x^{-1}$ in $S$ so that, again, $\N_S(\GEN{x})= \C_S(x)$.
\end{proof}

\begin{fact}\label{Strivialcenter}
$Z(S)=1$.
\end{fact}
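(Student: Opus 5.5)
We argue by contradiction: take $z\in Z(S)$ of prime order $r\in\{2,3,5\}$ and contradict one of the facts already established. The key tool is that $V$ is a faithful irreducible $\F_7S$-module, so by Schur's lemma a central element $z$ of $S$ acts on $V$ as a scalar of the field $\operatorname{End}_{\F_7S}(V)$. Hence $z$ acts fixed-point-freely on $V$, since $\C_V(z)$ is an $S$-submodule and must be trivial by faithfulness and irreducibility. This gives no element of order $7r$ directly, so the contradiction has to come from the structure of $S$ and from \eqref{Grafo-s}.

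\emph{Case $r=5$.} Here $\GEN{z}$ is a normal subgroup of order $5$, hence a normal Sylow $5$-subgroup of $S$. This contradicts \Cref{C5notnormal}.

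\emph{Case $r=2$.} Then $z$ commutes with $a$, so $az$ has order $10$. But $2-5$ is not an edge of \eqref{Grafo-s}, a contradiction.

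\emph{Case $r=3$.} Now $z\in\C_S(a)_{5'}=C$. Choose $b\in\N_S(\GEN{a})$ of order $4$, which exists by \Cref{NormalizerS5}. Then $zb$ has order $12$. We must exclude this. The element $zb$ has order $12$ and lies in the \cut group $S$ (a quotient of $G$, hence \cut). If $S_2$ is cyclic, \Cref{CutOrders1}\eqref{noelements4p} forbids elements of order $12$ directly. Otherwise we use inverse semi-rationality of $zb$. We need $g\in S$ with $(zb)^g=(zb)^5=z^2b$ or $(zb)^g=(zb)^{-5}=zb^{-1}$. In the first case $g$ centralizes $b$ and inverts $z$. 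In the second case $g$ inverts $b$ and centralizes $z$. But $z$ is central, so no $g$ inverts $z$, which rules out the first case. So $b$ must be conjugate to $b^{-1}$, and the conjugating element can be taken in $\N_S(\GEN{b})$. By the last assertion of \Cref{NormalizerS5}, $\N_S(\GEN{b})=\C_S(b)$, so $b$ is not conjugate to $b^{-1}$. This is a contradiction.

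Hence $Z(S)$ has no element of prime order, so $Z(S)=1$. The only delicate point is the $r=3$ case, where everything hinges on the final clause of \Cref{NormalizerS5}: order-$4$ elements of $\N_S(\GEN{a})$ are not real in $S$. The centrality of $z$ then forces the conjugation to act on $b$ by inversion.
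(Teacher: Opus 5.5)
Your proof is correct and follows the paper's argument: a central element $z$ of order $3$ times an order-$4$ element $b\in\N_S(\GEN{a})$ gives an element $zb$ of order $12$. Since $\N_S(\GEN{b})=\C_S(b)$ and $z$ is central, the normalizer of $\GEN{zb}$ equals its centralizer, so $zb$ is not inverse semi-rational.

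The differences are cosmetic. The paper excludes central elements of orders $2$ and $5$ in one line, using the missing edge $2-5$ of $\GK(S)$; your use of the non-normality of the Sylow $5$-subgroups for $r=5$ also works. Your Schur's lemma remark and the case split on whether $S_2$ is cyclic are unnecessary, because the inverse semi-rationality argument covers every case.
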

\begin{proof}
Assume that $z$ is a central element of $S$ of prime order. Since  $S$ is \cut and  $\GK(S)= (2-3-5)$, the order of $z$ is $3$. By \Cref{NormalizerS5},  $\N_S(\GEN{a})$ has an element $b$ of order $4$ and $\N_S(\GEN{b})=\C_S(b)$. Then $bz$ has order $12$ and $\N_S(\GEN{zb})=\C_S(zb)$, so $zb$ is not inverse semi-rational in $S$, a contradiction.
\end{proof}

Let $X$ be the subgroup generated by the elements of order $5$ of $S$.
By \Cref{GSylowp}, we have $X=X'\rtimes \GEN{a}$ with $X'$ a $5'$-group, and, by \Cref{C5notnormal}, $(X',a)\ne 1$. In particular, $X'$ is a non-trivial normal subgroup of $S$ and $\pi(X')\subseteq \{2,3\}$.

\begin{fact}\label{X35}
$\pi(X')=\{3\}$ and $F(X)=X'\ne 1$.
\end{fact}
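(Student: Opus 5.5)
We need to show that $X'$ is a $3$-group; the equality $F(X)=X'\ne 1$ will then follow quickly. The plan is to rule out the prime $2$ in $\pi(X')$, first for the Fitting subgroup and then for the whole of $X'$.

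\emph{Step 1: $O_2(X')=1$.} Let $T=O_2(X')$, which is characteristic in $X'$ and hence normal in $S$. Suppose $T\ne 1$. Since $5-2\notin\GK(S)$, the element $a$ acts fixed-point-freely on $T$. Hence the Sylow $2$-subgroup $T$ of $S$ is not cyclic, is not $Q_8$, and has order at least $16$.

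To obtain a contradiction, I would apply \Cref{CutOrders1}\eqref{cyclicsylow} to the group $T\rtimes\GEN{a}\le S$; this group has no elements of order $10$. Note, however, that the hypotheses there concern the Sylow $5$-subgroup $\GEN{a}$ and the normal subgroup $T$, so this does not apply directly. Instead I would use \Cref{NormalizerS5}. An element $b$ of order $4$ normalizing $\GEN{a}$ acts on $T$. Consider $\C_T(b)\ne 1$ together with the rationality of $a$ and the fact that $\N_S(\GEN{x})=\C_S(x)$ for $2$-elements $x\in\N_S(\GEN{a})$. The aim is to find a $2$-element of $T\GEN{b}$ that must be inverse semi-rational but has $\N=\C$. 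Here I will use that elements of order $4$ of $T$ which are not conjugate to their inverses must still be inverse semi-rational; this is automatic for order $4$. So the real contradiction has to come from elsewhere.

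I would try the following approach first. The quotient $\bar S=S/\Phi(T)O_3(S)$ contains an elementary abelian normal $2$-subgroup $\bar T$ on which $a$ acts fixed-point-freely, and $\bar S$ is still \cut. Moreover, by \Cref{GSylowp}\eqref{GpAbelian}-type reasoning, elements $bt$ with $t\in\bar T$ give elements of order $4$ whose centralizers and normalizers can be computed. We need $b$ to act on $\bar T$ nontrivially, which holds since $a^b=a^{\pm 2}$ permutes the $a$-eigenspaces. For $t\in\C_{\bar T}(b^2)$, the element $bt$ has order $4$. Counting the conjugates of $b$ under $\bar T\GEN{a}$ shows that some such $bt$ is conjugate to $b^{-1}$, contradicting $\N(\GEN{b})=\C(b)$ transported via \Cref{NormalizerS5}. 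This is the main obstacle, and I expect that the details will need $V$ as well. Specifically, I would use \Cref{InducedFPF} with $q=2$ or $5$ on the faithful irreducible module $V$ and the absence of $2\cdot 7$ and $5\cdot 7$ elements, together with \Cref{FrobeniusFaithful} applied to the Frobenius group $T\rtimes\GEN{a}$ acting faithfully on $V$. That lemma gives a nonzero fixed vector of $a$ in $V$, i.e. an element of order $35$, which is impossible. This last route looks cleanest: $T\rtimes\GEN{a}$ is Frobenius because $a$ acts fixed-point-freely on $T$, $V$ is faithful for it, and $7\nmid 2^k\cdot 5$. So I will commit to the \Cref{FrobeniusFaithful} argument as the main tool.

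\emph{Step 2: conclusion.} The same Frobenius argument applies to any nontrivial $a$-invariant $2$-subgroup $T\le X'$ on which $a$ acts fixed-point-freely. A Sylow $2$-subgroup of $X'$ can be chosen $a$-invariant, and by \Cref{GSylowp}\eqref{GpB} and \Cref{auxiliarpq} $a$ acts on it without fixed points, since $5-2\notin\GK$. Hence it is trivial, so $\pi(X')=\{3\}$ because $X'\ne 1$. Then $X'$ is a normal $3$-subgroup of $X$, so $X'\subseteq F(X)$. On the other hand, $F(X)$ cannot contain $a$: otherwise $a$ would centralize the $5'$-part of $F(X)$, which is $X'$, contradicting $(X',a)\ne1$. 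Therefore $F(X)=X'$.
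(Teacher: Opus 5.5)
Your final argument is correct and is essentially the paper's. You take an $a$-invariant Sylow $2$-subgroup of $X'$ and apply \Cref{FrobeniusFaithful} to the faithful module $V$, which yields an element of order $35$. The paper first descends to $\GEN{e,a}\cong C_2^4\rtimes_{\text{Fr}} C_5$ inside $\Omega(\Z(L))$, but since that lemma holds for any Frobenius group, applying it directly to $T\rtimes\GEN{a}$ is legitimate.

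A few points to tidy up. Justify the existence of the $a$-invariant Sylow $2$-subgroup: the number of Sylow $2$-subgroups of the $\{2,3\}$-group $X'$ is a power of $3$, so $\GEN{a}$ fixes one. The citations of \Cref{GSylowp} and \Cref{auxiliarpq} for fixed-point-freeness are irrelevant; that property follows just from the absence of elements of order $10$. Finally, delete the exploratory material in Step 1, including the misstatement that $O_2(X')$ is a Sylow $2$-subgroup of $S$.
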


\begin{proof}
By the arguments above, it suffices to show that $|X'|$ is odd.
	Suppose by contradiction that $2$ divides the order of $X'$. 
	Let $\mathcal S_2$ be the set of Sylow $2$-subgroups of $X'$. Then $|\mathcal S_2|$ is a power of $3$. Moreover $\GEN{a}$ acts by conjugation on $\mathcal S_2$, and therefore  normalizes  at least one Sylow $2$-subgroup, say $L$.     So  $\GEN{a}$ acts  by conjugation on  $L$ (without fixed points, because there are no elements of order $10$ in $S$). Let $L_0=\Omega(\Z(L))$, which is elementary abelian and nontrivial. 
	Then $\GEN{a}$ also acts by conjugation on $L_0$ without fixed points, so $L_0$ is a direct sum of simple $\F_2\GEN{a}$-modules of dimension $4$.  Therefore, if we take $e\in  L_0\setminus \{1\}$, generating one of them, then   $\GEN{e,a}\cong C_2^4\rtimes_{\text{Fr}} C_5$. Now we see $V$ as a $\F_7\GEN{e,a}$-module.
	As the action of $S$ on $V$ is faithful, so is the action of $\GEN{e,a}$. Then, by \Cref{FrobeniusFaithful}, $\GEN{a}$ does not act fixed-point-freely on $V$, which gives the desired contradiction.		
\end{proof}

Let $Q=X'C$. Combining the Frattini argument with \Cref{NormalizerS5} and \Cref{X35} we get
	\begin{equation}\label{S-s}
		S=X\N_S(\GEN{a})=(Q\rtimes \GEN{a})\rtimes \GEN{b}, \text{ with } |b|=4, \ a^b=a^2 \text{ and } \pi(Q)=\{3\}.
	\end{equation}
In particular, $Q$ is the unique Sylow 3-subgroup of $S$.

Let $H$ be a minimal subgroup of $S$ such that there exists an $\mathbb{F}_7H$-submodule $W$ of $V$ with $W^S\cong V$. We denote by $K$ the kernel of the action of $H$ on $W$. 
By \Cref{InducedFPF}, $H$ contains all the elements of order $5$ of $S$, i.e. $X\subseteq H$, and hence $\N_S(\GEN{a})$ contains a right transversal $T$ of $H$ in $S$, which will be fixed throughout the section.
As $K$ is normal in $H$ and the action of $S$ on $V$ is faithful we get
	\begin{equation}\label{Kcorefree}
\bigcap_{t\in T}K^t=\Core_S(K)=1.
	\end{equation}

By \Cref{HAbelCic}, all abelian normal subgroups of $H/K$ are cyclic, so \Cref{StructureFED} applies for the group. In this particular case we can say more.

\begin{fact}\label{StructureHK}
Let $F=F(H/K)$ and  let  $Z$  be  the socle of $Z(F)$. Then there exist $E,D\trianglelefteq H/K$ such that:
\begin{enumerate}[(a)]
\item $F=ED$ is a $3$-group, $Z=E\cap D$ and $D=\C_F(E)=\C_{H/K}(E)$ is cyclic.
\item $E$ is cyclic of order $3$ or extra-especial of exponent $3$;
\item $Z=\GEN{z_0K}\cong C_3$ with $w^{z_0}=w^2$ for every $w\in W$. Moreover, $Z \subseteq Z(H/K)$ and it is the unique minimal normal subgroup of $H/K$. 
\end{enumerate}  
\end{fact}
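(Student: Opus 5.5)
We begin by feeding $H/K$ into \Cref{StructureFED}, which applies because, by \Cref{HAbelCic}, every abelian normal subgroup of $H/K$ is cyclic. This gives normal subgroups $E,D$ of $H/K$ satisfying $F=ED$, $Z=E\cap D$, $D=\C_F(E)=\C_{H/K}(E)$, with the Sylow subgroups of $E$ of prime order or extraspecial, and with $D$ containing a cyclic normal subgroup $U$ of index at most $2$. What remains is to pin down the primes involved and to describe $Z$.

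The key step is to show that $F$ is a $3$-group. Since $X\subseteq H$ and $X'=F(X)$ is a non-trivial $3$-group, we start from the structure \eqref{S-s}. The group $H$ contains $Q\rtimes\GEN{a}$ modulo possibly part of $Q$, and $H/(H\cap Q)$ embeds into $\GEN{a}\rtimes\GEN{b}\cong C_5\rtimes C_4$.

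We rule out the prime $5$ in $F$ as follows. A normal $5$-subgroup of $H/K$ would centralize the $3$-part of $F$. Since $H$ contains $X$, we would get $(X',a)\subseteq K$. The elements $(x,a)$ generate $X'$ modulo $K$, by \Cref{GSylowp}\eqref{GpB}, so $X'\subseteq K$. As $X'\trianglelefteq S$, this gives $X'\subseteq\Core_S(K)=1$ by \eqref{Kcorefree}, a contradiction.

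We rule out the prime $2$ in $F$ similarly. A normal $2$-subgroup $P$ of $H/K$ lies in $D$ or $E$. Since $\GK(S)$ has no edge $2-5$, the element $aK$ acts fixed-point-freely on $P$, which forces $|P|\ge 16$ in elementary abelian sections. This is impossible because the Sylow $2$-subgroups of $S$ are cyclic of order $4$ by \eqref{S-s}.

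Once $F$ is a $3$-group, $D$ is cyclic, since $[D:U]\le 2$ is a power of $3$. Also $E$ has exponent $3$: the option of exponent $4$ disappears, and $E$ is cyclic of order $3$ or extraspecial of exponent $3$. This gives (a) and (b).

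For (c), the group $Z$ is cyclic by \Cref{HAbelCic} and elementary abelian by definition of socle, so $Z=\GEN{z_0K}\cong C_3$. Because $W_Z$ is homogeneous and faithful, $z_0$ acts on $W$ by a primitive cube root of unity in $\F_7$. Replacing $z_0$ by $z_0^{-1}$ if necessary, we get $w^{z_0}=w^2$ for all $w\in W$. This scalar action commutes with all of $H$, so $Z\subseteq Z(H/K)$.

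Uniqueness comes from the following argument. Any minimal normal subgroup of $H/K$ lies in $F$, hence is a $3$-group, and it meets $Z(F)$ non-trivially. Its intersection with $Z(F)$ is then a normal subgroup of the cyclic socle, so it contains $Z$.

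The main obstacle is the exclusion of $2$ from $\pi(F)$, since $D$ may have a non-cyclic $2$-part of index $2$ over $U$. If the fixed-point-free argument is insufficient, the fallback is the following. The element $b^2$ (or a conjugate lying in $H$) would act on the homogeneous module $W$. By faithfulness of $V$ it would have to act as $-1$ on $V$, which gives the contradiction $Z(S)\ne1$ with \Cref{Strivialcenter}.
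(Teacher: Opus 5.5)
Your proof is correct and follows the paper's overall plan: apply \Cref{StructureFED} to $H/K$, show that $F$ is a $3$-group, then read off (a)--(c). Several local steps differ, however.

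To exclude the prime $5$, the paper uses that $\GEN{aK}\trianglelefteq H/K$ forces $(X,\GEN{a})\subseteq K^t$ for every $t$ in the transversal $T\subseteq \N_S(\GEN{a})$. Hence $\GEN{a}\trianglelefteq S$, contradicting \Cref{C5notnormal}. You instead note that $X'K/K$ is a normal $3$-subgroup of $H/K$, so it is centralized by $aK\in O_5(H/K)$, and conclude $X'\subseteq \Core_S(K)=1$, contradicting \Cref{X35}. This avoids needing $T$ inside $\N_S(\GEN{a})$, at the cost of using \Cref{X35}, which itself rests on \Cref{C5notnormal}. State the step via \Cref{GSylowp}\eqref{GpB} precisely: each generator $y\in X'$ with $|ay|=5$ satisfies $y^5\in K$ because $aK$ commutes with $yK$, so $y\in K$.

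To exclude $2$, the paper uses that $Z_2$ would be a cyclic normal subgroup centralized by $aK$. You use $|S|_2=4$ from \eqref{S-s} against $|P|\equiv 1 \bmod 5$. Both work, but you should say why $aK\neq 1$: otherwise $a$ fixes $W$ pointwise and $G$ has elements of order $35$.

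For (c), you get the scalar action of $z_0$ directly from the homogeneity of $W_Z$ over $\F_7$. This is a little more direct than the paper's detour through $W_E$ and $\F_7$ being a splitting field for $E$. Your uniqueness argument ($N\cap Z(F)\neq 1$, so $N\supseteq Z$) is an equally valid alternative to the paper's ($N\subseteq \C_F(E)=D$, which is cyclic).

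Drop your final paragraph. The order argument already settles the prime $2$. The assertion that a normal $2$-subgroup lies in $D$ or $E$ is not needed and is not true in general. The fallback claim that $b^2$ acts as $-1$ on $V$ is unjustified.
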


\begin{proof} Let $E$ and $D$ be the groups given in \Cref{StructureFED} for $H/K$. 
Note that $H/K$ has an element of order $5$, since $5-7\not \in \GK(G)$. We claim that $|F|$ is odd. Otherwise, $|Z|$ is even and $Z_2$ is a normal cyclic subgroup of $H/K$. Hence, an element of order $5$ of $H/K$ acts trivially on $Z_2$, a contradiction with the fact that $G$ does not have elements of order $10$. Moreover, if $5\mid |F|$, then $\GEN{aK}$ is a normal subgroup of $H/K$. Therefore $\GEN{a,K}$ contains all the elements of order $5$ of $H$, and hence of $S$, so $X\subseteq \GEN{a,K}$ and $(X,\GEN{a})\subseteq (\GEN{a}K,\GEN{a})\subseteq K$. As $T\subseteq \N_S(\GEN{a})$, we have $(X,\GEN{a})=(X^t,\GEN{a}^t)\subseteq K^t$ for all $t\in T$. Then, by \eqref{Kcorefree}, $(X,\GEN{a})=1$,  and therefore, as $S=XN_S(\GEN{a})$,  $\GEN{a}$ is normal in $S$,   in contradiction with \Cref{C5notnormal}. Therefore, $F$ is a $3$-group and $Z\cong C_3$. 
Then statements (a) and (b) follow directly from \Cref{StructureFED}.		

Now we prove (c). 
Let $N$ be a minimal normal subgroup of $H/K$. Since $F$ is a $3$-group, $N$ is $3$-elementary abelian and hence, cyclic of order $3$. Since $E$ is a $3$-group, $E$ commutes with $N$ and therefore $N\subseteq \C_F(E)=D$, by (a). In particular $N=Z$, as $D$ is cyclic.
		
Consider the restriction to $E$ of the $\F_7(H/K)$-module $W$ and denote it by $W_E$. 
By \Cref{HAbelCic}, $W_E$ is homogeneous,
i.e. $W_E\cong U^l$ for some irreducible $\F_7E$-module $U$ and some $l\geq 1$. 
As $W$ is faithful, so are $W_E$ and $U$.
Since the exponent of $E$ is $3$, $\F_7$ is a splitting field for $E$ (as $\F_7$ contains a primitive $3$-th root of unity). In particular, $U$ is absolutely irreducible and a non-trivial central element of $E$ acts as the scalar multiplication on $U$ by an element of order $3$ of $\F_7$. Therefore, $Z=\GEN{z_0K}$ with $z_0K$ acting by scalar multiplication by $2$ on $U$, and hence, on $W$, as its restriction to $E$ is homogeneous. Since the action of $H/K$ on $W$ is faithful, $Z$ is central in $H/K$. 
	\end{proof}
	
	In the remainder of the proof, we use the notation of \Cref{StructureHK}.
%
%
	
	\begin{fact}\label{bnotH} $b\not\in H$.
	\end{fact}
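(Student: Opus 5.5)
The plan is to argue by contradiction: assume $b\in H$ and produce a nontrivial element of the Sylow $3$-subgroup $Q$ of $S$ centralized by $b$, modulo a suitable $b$-invariant normal subgroup. I would then show that this cannot happen because $b$ acts fixed-point-freely on $Q$.

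The first step is to show that $\C_Q(b)=1$. Suppose $c\in Q$ is a non-trivial element commuting with $b$; we may take $|c|=3$. Then $bc$ has order $12$, and $\GEN{bc}$ contains $\GEN{b}=\GEN{(bc)^{9}}$. Hence every element of $\N_S(\GEN{bc})$ normalizes $\GEN{b}$, and so centralizes $b$ by \Cref{NormalizerS5}. Therefore no element of $S$ maps $bc$ to $(bc)^{5}$ or to $(bc)^{-5}=(bc)^{7}$. Indeed, such an element would send $b$ to $b^{5}=b$ or $b^{7}=b^{-1}$ respectively, and would send $c$ to $c^{2}=c^{-1}$ or $c^{7}=c$ respectively. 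The first option would make it invert $c$ while normalizing $\GEN{b}$. The second would make it invert $b$. Both contradict $\N_S(\GEN{b})=\C_S(b)$, provided we check that in the first case the conjugating element also commutes with $c$ up to the required power; a cleaner way to say this is the following. Conjugation by an element of $\N_S(\GEN{bc})$ is an automorphism of $\GEN{b}\times\GEN{c}$ fixing $b$. So $|\N_S(\GEN{bc}):\C_S(bc)|\le 2$, whereas inverse semi-rationality of $bc$ in the \cut group $S$ requires index at least $|\Aut(C_{12})|/2=2$ with $bc$ not real. Index exactly $2$ would mean some element inverts $c$ and fixes $b$, mapping $bc$ to $bc^{-1}=(bc)^{5}$. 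I would refine this using the argument of \Cref{Strivialcenter}, namely that $bc$ cannot be inverse semi-rational. This is the step I expect to need the most care. The alternative I would try first is to replace the centralizing element by a central-type argument exactly as in \Cref{Strivialcenter}, applied inside $\C_S(b)\cap Q$, and use \Cref{CutOrders1}\eqref{noelements4p} or \Cref{G2=Q8} as needed.

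The second step uses the structure of $H/K$. By \Cref{StructureHK}(c) there is $z_0\in H$ with $z_0K$ of order $3$ and central in $H/K$, acting on $W$ as multiplication by $2$. Replacing $z_0$ by its $3$-part, we may assume $z_0$ is a $3$-element, so $z_0\in Q$ by \eqref{S-s}. Moreover $z_0\notin K$. Since $b\in H$ and $z_0K$ is central, $(b,z_0)\in K\cap Q$.

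Finally, let $P=Q\cap H$ and $N=Q\cap K$. Both are normalized by $b$, because $Q\trianglelefteq S$, $b\in H$ and $K\trianglelefteq H$. Also $N\trianglelefteq P$. The element $b$ acts coprimely on the $3$-group $P$, so the standard coprime action fact gives $\C_{P/N}(b)=\C_P(b)N/N$. By the first step $\C_P(b)=1$, so $b$ fixes no non-trivial element of $P/N$. But $z_0N$ is a non-trivial element of $P/N$ fixed by $b$, since $(b,z_0)\in N$ and $z_0\notin K$. This contradiction shows $b\notin H$.
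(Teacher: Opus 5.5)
Your proof is correct once Step 1 is repaired, and it takes a different route from the paper's. As written, Step 1 does not close. An element that centralizes $b$ and inverts $c$ (your ``first option'') does not contradict $\N_S(\GEN{b})=\C_S(b)$. The argument of \Cref{Strivialcenter} also does not transfer verbatim, because $c$ need not be central in $S$.

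The claim $\C_Q(b)=1$ is immediate from \Cref{CutOrders1}\eqref{noelements4p}. By \eqref{S-s}, $\GEN{b}\cong C_4$ is a cyclic Sylow $2$-subgroup of $G$, so $G$ has no element of order $12$, whereas $bc$ would be one. Alternatively, $S/Q$ is the Frobenius group of order $20$, so $\C_S(b)=\C_Q(b)\times\GEN{b}$, and no element of it can invert $c$.

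Steps 2 and 3 are fine. The coprime-action lemma can even be replaced by counting. The coset $z_0N$ is $b$-invariant, since $(z_0,b)\in N$, and it has odd size. So the $2$-group $\GEN{b}$ fixes some $y\in z_0N$. Then $y\neq 1$ is a $3$-element commuting with $b$, and $by$ has order divisible by $12$.

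Comparison with the paper: the paper takes $z$ of order $3$ in $F(X)\cap Z(Q)$. It uses $\Core_S(K)=1$ and the uniqueness of the minimal normal subgroup $Z$ of $H/K$ to get $\GEN{zK}=Z$. It then uses $S=CH$ and $V=\bigoplus_{t\in T}W^t$ to show that $z$ acts as multiplication by $2$ on all of $V$. Hence $z\in Z(S)$, contradicting \Cref{Strivialcenter}.

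Your route needs only two facts: $z_0K$ is a nontrivial $3$-element of $H/K$ centralized by $bK$, and $b$ acts fixed-point-freely on the normal Sylow $3$-subgroup $Q$. It bypasses the core-freeness of $K$, the transversal computation, \Cref{X35} and \Cref{Strivialcenter}, and is shorter and more local. The paper's argument instead follows the module-theoretic pattern used elsewhere in the section (compare \Cref{centralizerb}), where the contradiction comes from showing that some element acts as a scalar on the whole induced module.
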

	
\begin{proof} 
Assume by contradiction that $b\in H$. 
Recall that $Q$ is the unique Sylow $3$-subgroup of $S$ and $Q=X'C$.
Moreover, $1\ne F(X)=X'$, by \Cref{X35}.
Thus there is an element $z$ of order $3$ in $F(X)\cap Z(Q)$.
Since $Z(F(X))K/K$ is an abelian normal subgroup of $H/K$, it is a cyclic subgroup contained in $F(H/K)$. By \Cref{StructureHK}, $Z$ is the unique minimal normal subgroup of $H/K$, and hence $Z$ is the unique subgroup of $Z(F(X))K/K$ of order $3$. It follows that $\GEN{zK}=Z$ or $z\in K$. Suppose that $z\in K$. Let $s\in S$. Then $s=b^\varepsilon xc$ for some $\varepsilon\in\{0,1,2,3\}$, $x\in X$ and $c\in C$. As $C\subseteq Q$, $z\in Z(Q)$ and $b,x\in H$, we have $z^{c^{-1}}=z\in K= K^{b^{\varepsilon}x}$. Hence, $z\in K^{b^\varepsilon xc}=K^s$. This shows that $z$ is a non-trivial element of $\Core_S(K)$, in contradiction with \eqref{Kcorefree}. So $\GEN{zK}=Z$. By \Cref{StructureHK}, replacing $z$ by $z^{-1}$ if needed, we may assume that $w^z=w^2$ for every $w\in W$.

As $X\GEN{b}\subseteq H$, it follows that $S=CH$. 
Since $Z$ is central in $H/K$ and $(z,C)=1$, it follows that $z^sK=zK$ for every $s\in S$, i.e. there is $k_s\in K$ such that $z^s=zk_s$. 
Let $v\in V$ be arbitrary. Write $v=\prod_{t\in T}w_t^t$, with $w_t\in W$ for each $t\in T$. Then 
$$v^z=\prod_{t\in T}w_t^{tz}=\prod_{t\in T}w_t^{z^{t^{-1}}t}=\prod_{t\in T}(w_t^{zk_{t^{-1}}})^{t}=\prod_{t\in T}(w_t^{2})^{t}=v^2.$$
As the action of $S$ on $V$ is faithful, this implies that $z\in Z(S)$ in contradiction with \Cref{Strivialcenter}.
\end{proof}

	


\begin{fact}\label{centralizerb}
	$7\in \pi(\C_G(b))$.
\end{fact}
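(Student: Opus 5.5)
The plan is to produce a non-trivial element of $V$ fixed by $b$; since $V$ is the Sylow $7$-subgroup of $G$, this gives $7\in\pi(\C_G(b))$. I will use the decomposition $V=W^S=\bigoplus_{t\in T}W^t$, where $T\subseteq \N_S(\GEN{a})$ is the fixed transversal, and argue as in the proof of \Cref{InducedFPF}. The element $b$ permutes the right cosets of $H$ by right multiplication. Since $b\notin H$ by \Cref{bnotH}, the orbit of the coset $H$ under $\GEN{b}$ has length $2$ or $4$. I split into two cases according to whether $b^2\in H$.

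\emph{Case $b^2\notin H$.} Then $H,Hb,Hb^2,Hb^3$ are four distinct cosets, so $W,W^b,W^{b^2},W^{b^3}$ are distinct summands of the direct decomposition of $V$, up to the identification $W^{hb^i}=W^{b^i}$. Take $w\in W\setminus\{1\}$ and put $v=ww^bw^{b^2}w^{b^3}$. Then $v\neq 1$ and $v^b=v$ because $b^4=1$, so $v$ is a non-trivial element of $V$ fixed by $b$.

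\emph{Case $b^2\in H$.} Here $W$ and $W^b$ are distinct summands and $W^{b^2}=W$. It suffices to find $w\in W\setminus\{1\}$ with $w^{b^2}=w$, because then $v=ww^b\ne 1$ satisfies $v^b=w^bw^{b^2}=w^bw=v$. To find such a $w$ I will use \Cref{FrobeniusFaithful} applied to $D=\GEN{a,b^2}\subseteq H$, recalling that $X\subseteq H$. Since $a^b=a^2$, we get $a^{b^2}=a^4=a^{-1}$, so $D\cong D_{10}$ is a Frobenius group with complement $\GEN{b^2}$, and $7\nmid 10$. To make $W$ a faithful $\F_7D$-module, I first show $a\notin K$. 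Suppose $a\in K$. For every $t\in T\subseteq\N_S(\GEN{a})$ we have $a^{t^{-1}}\in\GEN{a}\subseteq K$, hence $a\in K^t$. Then $a\in\Core_S(K)$, contradicting \eqref{Kcorefree}. The only normal subgroups of $D_{10}$ are $1$, $\GEN{a}$ and $D$, so $D\cap K=1$ and $D$ acts faithfully on $W$. Now \Cref{FrobeniusFaithful} yields $w\in W\setminus\{1\}$ with $w^{b^2}=w$, which completes this case.

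I expect the main obstacle to be the second case, specifically checking that $D$ acts faithfully on $W$ (that is, $a\notin K$) so that \Cref{FrobeniusFaithful} applies. The core-freeness \eqref{Kcorefree}, together with the choice $T\subseteq\N_S(\GEN{a})$, is exactly what handles this. The remaining steps are routine bookkeeping with the direct sum decomposition of the induced module.
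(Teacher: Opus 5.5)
Your proof is correct. The case $b^2\notin H$ is the same as the paper's first step, but you handle the case $b^2\in H$ by a genuinely different route.

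The paper argues by contradiction. If $b$ fixes no non-trivial element of $V$, then $ww^bw^{b^2}w^{b^3}=1$ for all $w\in W$, which forces $b^2\in H$. Then $(ww^{b^2})^{-1}=(ww^{b^2})^b\in W\cap W^b=1$ shows that $b^2$ acts on $W$ as inversion. So $Kb^2$ is a central involution of $H/K$, which contradicts \Cref{StructureHK}: there $F(H/K)$ is a $3$-group and $Z\cong C_3$ is the unique minimal normal subgroup of $H/K$.

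You instead construct the fixed point directly. You apply \Cref{FrobeniusFaithful} to $\GEN{a,b^2}\cong D_{10}\subseteq H$ to get $w\in W\setminus\{1\}$ fixed by $b^2$, and then take $ww^b$.

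The two routes buy different things. The paper's step is a two-line consequence of the structure of $H/K$ already established. Yours uses only local information: $X\subseteq H$, $a^{b^2}=a^{-1}$, and the faithfulness of $\GEN{a,b^2}$ on $W$. So it does not invoke \Cref{StructureHK} at this point, although \Cref{bnotH}, which you use, does depend on it.

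Your faithfulness check can also be shortened. Since $G$ has no elements of order $5\cdot 7$, the element $a$ fixes no non-trivial element of $V$. Hence $a$ cannot act trivially on the non-trivial subgroup $W$, so $a\notin K$ without appealing to core-freeness.
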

\begin{proof}
Assume by contradiction that $7\not\in \pi(\C_G(b))$. 
For every $w \in W$, $ww^bw^{b^2}w^{b^3}$ is a $7$-element in $\C_G(b)$ and hence $ww^bw^{b^2}w^{b^3}=1$. 
This implies that $W+W^b+W^{b^2}+W^{b^3}$ is not a direct sum and as $V=\Ind^G_H(W)$, it follows that $H,Hb,Hb^2,Hb^3$ are not pairwise different. Thus $b^2\in H$. However $b\not\in H$, by \Cref{bnotH}, and hence $(ww^{b^2})^{-1} = (ww^{b^2})^b\in W\cap W^b=1$. 
Therefore $w^{b^2}=w^{-1}$ for every $w \in W$.
As the action of $H/K$ on $W$ is faithful, $Kb^2$ is a central element of order $2$ of $H/K$, in contradiction with \Cref{StructureHK}.
\end{proof}

 Now we put together all the information gathered to obtain a contradiction. By \Cref{centralizerb}, $G$ has an element of order $4\cdot 7$ while a Sylow $2$-subgroup of $G$ is cyclic by (\ref{S-s}). This is a contradiction with \Cref{CutOrders1}(\ref{noelements4p}).

\section{Graph (t)}\label{Section-t}

In this section we prove that the graph (t) in \Cref{QuestionE} is not the GK-graph of a finite solvable \cut group. The strategy is similar to the one of the previous section, so we fix a solvable \cut group $G$ of minimal order with 
\begin{equation}\label{Graph-t}
\GK(G)=\begin{tikzpicture}[local bounding box=bb,baseline=(bb.center)]
\node[label=west:{$2$}] at (0,0.5) (2){};
\node[label=east:{$3$}] at (0.5,0.5) (3){};
\node[label=west:{$5$}] at (0,0) (5){};
\node[label=east:{$7$}] at (0.5,0) (7){};
\foreach \p in {2,3,5,7}{
	\draw[fill=black] (\p) circle (0.075cm);
}
\draw (2)--(3);
\draw (2)--(5);
\draw (2)--(7);
\draw (3)--(5);
\end{tikzpicture} 
\end{equation} 

Arguing as in the previous case we get again $G=V\rtimes S$ for $V$ an elementary abelian Sylow $p$-subgroup of $G$ and $S$ a Hall $p'$-subgroup of $G$, with $p\in \{5,7\}$. Moreover, $F(G)=V$ and the action of $S$ on $V$ is faithful and irreducible. The symmetry  between the graphs (s) and (t) is reflected in the first three facts that we prove, but the proofs are completely different.
 
\setcounter{fact}{0}

\begin{fact}\label{Casotp=5}
	$p=5$.
\end{fact}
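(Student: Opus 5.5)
We argue by contradiction: assume $p=7$, so $V$ is an elementary abelian Sylow $7$-subgroup, $G=V\rtimes S$ with $S$ a Hall $7'$-subgroup acting faithfully and irreducibly on $V$. In graph (t) the vertex $7$ is adjacent only to $2$. Hence $\GK(S)=(5-2-3-5)$, a triangle on $\{2,3,5\}$, and no element of $S$ of order $3$ or $5$ fixes a non-trivial vector of $V$. The goal is to show that some element of order $3$ or $5$ must have a non-zero fixed point in $V$.

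\textbf{Step 1: Sylow structure of $S$.} Since $V$ is a normal $7$-subgroup and $G$ has no elements of order $3\cdot7$ or $5\cdot7$, \Cref{CutOrders1}\eqref{cyclicsylow} (applied to $G$ with $q=3$ and $q=5$) gives $|G_3|=3$ and $|G_5|=5$. Because $S$ has elements of order $3\cdot 5$, there is a cyclic subgroup $\GEN{c}$ of $S$ of order $15$.

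\textbf{Step 2: a fixed point for this subgroup.} Every subgroup of $S$ of order $15$ is cyclic, so it acts on $V$ fixed-point-freely on each non-trivial element. I would exploit that $c$ must be inverse semi-rational. By \Cref{CutOrders1}\eqref{5rational}, $c^3$ (of order $5$) is rational in $G$, so $\N_G(\GEN{c})/\C_G(c)$ has order $4$ or $8$ and its image in $\Aut(\GEN{c})\cong C_2\times C_4$ projects onto $\Aut(\GEN{c^3})\cong C_4$. The elements realising this are $2$-elements normalising $\GEN{c}$, giving a subgroup $\GEN{c}\rtimes\GEN{y}$ with $y$ a $2$-element acting on $\GEN{c^3}$ as an automorphism of order $4$.

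Now consider $V$ as an $\F_7\GEN{c,y}$-module. Since $c$ acts fixed-point-freely, $V$ restricted to $\GEN{c}$ is a sum of faithful simple $\F_7C_{15}$-modules. The multiplicative order of $7$ modulo $15$ is $4$, so these modules have dimension $4$ and their eigenvalues are the Galois orbit $\{\zeta,\zeta^7,\zeta^{4},\zeta^{13}\}$. The group $\GEN{y}$ permutes the eigenvalue classes via its action on $\GEN{c}$, and the subgroup $\GEN{7}\le(\mathbb{Z}/15)^\times$ must be compatible with the images of $y$ and of the inverse-semi-rationality elements. I would then count the orbits of $\GEN{y}\cdot\GEN{7}$ on the primitive $15$-th roots of unity to produce a non-zero vector fixed by $y$ (or by $y^2$) combined with an element of order $3$ or $5$. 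The aim is either an element of order $7\cdot3$ or $7\cdot5$, or, alternatively, to use \Cref{FrobeniusFaithful} for a Frobenius subgroup such as $C_5\rtimes C_4$ or $C_3\rtimes C_2$ acting faithfully on $V$, which forces an element of order $2$ or $4$ to fix a vector. That second outcome is allowed, however, since $2-7$ is an edge.

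\textbf{Step 3: the main obstacle.} The real difficulty is that a genuine contradiction requires fixed points for elements of odd order, and \Cref{FrobeniusFaithful} only produces fixed points for complements. So the plan is to find a Frobenius subgroup whose complement has order $3$ or $5$. For example, a $2$-subgroup or $3$-subgroup normalised by an element of order $5$ acting fixed-point-freely yields, as in \Cref{X35}, a group $C_2^4\rtimes_{\text{Fr}}C_5$ or $C_3^4\rtimes_{\text{Fr}} C_5$ inside $S$. Such a subgroup would contradict the absence of $5-7$ edges. If instead $\GEN{a}$ (of order $5$) is normal in $S$, I would analyse $\C_S(a)$, which contains elements of order $2$ and $3$, together with the splitting-field argument used in \Cref{C5notnormal}. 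That argument should force the Sylow $3$-subgroup into a cyclic faithful $\lambda$-part and then clash with the orbit count of Step 2.

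I expect the case where the subgroup generated by the elements of order $5$ has trivial derived subgroup to be the hardest part of the proof.
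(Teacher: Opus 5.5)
There is a genuine gap: the proposal never reaches a contradiction, and the routes you sketch cannot be completed.

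Step~2, as you note yourself, at best yields fixed points of $2$-elements. These are harmless because $2-7$ is an edge.

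In Step~3, a subgroup $C_3^4\rtimes_{\text{Fr}}C_5$ cannot exist, since $|S_3|=3$ by your own Step~1. The argument of \Cref{X35} produced $C_2^4\rtimes_{\text{Fr}}C_5$ only because graph (s) has no elements of order $10$, so an element of order $5$ acts fixed-point-freely on every $2$-subgroup it normalizes. Here $2-5$ is an edge. For the subgroup $Y$ generated by the elements of order $5$, \Cref{GSylowp} only gives a fixed-point-free action on the section $Y'/Y''$. That section is not a subgroup acting faithfully on $V$, so \Cref{FrobeniusFaithful} does not apply.

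In the case $\GEN{a_5}\trianglelefteq S$, the argument of \Cref{C5notnormal} ends with ``$G_3$ is cyclic, so there is no element of order $3\cdot 7$''. That contradicted the edge $3-7$ of graph (s). Here $G_3$ is already cyclic of order $3$ and $3-7$ is not an edge, so nothing is gained.

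More fundamentally, your target (an element of order $3$ or $5$ with a non-zero fixed point) cannot be reached from the hypotheses you use. Let $S=(\GEN{c}\rtimes\GEN{t})\times\GEN{z}$ with $|c|=15$, $|t|=4$, $|z|=2$ and $c^t=c^2$; one checks that $S$ is \cut with GK-graph the triangle on $\{2,3,5\}$. Let $c$ act on $M=\F_{7^4}$ as multiplication by a primitive $15$th root of unity and $t^2$ as $x\mapsto x^{49}$. Put $V=\Ind_{\GEN{c,t^2}}^{\GEN{c,t}}(M)\cong\F_7^8$ and let $z$ act as $-1$. Then:
\begin{itemize}
\item $V$ is faithful and irreducible, and $V_{\GEN{c}}=M\oplus Mt$ is the sum of the two non-isomorphic faithful simple $\F_7C_{15}$-modules.
\item No element of order $3$ or $5$ of $S$ (all lie in $\GEN{c}$) fixes a non-zero vector, while $t^2$ fixes $\F_{49}\subseteq M$.
\item Hence $V\rtimes S$ is solvable with GK-graph (t) and $p=7$, and every element of $S$, in particular every element of order $15$, is inverse semi-rational.
\end{itemize}
What breaks is the inverse semi-rationality of the elements of $V$. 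The elements of order $3$ of $S$ are $c^{\pm5}$, and each acts on $M$ and on $Mt$ by different scalars in $\{2,4\}$. So $m+m't$ with $m,m'\neq 0$ is not inverse semi-rational.

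The missing idea is therefore to use that every element of $V$ is inverse semi-rational, i.e.\ $V=\bigcup_{|x|=3}V_x(2)$; the contradiction is not a fixed point. The paper proceeds this way:
\begin{itemize}
\item $a_3$ is rational in $S$, since otherwise $S$ has the non-\cut quotient $C_{12}$.
\item $\GEN{a_3}$ is not normal, since otherwise the covering forces $a_3$ to act as a scalar, hence to be central.
\item The covering together with \Cref{InducedFPF} shows that $V$ is not induced from a proper subgroup. So $F(S)$ is as in \Cref{StructureFED}, and all elements of order $3$ lie in $E_2\rtimes\GEN{a_3}$.
\item Rationality gives $|V_x(2)|^2\le|V|$, and counting gives $|V|<4|E_2|^2$. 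With the homogeneity of $V_{E_2}$ and $4\mid\dim V$, this forces $\dim V=4$.
\item In $\GL(4,7)$ the normalizer of a subgroup of order $5$ has a central Sylow $3$-subgroup, contradicting the rationality of $a_3$.
\end{itemize}
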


\begin{proof}
It suffices to show that $p\ne 7$ and again we argue by contradiction, so suppose $p=7$.
Let $\{q,r\}=\{3,5\}$. 
By \Cref{CutOrders1}\eqref{cyclicsylow}, $G_q$ and $G_r$ are cyclic of orders $q$ and $r$, respectively.

Let $X$ be the group generated by the elements of order $q$ of $S$. 
By \Cref{GSylowp}, $X=X'\rtimes C_q$. 
We claim that $X'=O_2(X)$. Otherwise $r\in \pi(X')$ and $X$ has a normal subgroup $N$ with $N=\GEN{x,O_2(X)} \subseteq X'$ and $r=|x|$. Then $N/O_2(X)$ is a normal cyclic subgroup of order $r$ in $X/O_2(X)$ and hence, $N/O_2(X)$ commutes with every element of order $q$ in $X/O_2(X)$. In particular, $N/O_2(X)$ is central in $X/O_2(X)$. Thus, there exists $O_2(X)\le L\leq X$ such that $r\nmid |L|$ and 
	$$X/O_2(X)=N/O_2(X) \times L/O_2(X).$$ 
Hence, every element of order $q$ of $X/O_2(X)$ belongs to $L/O_2(X)$, so $X=L$, a contradiction. 

Now let $a\in S$ of order $3\cdot 5$. Let $X$ (respectively, $Y$) be the group generated by the elements of order $3$ (respectively, $5$) of $S$. 
By the previous paragraph, $X'=O_2(X)$, $Y'=O_2(Y)$, $X=X'\rtimes \GEN{a_3}$ and $Y=Y'\rtimes \GEN{a_5}$. 
By the Frattini argument, $S=Y\N_S(\GEN{a_5})$ and $\N_S(\GEN{a_5})=X_0\N_{\N_S(\GEN{a_5})}(\GEN{a_3})$, where $X_0$ is the group generated by the elements of order $3$ of $\N_S(\GEN{a_5})$ and hence $X_0\subseteq X$. 
Then 
	\[S=Y\N_S(\GEN{a_5})=XY\N_{\N_S(\GEN{a_5})}(\GEN{a_3})=
	XY\N_S(\GEN{a})  =X'Y'\N_S(\GEN{a}).\]

\textbf{Claim 1}. $a_3$ is rational in $S$ and $\GEN{a_3}$ is not normal in $S$.

Assume that $a_3$ is not rational in $S$. Then, as  $a$ is inverse-semi-rational $\N_S(\GEN{a})/\C_S(a)$ is cyclic of order $4$ generated by an element $t\C_S(a)$ for some $t\in S$ with $a_3^t=a_3$ and $a_5^t=a_5^2$.  
Therefore, 
\[S=X'Y'\N_S(\GEN{a})=X'Y'(\GEN{a}\times \C_S(a)_2)\GEN{t},\quad 
O_2(S)=X'Y'\C_S(a_2) \qand S/O_2(S)\cong \GEN{a}\rtimes \GEN{t}.\]
Then $S/\GEN{a_5}O_2(S)\cong \C_{12}$ which is not \cut, a contradiction. 
So $a_3$ is rational in $S$.

Suppose that $\GEN{a_3}$ is normal in $S$. Then $a_3$ and $a_3^2$ are the unique $3$-elements of $S$. Since every element $v$ of $V$ is inverse semi-rational, either $v^{a_3}=v^2$ or $v^{a_3^2}=v^2$. In the second case $v^{a_3}=v^4$. If $v_1,v_2\in V\setminus\{1\}$ with $v_1^{a_3}=v_1^2$ and $v_2^{a_3}\neq v_2^2$, then $(v_1v_2)^{a_3} = v_1^2 v_2^4\not\in \{(v_1v_2)^2,(v_1v_2)^4\}$, a contradiction. Therefore, either $v^{a_3}=v^2$ for every $v\in V$, or $v^{a_3}=v^{4}$ for every $v\in V$. 
As $V$ is faithful, this means that $a_3$ is central in $S$ and hence it is not rational in $S$, in contradiction with the previous paragraph. This finishes the proof of Claim 1.

\medskip
\textbf{Claim 2}. If $V=W^S$ for some subgroup $H$ of $S$ and an $\F_7H$-module $W$, then $H=S$. 

To prove this we first prove that $H$ contains $\N_S(\GEN{a_3})\setminus \C_S(a_3)$. Indeed, let $t\in \N_S(\GEN{a_3})\setminus \C_S(a_3)$ and suppose that $t\not \in H$. Then $a_3^t=a_3^{-1}$ and there is a right transversal $T$ of $H$ in $S$ containing $1$ and $t$. Therefore  $V=W^S=\bigoplus_{x\in T}W^x$.
Fix $w\in W\setminus \{1\}$. 
As $ww^t$ is inverse semi-rational in $G$, there exists $s\in S$ such that $(ww^t)^s=(ww^t)^2$. 
We can assume that $s$ is a $3$-element of $S$ and hence that $s$ has order $3$. 
By \Cref{InducedFPF}, $s\in X\subseteq H$,  so $w^s=w^2$ and $w^{s^{t^{-1}}}=w^2$. 
Since $s\in X=X'\rtimes \GEN{a_3}$, we write $s=x\delta$ for some $x\in\GEN{a_3}\setminus \{1\}$ and $\delta \in X'\trianglelefteq S$. 
Thus, $$w^2=w^s=w^{x\delta} \qand w^2=w^{s^{t^{-1}}}=w^{x^{t^{-1}}\delta^{t^{-1}}}=w^{{x^2}\delta^{t^{-1}}}.$$ 
We define $\delta'=(\delta^x \delta)^{-1}\delta^{t^{-1}}\in X'$ so that 	
$$w^2=w^{x^2\delta^x \delta \delta'}=w^{(x\delta)^2\delta'}=w^{s^2\delta'}=(w^4)^{\delta'}.$$ 
It follows that $w^{\delta'}=w^4$ and hence $3\mid |\delta'|$, a contradiction, since $\delta'$ is a $2$-element. 
So, indeed $\N_S(\GEN{a_3})\setminus \C_S(\GEN{a_3})\subseteq H$. 

Now let $t\in \N_S(\GEN{a_3})\setminus \C_S(\GEN{a_3})$, which exists by Claim 1, and let $x\in \C_S(a_3)$. 
Then $xt\in \N_S(\GEN{a_3})\setminus \C_S(\GEN{a_3})\subseteq H$ so $x\in H$. This proves that  
$\N_S(\GEN{a_3})=\C_S(a_3)\GEN{t}\subseteq H$. 
Moreover, $X\subseteq H$, by \Cref{InducedFPF} and, from the Frattini argument, we get $S=X\N_S(\GEN{a_3})=H$, as desired. This finishes the proof of the Claim 2.

\medskip
As the action of $S$ on $V$ is faithful, Claim 2 and \Cref{HAbelCic}, imply that $V_N$ is homogeneous for every normal subgroup $N$ of $S$ and if $N$ is abelian, then it is cyclic. Hence the Fitting subgroup of $S$ is as in \Cref{StructureFED}. We adopt the notation of that proposition, i.e. $F=F(S)$, $F=ED$, $Z=E\cap D$, $U\le D$, etc. 
Since, by Claim~1, $\GEN{a_3}$ is not normal in $S$, $X'$ is a non-trivial normal $2$-subgroup of $S$ and hence the order of $F$ is even.
Then $Z_2$ is cyclic of order $2$ and either $E_2=Z_2$ or $E_2$ is extra-special of exponent $2$ or $4$.

\medskip
\textbf{Claim 3}. Every element of order $3$ of $S$ centralizes $D$. 

Let $x$ be an element of order $3$ of $S$ and $y\in D$. 
Since $U$ is normal in $S$ and cyclic of order $2^k$ or $2^k\cdot 5$ for some $k\in\mathbb{N}$, $x$ centralizes $U$. Moreover, $[D:U]\le 2$ and hence there exists $u\in U$ such that $y^x=yu$. Then $y=y^{x^3}=yu^3$, so $u^3=1$ and hence $u=1$ and the claim follows.
\medskip

We show that $X'\subseteq E_2$ and $X\subseteq E_2\rtimes \GEN{a_3}$. Indeed, $X'\subseteq F$, as $X'$ is a  normal $2$-subgroup of $S$. Thus $X=X'\rtimes \GEN{a_3}\subseteq F\rtimes \GEN{a_3}=ED\rtimes \GEN{a_3}$. Since $D$ commutes with $\GEN{a_3}$ and $E$ we have that $E\rtimes \GEN{a_3}$ contains all the elements of order $3$ of $F\rtimes \GEN{a_3}$, i.e., $E\rtimes \GEN{a_3}$ contains all the elements of order $3$ of $S$. Then $X\subseteq E\rtimes \GEN{a_3}$, so $X'\subseteq E_2$ and $X\subseteq E_2\rtimes \GEN{a_3}$.

Let $x\in S$ be an arbitrary element of order $3$. By Claim 1 and since $\GEN{x}$ is conjugate to $\GEN{a_3}$ in $S$, $x$ is rational in $S$. In particular, there is $t\in S$ with $x^t=x^{-1}$, and hence, the map $v\mapsto v^{t^{-1}}$ is a bijection from $V_{x}(2)$ to $V_{x}(4)=V_{x^{-1}}(2)$.  Thus, $2\cdot dim_{\F_7}V_{x}(2)\leq dim_{\F_7}V$ and hence $|V_{x}(2)|^2\leq |V|.$ 

On the other hand, as every $v\in V$ is inverse semi-rational in $S$, for every $v\in V$, there is $x\in S$ of order $3$ such that $v^x=v^2$, i.e. 
\[V\subseteq \bigcup_{x\in S, \ |x|=3} V_{x}(2).\]
So, if $k$ is the number of elements of order $3$ in $S$, then 
$|V|-1\leq k(|V|^{1/2}-1)$, so that $|V|^{1/2}<k$. 
Moreover, as $X\subseteq E_2\rtimes \GEN{a_3}$, every element of order $3$ of $S$ is of the form $ea_3$ or $ea_3^2$ with $e\in E_2$. Hence, $|V|^{1/2}< k\le 2\cdot|E_2|$. We conclude that $|V|<4\cdot |E_2|^2$.

Consider $V$ as an $\F_7S$-module and observe that $E_2$ is a normal subgroup of $S$, since $E$ is normal in $S$ and $E_2$ is characteristic in $E$.    As
  mentioned in the paragraph prior to Claim 3, this implies that $V_{E_2}$ is homogeneous, i.e. $V_{E_2}\cong W^n$ for an irreducible $\F_7E_2$-module $W$ and some positive integer $n$.

As the exponent of $E_2$ is $2$ or $4$ and $\F_{7^2}$ has a fourth root of unity, $K:=\F_{7^2}$   is a splitting field for $E_2$. 
Therefore, $W\otimes_{\F_7}K$ has an absolutely irreducible $KE_2$-module $M$ and the constituents of $W\otimes_{\F_7}K$ are the Galois conjugates of $M$ over $\F_7$. Let $m$ denote the number of these Galois conjugates. Then $m\le 2$ and as $V_S$ is faithful, so are $W_{E_2}$ and $M_{E_2}$. 
The degree of an irreducible and faithful ordinary character of $E_2$ is precisely $e$ where $e^2=|E_2:Z_2|$, so $dim_K M=e$. 
Therefore, 
$$\dim_{\F_7} V = n \dim_{\F_7} W = n \dim_K W\otimes_{\F_7}K = nm\dim_K(M)=nme.$$ 
Now since $|E_2|=2e^2$ we have that
	\begin{equation}\label{e} 7^{nme}=|V|<4 \cdot |E_2|^2 = 16 e^4. \end{equation}
As $7^{2\ell}\ge 16\ell^4$ for every positive integer $\ell$, necessarily $n=m=1$ and hence $e=\dim_{\F_7} V$ and $7^e<16e^4$. The latter implies that $e\le 4$ and, as $e$ is a power of $2$, that $dim_{\mathbb{F}_7}V\in \{1,2,4\}$.

However, as $S$ has a cyclic subgroup of order $5$ acting without fixed points on $V\setminus \{1\}$, necessarily $4$ divides $dim_{\mathbb{F}_7}V$ and thus, $dim_{\mathbb{F}_7}V=4$. Then $S\leq \GL(4,7)$. Note that all subgroups of order $5$ of $\GL(4,7)$ are conjugate and if $C$ is one of them, then the unique Sylow $3$-subgroup of $\N_{\GL(4,7)}(C)$ is central in $\GL(4,7)$. Therefore, $a_3\in Z(S)$, in contradiction with Claim 1.
\end{proof}

So $V$ is an elementary abelian normal Sylow $5$-subgroup of $G$ and $S$ is a Hall $5'$-subgroup of $G$ acting faithfully and irreducibly on $V$. Clearly $\GK(S)= (3-2-7)$. Moreover, by \Cref{CutOrders1}\eqref{cyclicsylow}, the Sylow $7$-subgroups of $S$ have order $7$.

\begin{fact}\label{C7notnormal}
	The Sylow $7$-subgroups of $S$ are not normal in $S$.
\end{fact}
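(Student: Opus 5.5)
The plan is to argue by contradiction, assuming that $S$ has a normal Sylow $7$-subgroup, and to produce an element of order $3\cdot 5$ in $G$. This is impossible because $3-5$ is not an edge of \eqref{Graph-t}. The tool will be \Cref{FrobeniusFaithful}, used as in the proof of \Cref{X35}.

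First, suppose that a Sylow $7$-subgroup of $S$ is normal in $S$. Since $\GK(S)=(3-2-7)$ and $S$ is a solvable \cut group (being isomorphic to $G/V$), \Cref{rationalS2} applies. It gives
\[S=\GEN{a}\rtimes(\GEN{b}\times Q),\qquad |a|=7,\ |b|=3,\ a^b=a^2,\]
with $Q$ a rational $2$-group.

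Next, consider the subgroup $\GEN{a,b}=\GEN{a}\rtimes\GEN{b}$, which has order $21$. Since $a^b=a^2\neq a$ and $\GEN{a}$ has prime order, $b$ acts fixed-point-freely on $\GEN{a}\setminus\{1\}$. Hence $\GEN{a,b}\cong C_7\rtimes_{\text{Fr}} C_3$ is a Frobenius group with kernel $\GEN{a}$ and complement $\GEN{b}$. (It is also normal in $S$, because $\GEN{b}$ is central modulo $\GEN{a}$, although only the Frobenius structure is needed.)

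Then regard $V$ as an $\F_5\GEN{a,b}$-module. The action of $S$ on $V$ is faithful, so the action of $\GEN{a,b}$ is faithful as well. Moreover, the characteristic $5$ does not divide $21$. \Cref{FrobeniusFaithful} therefore yields some $v\in V\setminus\{1\}$ with $v^b=v$. Since $v$ has order $5$, $b$ has order $3$, and the two commute, $vb$ is an element of $G$ of order $15$. This contradicts \eqref{Graph-t}.

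I do not expect a genuine obstacle. The only points to verify are the hypotheses of \Cref{FrobeniusFaithful}, namely the Frobenius structure of $\GEN{a,b}$ and the faithfulness of its action, and both are immediate from the form given by \Cref{rationalS2}.
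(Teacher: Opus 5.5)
Your argument fails at the last step, because $3-5$ \emph{is} an edge of graph (t). The edges of (t) are $2-3$, $2-5$, $2-7$ and $3-5$, so the only missing edges are $3-7$ and $5-7$. An element of order $15$ in $G$ is therefore not a contradiction; $G$ must contain one. The earlier steps are correct: \Cref{rationalS2} applies, $\GEN{a,b}\cong C_7\rtimes_{\text{Fr}}C_3$, and \Cref{FrobeniusFaithful} gives a nonzero $v$ fixed by $b$. They just prove nothing useful.

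The problem is structural, not a slip that can be patched. \Cref{FrobeniusFaithful} only produces fixed points of a Frobenius \emph{complement}. To reach a forbidden product you need a Frobenius subgroup whose complement has order $7$, as the paper uses in \Cref{facttX27} and at the end of the section ($C_3^6\rtimes_{\text{Fr}}C_7$ and $C_2^3\rtimes_{\text{Fr}}C_7$). When $\GEN{a}\trianglelefteq S$, no subgroup $N\rtimes_{\text{Fr}}\GEN{a}$ with $N$ a $\{2,3\}$-group can exist, since $[N,a]\subseteq N\cap\GEN{a}=1$. So this tool cannot settle the normal case.

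The missing idea is to exploit rationality of the $5$-elements. By \Cref{CutOrders1}\eqref{5rational}, $V_S$ has the eigenvector property, and the witnesses can be taken to be $2$-elements of $S$. The paper's argument then runs as follows:
\begin{enumerate}
\item Since $a$ acts fixed-point-freely, each $v\neq 0$ generates a $6$-dimensional $\F_5\GEN{a}$-module with basis $va^{i}$, $0\le i\le 5$, and $\sum_{i=0}^{6}va^i=0$.
\item A $2$-element $g$ with $vg=2v$ acts on this module either as $2I$ (if $g$ centralizes $a$) or as an explicit matrix $A$ (if $a^g=a^{-1}$).
\item Also test the vector $u=va^{-1}+2v$, which is fixed up to the factor $2$ by some $2$-element $h$. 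If the inverting case occurred for both $v$ and $u$, the image of $\GEN{g,h}$ on this module would be $\GEN{A,U^{-1}AU}$ for an explicit change-of-basis matrix $U$. That group has order $2^2\cdot 3^2\cdot 7$, forcing $9\mid |S|$, which is impossible.
\item Hence $V_{\C_Q(a)}$, and therefore $V_Q$, has the eigenvector property. \Cref{VS2} then gives $V\rtimes Q\cong \MM\wr K$.
\item This yields $g\in Q$ with $\dim V_g(2)=1$, whereas the action of $g$ on $v_1\F_5\GEN{a}$ forces $\dim V_g(2)\ge 3$.
\end{enumerate}
So the contradiction is a clash of eigenspace dimensions for $2$-elements, not a forbidden edge involving $3$.
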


\begin{proof} 
In this proof we use additive notation for $V$, so that we consider $V$ as a (right) $\F_5S$-module. As, by \Cref{CutOrders1}\eqref{5rational}, every element of $V$ is rational in $G$, $V_S$ has the eigenvector property. 

We fix a Sylow $7$-subgroup $\GEN{a}$ of $S$.
The action of $\GEN{a}$ on $V$ is fixed-point-free, in particular, every non-zero submodule of $V_{\F_5\GEN{a}}$ is faithful, so that its simple submodules have dimension $6$. Using this it is easy to see that if $v\in V\setminus \{1\}$, then $v,va,va^2,\dots,va^5$ is a basis of $v\F_5\GEN{a}$.
Indeed, $v\F_5\GEN{a}$ is generated by $v,v^a,\dots,v^{a^6}$ and $w=v+va+\dots+va^6$ commutes with $a$. Thus 
	\[v+va+\dots+va^6=w=0,\]
as $G$ has not elements of order $5\cdot 7$.  In particular, $\{va^{i-1}\}$ for $i=1,\dots,6$ is a basis of $v\F_5\GEN{a}$.

Suppose that $\GEN{a}$ is normal in $S$. Then $S$ satisfies the hypothesis of \Cref{rationalS2}. So $S=\GEN{a}\rtimes (\GEN{b}\times Q)$ with $|b|=3$, $a^b=a^2$ and $Q$ a rational $2$-group. Let $C=\C_Q(a)$. Clearly $[Q:C]\le 2$ and $a^g=a^{-1}$ for every $g\in Q\setminus C$. 
Observe that the set of $2$-elements of $S$ is the disjoint union of  $C$ and  $Q_1=\{xq : q\in Q\setminus C, x\in \GEN{a}\}$. Moreover, if $g$ is a $2$-element of $S$, then 
$$a^g = \begin{cases} a, & \text{if } g \in C; \\a^{-1}, & \text{if } g\in Q_1.\end{cases}$$ 
Let $g\in C\cup Q_1$ and $v\in V_g(2)\setminus \{0\}$.
Let $W=v\mathbb{F}_5\GEN{a}$ and write $w_i=va^{i-1}=v^{a^{i-1}}$ for $i=0,...,6$. By the second paragraph of the proof, $B=\{w_i\}_{i\geq 1}^6$ is a basis of the vector space $W$. 
Let 
$$A=
\begin{pmatrix}
	2&0&0&0&0&0\\
	-2&-2&-2&-2&-2&-2\\
	0&0&0&0&0&2\\
	0&0&0&0&2&0\\
	0&0&0&2&0&0\\
	0&0&2&0&0&0 \end{pmatrix}.$$ 
We claim that the row matrix representation $\rho_v(g)$ of the action of $g$ on $W$ in the basis $B$ is as follows:
\[\rho_v(g)=\begin{cases} 2I, & \text{if } g\in C; \\ A, & \text{if } g\in Q_1.\end{cases}\]
This is straightforward if $g\in C$ and a consequence of the following calculation if $g\in Q_1$:
$$\begin{aligned}w_1g&=2w_1,\\
	w_0g&=w_1a^{-1}g=w_1ga=2w_1a=2w_2,\\
	w_6g&=w_0a^{-1}g=w_0ga=2w_2a=2w_3,\\
	w_5g&=w_6a^{-1}g=w_6ga=2w_3a=2w_4,\\
	w_4g&=w_5a^{-1}g=w_5ga=2w_4a=2w_5,\\
	w_3g&=w_4a^{-1}g=w_4ga=2w_5a=2w_6,\\
	w_2g&=w_3a^{-1}g=w_3ga=2w_6a=2w_0=-2(w_1+\dots+w_6).\\
\end{aligned}$$

\medskip
\textbf{Claim}: $V_C$ has the eigenvector property.


Let  $v\in V\setminus \{0\}$ and let $W$ and $B$ as above. As $V_S$ has the eigenvector property, there is $g\in C\cup Q_1$ such that $v\in V_g(2)$. If $g\in C$, then we are done. So suppose that $g\in Q_1$. Then $\rho_v(g)=A$. 

Now let $u=w_0+2w_1\in W$ and write $u_i=ua^{i-1}=u^{a^{i-1}}$ for $i=0,...,6$. As before $B'=\{u_i\}_{i\geq 1}^6$ is another basis of $W$. 
Again by the eigenvector property of $V_S$, there exists $h\in C\cup Q_1$ such that $uh=2u$. 
If $h\in C$, then $\rho_u(h)=2I$ and hence the action of $h$ on $W$ is given by multiplication by $2$, in particular $vh=2v$, and we are done again. So, suppose that $h\in Q_1$. Then $\rho_u(h)=A$. 
Let 	
$$U=\begin{pmatrix}
	1&-1&-1&-1&-1&-1\\
	1&2&0&0&0&0\\
	0&1&2&0&0&0\\
	0&0&1&2&0&0\\
	0&0&0&1&2&0\\
	0&0&0&0&1&2 \end{pmatrix}.$$
A straightforward computation shows that the rows of $U$ are the coefficients of the elements of $B'$ expressed in the basis $B$. Hence, by elementary linear algebra, conjugating by $U$ transforms the matrix representing the action of $g$ in the basis $B$ into the matrix $A'$ representing this action in the basis $B'$, i.e. $A'=U^{-1}AU$. 
Thus $\GEN{A,A'}\cong \GEN{h,g}$ but  $|\GEN{A,A'}|=2^2\cdot 3^2\cdot 7$ and hence $|S|$ is multiple of $9$, a contradiction. 
This finishes the proof of the Claim.
\medskip

Therefore $V_Q$ also has the eigenvector property and hence $V\rtimes Q$ satisfies the hypothesis of \Cref{VS2} (with $Q$ playing the role of the $2$-group $G$). 
So $dim_{\mathbb{F}_5}V=2n$, for some positive integer $n$, $Q\cong Q_8 \wr K$ for some $2$-subgroup $K$ of $\Sigma_n$ and $V\rtimes Q\cong \MM\wr K$.

We can choose a basis $\{v_1,v_2,v_3,v_4,\dots,v_{2n-1},v_{2n}\}$ of $V$ such that the representation of some $g\in Q\cong Q_8^n\rtimes K$ in this basis has the form 
$$\begin{pmatrix}2&0\\0&3\end{pmatrix}\oplus I_{2n-2}.$$ 
Note that $V_g(2)$ has dimension $1$. 

Put $W=v_1\mathbb{F}_5\GEN{a}$ and $w_i=v_1a^{i-1}$ for $i=0,\dots,6$. Recall that $B=\{w_i\}_{i=1}^6$ is a basis of $W$ and the matrix representation of the restriction of $g$ to $W$ in this basis is $A$ if
$a^g=a^{-1}$ and $2I_6$ if $g\in C$. In either case, this is a contradiction with the previous paragraph because the eigenspace of eigenvalue $2$ has dimension 3 for $A$, and dimension 6 for $2I_6$.
\end{proof}

Let $X$ be the subgroup of $S$ generated by the elements of order $7$ in $S$. By \Cref{GSylowp}, $X=X'\rtimes \GEN{a}$ for an element $a$ of order $7$ and, by \Cref{C7notnormal}, $X'$ is a non-trivial normal $\{2,3\}$-subgroup of $S$. 

\begin{fact}\label{facttX27}
$X'$ is a $2$-group and $F(X)=X'\ne 1$.
\end{fact}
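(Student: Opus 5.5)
We need to show that $X'$ is a $2$-group; then $F(X)=X'$ follows from $X=X'\rtimes\GEN{a}$ with $X'$ nilpotent. Indeed, $F(X)\supseteq X'$, and $a\notin F(X)$ because otherwise $\GEN{a}$ would be normal in $X$, hence $X=X'\times\GEN{a}$ and $X'=1$, contradicting \Cref{C7notnormal}. Note also that $X'$ is nilpotent once it is a $2$-group, and $X'\ne 1$ by \Cref{C7notnormal}.

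The plan mirrors \Cref{X35} in Section~\ref{Section-s}, with the roles of $2$ and $3$ exchanged. Suppose by contradiction that $3$ divides $|X'|$. Let $\mathcal S_3$ be the set of Sylow $3$-subgroups of $X'$. Then $|\mathcal S_3|$ divides $|X'|$ and is prime to $3$, so it is a power of $2$, and $\GEN{a}$ acts on $\mathcal S_3$ by conjugation. This does not immediately give a fixed point, since $2^k$ need not be $\equiv 1\bmod 7$. Instead, use that $X'$ is solvable and $a$-invariant, so by Sylow theory for coprime action (Glauberman/Hall) there is an $a$-invariant Sylow $3$-subgroup $L$ of $X'$. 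Then $a$ acts on $L$ without fixed points, because $3-7\notin\GK(G)$. Put $L_0=\Omega(Z(L))$, a nontrivial elementary abelian $3$-group on which $a$ acts fixed-point-freely. Hence $L_0$ is a sum of simple nontrivial $\F_3\GEN{a}$-modules, each of dimension $6$, the order of $3$ mod $7$. Choose $e$ generating one of them. Then $\GEN{e,a}\cong C_3^6\rtimes_{\text{Fr}} C_7$ is a Frobenius group.

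Since $S$ acts faithfully on $V$, which now is an $\F_5S$-module with $5\nmid|\GEN{e,a}|$, so is $\GEN{e,a}$. By \Cref{FrobeniusFaithful}, $a$ fixes a nonzero vector of $V$. This gives an element of order $5\cdot7$ in $G$, contradicting \eqref{Graph-t}. Thus $X'$ is a $2$-group.

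The main obstacle is the existence of an $a$-invariant Sylow $3$-subgroup of $X'$. The counting argument used in \Cref{X35} relied on $|\mathcal S_2|$ being a power of $3$ against an action of order $5$, which can have fixed-point-free orbits there too. In general one should invoke the standard coprime-action fact: if a $p'$-group $A$ acts on a solvable group, then it normalizes some Sylow $q$-subgroup, for each prime $q$. That step is where I would be careful. An alternative that avoids it is to work with $O_3(X')$ if it is nontrivial, or else pass to a chief factor of $X$ inside $X'$ that is a $3$-group; it is elementary abelian and $a$-invariant. One then needs the fixed-point-freeness of $a$ on that section, which follows from the coprime action (fixed points lift from sections).
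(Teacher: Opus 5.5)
Your proof is correct and follows the paper's proof: an $\GEN{a}$-invariant Sylow $3$-subgroup $L$ of $X'$, then $L_0=\Omega(Z(L))$, a $6$-dimensional simple $\F_3\GEN{a}$-submodule giving $C_3^6\rtimes_{\text{Fr}} C_7$, and \Cref{FrobeniusFaithful} to produce an element of order $5\cdot 7$. The one difference is how you get the invariant Sylow subgroup. There your worry is misplaced: the orbits of $\GEN{a}$ on $\mathcal S_3$ have length $1$ or $7$ and $7\nmid|\mathcal S_3|=2^k$, so some orbit is a singleton. What is needed is $2^k\not\equiv 0 \bmod 7$, not $2^k\equiv 1\bmod 7$. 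This is exactly the paper's counting argument, it is equally valid in Section~\ref{Section-s} (where $5\nmid 3^k$), and so the coprime-action theorem you invoke, while correct, is unnecessary.
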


\begin{proof}
It suffices to show that $|X'|$ is not divisible by $3$. 
Suppose by contradiction that $3$ divides the order of $X'$. 
Let $\mathcal S_3$ be the set of Sylow $3$-subgroups of $X'$. Then $|\mathcal S_3|$ is a power of $2$. Moreover $\GEN{a}$ acts by conjugation on $\mathcal S_3$, and therefore it fixes at least one Sylow $3$-subgroup, say $L$.  Then $\GEN{a}$ acts by conjugation  without fixed points on $L$ and on $L_0=\Omega(\Z(L))$. 
So we can see the latter as an $\F_3 \GEN{a}$-module, which is actually homogeneous, because $\F_3\GEN{a}$ has a unique simple module, up to isomorphism, on which $\GEN{a}$ acts fixed-point-freely. Moreover, this simple module has dimension $6$. 
Therefore, if $e$ is a non-trivial element of one of the simple submodules $T$ of $L_0$, then $T=\GEN{e,a}\cong C_3^6\rtimes_{\text{Fr}} C_7$. 
As the action of $S$ on $V$ is faithful, so is the action of $T$ on $V$. Then \Cref{FrobeniusFaithful} yields that $a$ does not act fixed-point-freely on $V$, in contradiction with $G$ having no elements of order $5\cdot 7$.	
\end{proof} 

Therefore, $X'$ is a non-trivial normal $2$-subgroup of $S$ such that $X/X'$ is a normal Sylow subgroup of $S/X'$ of order $7$ and $\GK(S/X')\subseteq (3-2-7)$. Then, by \Cref{rationalS2}, 
\begin{equation}\label{SX'}
	S/X'\cong (C_7\rtimes (C_3\times Q)),	
\end{equation}
with $Q$ a $2$ group. Thus $F(S)$ is a $2$-group.

Let $H$ be a minimal subgroup of $S$ such that, as $S$-module, $V$ is induced from an $H$-module $W$, and let $K$ be the kernel of the action of $H$ on $W$.  
We use the standard bar notation for the projection of $H$ onto $H/K$ and $\widetilde \cdot$ for the projection of $\overline H$ onto $\overline H/\Phi(F(\overline H))$. To avoid cumbersome notation $\widetilde{\overline h}$, we abbreviate this as $\widetilde h$.

By \Cref{HAbelCic}, every abelian normal subgroup of $\overline{H}$ is cyclic and so, $\overline{H}$ contains normal subgroups $F, D,E,U$ and $Z$ satisfying the conditions of   \Cref{StructureFED}.
Note that $X\subseteq H$ by \Cref{InducedFPF}. As $V_S$ is faithful, $\Core_S(K)=1$ so since $X'$ is normal in $S$, $\overline{X}'\ne 1$ and therefore, $|F|$ is not multiple of $7$. Moreover, as $G$ does not contain elements of order $3\cdot 7$, it follows that $|Z|$ is not multiple of $3$ and the same follows for $|F|$. So $F$ is a $2$-group.
Observe that $Z=\Omega(Z(F))$ has order $2$. Hence $Z$ is contained in every non-trivial normal subgroup of $F$. 
As $Z\subseteq U$, $F=ED$, $(E,D)=1$ and $E/Z$ and $D/U$ are elementary abelian, it follows that $F/U$ is elementary abelian, hence $\Phi(F)\subseteq U$, and so $\Phi(F)$  is cyclic. 
As $\overline X$ is not abelian and $X'$ is a $2$ group, $\overline X'$ is not cyclic and hence it is not abelian. Thus $\overline X''\ne 1$ and hence $Z\subseteq \overline X''\subseteq \Phi(F)$. 
	
So $\overline X'$ is not abelian, while $\Phi(F)$ is cyclic and so $\widetilde X'\ne 1$. 
Consider $\widetilde F$ as an $\F_2 \widetilde{H}$-module. 
Now,    $\widetilde X$ is once again a group generated by elements of order $7$ satisfying the hypothesis of  \Cref{GSylowp},  so $\widetilde{a}$ acts by conjugation without fixed points on   $\widetilde X'$, and hence every irreducible constituent of $\widetilde X'$ as $\F_2\GEN{\widetilde a}$-module has dimension $3$ and thus $\dim \widetilde X' \ge 3$. 
On the other hand $F=ED$ and $E\cap D=Z\subseteq \Phi(F)$ and therefore 
\[\widetilde{F} = \widetilde E \oplus \widetilde D.\]
Observe that  $\dim \widetilde D \le 2$, as $D$ is generated by at most $2$ elements.  
Thus 
	\begin{eqnarray*}
	\dim(\widetilde X'\cap \widetilde E) &=& \dim(\widetilde X'\cap \widetilde E) + \dim(\widetilde F) - (\dim(\widetilde E)+\dim(\widetilde D))  \\
	&\ge & \dim(\widetilde X'\cap \widetilde E) + \dim(\widetilde X'+\widetilde E) - (\dim(\widetilde E)+\dim(\widetilde D)) = \dim (\widetilde X')-\dim (\widetilde D) \ge 1.
	\end{eqnarray*}
Thus there is an element $x\in X'$ such that $\overline x=eu$ for some $e\in E\setminus \Phi(F)$ and some $u\in \Phi(F)$.  
Again, by \Cref{GSylowp}, $a\overline X''$ acts without fixed points on $\overline X'/\overline X''$ and therefore $(\overline{ax})^7\in \overline X''$. Moreover, as $\Phi(F)\subseteq U$ and $U$ is a normal cyclic $2$-subgroup of $\overline{H}$, we deduce that $\overline a$ commutes with $u$. Thus 
\[\overline x^{a^6} \overline x^{a^5} \dots   \overline x^a \overline x =  e^{a^6} e^{a^5}\dots e^a  e u^7=(\overline a \overline x)^7\in \overline X''.\]
Since $\overline X''\subseteq \Phi(F)\subseteq U$, we conclude that $(\overline ae)^7=e^{a^6}\dots e^a e \in U\cap E \subseteq D\cap E=Z$. 
So $|\overline a e|\in \{7,14\}$. If $|\overline a e|=7$, then $e\in \overline X'$, by \Cref{GSylowp}. Suppose that $|\overline a e|=14$. Then $(\overline a e)^2 = \overline a^2 e^{a}e $ has order $7$, so   $e^{\overline a} e\in \overline X' \cap E$. 
We claim that $e^{\overline a}e\not\in\Phi(F)$. 
Suppose otherwise. 
Then $e^{\overline a}e\in E\cap \Phi(F)=Z$. As $e^2\in E\cap \Phi(F)=Z$, it follows that $e^{\overline a}\in eZ$, so that  $e^{\overline{a}}=e$ or $e^{\overline{a}}=ez$ for $Z=\GEN{z}$. 
In the latter case, $e=ez^2=e^{\overline a^2}$ and as $\overline a$ has order $7$, we deduce that actually the first case holds. But then $e^7=(\overline a e)^7 \in Z\subseteq \Phi(F)$, against the election of $e$. 
So, in both cases  $\overline X'\cap E\not\subseteq \Phi(F)$ and in particular $\overline X'\cap E\not\subseteq \overline X''$.

Then $M=(\overline X'\cap E)\overline X''/\overline X''$ is a nontrivial normal subgroup of $\overline X/\overline X''=(\overline X'/\overline X'')\rtimes \GEN{\overline a \overline X''}$. Once more the latter satisfies the conditions of \Cref{GSylowp}, so that $a$ acts without fixed points on $\overline X'/\overline X''$ and hence so does on $M$. 
As $Z\subseteq \overline{X}''$ and $E/Z$ has exponent $2$, also $M$ has exponent $2$.
Thus we can see $M$ as an $\F_2C_7$-module and we fix a simple submodule $N$ of $M$. Then $\GEN{N,\overline a \overline X''}=\GEN{e \overline X'',\overline a\overline X''}$ for some $e\in \overline X'\cap E$, and $\GEN{e \overline X'',\overline a\overline X''}$ is a Frobenius group of the form $C_2^3\rtimes_{\text{Fr}} C_7$, contained in $(\overline X'\cap E)\GEN{\overline a}\overline X''/\overline X''$. The latter is naturally isomorphic to $(\overline X'\cap E)\GEN{\overline a}/(\overline X'\cap E)\GEN{\overline a})\cap \overline X''$.
	Moreover,  $Z\subseteq E\cap \overline X''  \subseteq E\cap \Phi(F) =Z$ and hence $(\overline X'\cap E)\GEN{\overline a})\cap \overline X''=E\cap \overline X''=Z$. Thus $\GEN{eZ,\overline aZ}\cong \GEN{e \overline X'',\overline a\overline X''}\cong C_2^3\rtimes_{\text{Fr}} C_7$.

As $|Z|=2$, $\GEN{e,\overline a, Z}\cong R\rtimes C_7$, where $R$ is  a group of order $2^4$ fitting a short exact sequence
	$$1\to C_2\to R\to C_2^3\to 1. $$   
If the sequence does not split, then $R$ is one of the following groups: $D_8\times C_2$, $Q_8\times C_2$, $(C_4\times C_2) \rtimes C_2$, $C_2^2\times C_4$. None of these groups has an automorphism of order $7$, hence $\GEN{e,\overline a,Z}\cong R\times C_7$, in contradiction with 
$\GEN{e,\overline a, Z} /Z\cong \GEN{eZ,\overline aZ}\cong C_2^3\rtimes _{\text{Fr}} C_7$. 
Thus the sequence splits, hence $R\cong C_2^4$, and, as $\overline a$ and the elements of $Z$ commute,   $\GEN{e,\overline a}\cong C_2^3\rtimes_{\text{Fr}} C_7$. Then we look at $W$ as an $\F_5\GEN{e, \overline a}$-module. 
As the action of $\overline H$ on $W$ is faithful, so is the action of $\GEN{e,\overline a}$. Then, by \Cref{FrobeniusFaithful}, $\GEN{\overline{a}}$ does not act fixed-point-freely on $W$, in contradiction with $G$ not having elements of order $5\cdot7$.

\medskip

\textbf{Acknowledgment}: \Cref{FabelianSylow}, \Cref{G2C2ornoelements21}  and \Cref{F7elementaryabelian} in \Cref{Section-s} were discovered during the preparation of \cite{BKMdR}. We thank Ann Kiefer, Andreas Bächle and Sugandha Maheshwary for allowing us to use them here.

\bibliographystyle{amsplain}
\bibliography{References}

	\end{document}